\definecolor {processblue}{cmyk}{0.96,0,0,0}
  \newtheorem{The}{Theorem}[section]
  \newtheorem{Pro}[The]{Proposition}
  \newtheorem{Lem}[The]{Lemma}
  \newtheorem{Cor}[The]{Corollary}
  \newtheorem{Examp}[The]{Example}
\newcommand{\bsm}{\begin{smallmatrix}}
\newcommand{\esm}{\end{smallmatrix}}
\newcommand{\bbm}{\begin{matrix}}
\newcommand{\ebm}{\end{matrix}}
\newcommand{\Hom}{\rm{Hom}}
\theoremstyle{definition}
\theoremstyle{plain}
\theoremstyle{definition}
\numberwithin{equation}{section}
\begin{document}

\title[Locally finitely presented Grothendieck categories]{Locally finitely presented Grothendieck categories and the pure semisimplicity conjecture}
\newcommand\shortTitle{Locally finitely presented Grothendieck categories}
\author{Ziba Fazelpour}
\address{School of Mathematics, Institute for Research in Fundamental Sciences (IPM), P.O. Box: 19395-5746, Tehran, Iran}
\email{z.fazelpour@ipm.ir}
\author{Alireza Nasr-Isfahani}
\address{Department of Pure Mathematics\\
Faculty of Mathematics and Statistics\\
University of Isfahan\\
P.O. Box: 81746-73441, Isfahan, Iran\\ and School of Mathematics, Institute for Research in Fundamental Sciences (IPM), P.O. Box: 19395-5746, Tehran, Iran}
\email{nasr$_{-}$a@sci.ui.ac.ir / nasr@ipm.ir}

\subjclass[2000]{{18A25}, {18E10}, {16D70}, {16G60}}

\keywords{Locally finitely presented category, Grothendieck category, Pure semisimple category}

\maketitle
{\centering\footnotesize Dedicated to the memory of Daniel Simson\par}

\begin{abstract}
In this paper, we investigate locally finitely presented pure semisimple (hereditary) Grothendieck categories. We show that every locally finitely presented pure semisimple (resp., hereditary) Grothendieck category $\mathscr{A}$ is equivalent to the category of left modules over a left pure semisimple (resp., left hereditary) ring when ${\rm Mod}({\rm fp}(\mathscr{A}))$ is a QF-3 category and every representable functor in ${\rm Mod}({\rm fp}(\mathscr{A}))$ has finitely generated essential socle. In fact, we show that there exists a bijection between Morita equivalence classes of left pure semisimple (resp., left hereditary) rings $\Lambda$ and equivalence classes of locally finitely presented pure semisimple (resp., hereditary) Grothendieck categories $\mathscr{A}$ that ${\rm Mod}({\rm fp}(\mathscr{A}))$ is a QF-3 category and every representable functor in ${\rm Mod}({\rm fp}(\mathscr{A}))$ has finitely generated essential socle. To prove this result, we study left pure semisimple rings by using Auslander’s ideas. We show that there exists, up to equivalence, a bijection between the class of left pure semisimple rings and the class of rings with nice homological properties. These results extend the Auslander and Ringel-Tachikawa correspondence to the class of left pure semisimple rings. As a consequence, we give several equivalent statements to the pure semisimplicity
conjecture.
\end{abstract}

\section{Introduction}
A cocomplete abelian category $\mathscr{A}$ is called {\it Grothendieck} if direct limits are exact in $\mathscr{A}$ and $\mathscr{A}$ has a generator (see \cite{gro}). The category ${\rm Qcoh}(X)$ of quasi-coherent sheaves over any quasi-compact and quasi-separated algebraic scheme and the quotient category $\mathscr{G}/\mathscr{T}$, where $\mathscr{G}$ is a locally finitely presented Grothendieck category and $\mathscr{T}$ is a hereditary torsion class in $\mathscr{G}$ generated by finitely presented objects  are examples of locally finitely presented Grothendieck categories (see \cite[I.6.9.12]{gra} and \cite[Proposition 2.4]{es}).

A Grothendieck category is called {\it pure semisimple} if each of its objects is a coproduct of finitely presented objects. The category of representations of any quiver of the pure semisimple type which was studied by Drozdowski and Simson \cite{ds} is one of the important examples of locally finitely presented pure semisimple Grothendieck categories. The pure semisimplicity of the category Comod-$C$ of right $C$-comodules over a coalgebra $C$ studied by using the right Gabriel quiver $Q_C$ of $C$ in \cite[Sect. 7]{S77}, \cite[p. 404]{s82}, and \cite{ko, no, s01, s04}. Note that the category Comod-$C$ of right $C$-comodules over a coalgebra $C$ is also one of the examples of locally finitely presented Grothendieck categories.

In \cite{S80}, Simson showed that there exists a bijection between equivalence classes of pure semisimple (resp., hereditary) Grothendieck categories and equivalence classes of  pure semisimple (resp., hereditary) functor categories Mod$(\mathscr{C}^{\rm op})$, where $\mathscr{C}$ has pseudocokernels. In \cite{gs1}, Garcia and Simson studied locally finitely presented pure semisimple Grothendieck locally PI-categories. They showed that any locally finitely presented pure semisimple Grothendieck locally PI-category $\mathscr{A}$ is equivalent to the category of ${\rm Mod}\widetilde{\Lambda}$ of unitary right $\widetilde{\Lambda}$-modules over a right perfect ring  $\widetilde{\Lambda}$ (in general $\widetilde{\Lambda}$ has no identity) with a complete set of orthogonal primitive local idempotents when $\mathscr{A}$ is equivalent to the category of unitary modules over a ring with enough idempotents (see \cite[Sect. 3.3, Theorem 2]{gs1}). Moreover, they proved that any locally finitely presented pure semisimple Grothendieck locally PI-category $\mathscr{A}$ is equivalent to the category of unitary right modules over an artinian PI-ring of finite representation type $\Lambda$ when $\mathscr{A}$ has only finitely many non-isomorphic simple objects (see \cite[Sect. 3.3, Corollary 7]{gs1}). A left artinian ring $\Lambda$ is called {\it of finite representation type} if it has, up to isomorphism, only finitely many finitely generated indecomposable left $\Lambda$-modules. Note that there exist locally finitely presented pure semisimple Grothendieck categories which are not of finite representation type (see \cite{ds}). Therefore, the class of locally finitely presented pure semisimple Grothendieck categories is bigger than the class of the module categories over pure semisimple rings. 

A ring $\Lambda$ is called {\it left pure semisimple} if the category $\Lambda$-Mod is pure semisimple. Equivalently, every left $\Lambda$-module is a direct sum of finitely generated left $\Lambda$-modules. Fuller and Reiten \cite{fu, fr} proved that a ring $\Lambda$ is left and right pure semisimple if and only if $\Lambda$ is of finite representation type. The problem of whether left pure semisimple rings are of finite representation type, known as the pure semisimplicity conjecture, remains open (see \cite{au, s77, s80, z}). Auslander in \cite{au} showed that the pure semisimplicity conjecture is valid for Artin algebras. Also, Herzog in \cite{her} showed that if there exists a counterexample to the pure semisimplicity conjecture,
then there exists a counterexample $R$ which is hereditary.\\

In 1997, Garcia and Simson \cite{gs}, studied locally finitely presented Grothendieck categories by using their (Gabriel) functor rings. They showed that there exists, up to equivalence, a bijection between the class of all locally finitely presented Grothendieck categories and the class of all left panoramic rings with enough idempotents. In this bijection, the category corresponding to a given ring $R$ is the category of all flat unitary left $R$-modules. Also, the ring corresponding to a given category $\mathscr{A}$ is  the functor ring $T_{\mathscr{A}}$ of $\mathscr{A}$. The functor rings are an important tool in the study of locally finitely presented pure semisimple Grothendieck categories. As an example, in \cite{S77}, it was shown that a locally finitely presented Grothendieck category $\mathscr{A}$ is pure semisimple if and only if $T_{\mathscr{A}}$ is a left perfect ring. In this paper, we study locally finitely presented pure semisimple (hereditary) Grothendieck categories by using this technique. We show that any locally finitely presented pure semisimple (resp., hereditary) Grothendieck category which satisfies the following property
\begin{itemize}
\item[$(\ast)$] ${\rm Mod}({\rm fp}(\mathscr{A}))$ is a QF-3 category and every representable functor in ${\rm Mod}({\rm fp}(\mathscr{A}))$ has finitely generated essential socle.
\end{itemize}
is equivalent to the category of left modules over a left pure semisimple (resp., left hereditary) ring. Then we provide a bijection between these two class of categories up to equivalence. To obtain this result, we first study left pure semisimple rings by using Auslander’s ideas. Auslander in his famous theorem \cite{Ausla2}, which is called Auslander correspondence, provided a bijection between Morita equivalence classes of rings of finite representation type and Morita equivalence classes of Auslander rings. A left artinian ring $R$ is called {\it Auslander ring} if the global dimension of $R$ is less than or equal 2 and the dominant dimension of $R$ is greater than or equal 2. This result plays a crucial role in representation theory of artin algebras. Ringel and Tachikawa in \cite{rt} by using QF-3 maximal quotient rings gave a short proof for the Auslander's result.
It is natural to ask about similar bijection for the class of left pure semisimple rings. In this paper we extend the Auslander-Ringel-Tachikawa correspondence to the class of left pure semisimple rings. Note that the functor rings that appeared in the proof of the Auslander correspondence are unital, but in our case the functor rings are not unital (these are rings with enough idempotents). 

Consider the following properties for a ring $R$ with enough idempotents.
\begin{itemize}
\item[$(i)$] There is a complete set $\lbrace e_{\alpha}~|~\alpha \in J \rbrace$ of pairwise orthogonal local idempotents of $R$ such that $R=\bigoplus_{\alpha \in J}Re_{\alpha}=\bigoplus_{\alpha \in J}e_{\alpha}R$ and each $e_{\alpha}Re_{\alpha}$ is a right perfect ring;
\item[$(ii)$] $R$ is a left locally noetherian ring with ${\rm l.gl.dim}R = 0 ~{\rm or} ~ 2$;
\item[$(iii)$] ${\rm proj}(R)\bigcap {\rm inj}(R)= {\rm add}(\bigoplus_{l=1}^n E(S_l))$, where $\lbrace S_1,\cdots,S_n\rbrace$ is a set of non-isomorphic simple unitary left $R$-modules;
\item[$(iv)$] For each $\alpha \in J$, {\rm dom.dim} $Re_{\alpha} \geq 2$.\\
\end{itemize}

As a first main result of this paper, we prove the following theorem.\\

\textbf{Theorem A.} (Theorem \ref{re1}) {\it There exists a bijection between Morita equivalence classes of left pure semisimple rings $\Lambda$ and Morita equivalence classes of rings $R$ with the properties $(i)$-$(iv)$.}\\

It is easy to see that any Auslander ring has the properties $(i)$-$(iv)$. It is natural to ask about the converse of this fact. We show that the converse of this fact is equivalent to the pure semisimplicity conjecture (see Corollary \ref{re8}).

Then, we prove the following result which extends the Ringel-Tachikawa correspondence to the class of left pure semisimple rings.\\

\textbf{Theorem B.} (Theorem \ref{rt1}) {\it There exists a bijection between Morita equivalence classes of left pure semisimple rings $\Lambda$ and Morita equivalence classes of left perfect right locally coherent QF-3 rings $R$ with enough idempotents which have a minimal faithful balanced right ideal and ${\rm l.gl.dim}R = 0 ~{\rm or} ~ 2$.}\\

We show that there exists, up to equivalence, a bijection between the class of all locally finitely presented pure semisimple Grothendieck categories which satisfy the property $(\ast)$ and the class of rings with the properties $(i)$-$(iv)$ (see Theorem \ref{r53}). In this bijection, the category corresponding to a given ring $R$ is the category of all flat unitary left $R$-modules. Also, the ring corresponding to a given category $\mathscr{A}$ is the functor ring $R_{\mathscr{A}}$ of $\mathscr{A}$. Next, by using Theorem A, we prove the following result.\\

\textbf{Theorem C.} (Corollary \ref{r54}) {\it There exists a bijection between Morita equivalence classes of left pure semisimple (resp., left hereditary) rings $\Lambda$ and equivalence classes of locally finitely presented pure semisimple (resp., hereditary) Grothendieck categories which satisfy the property $(\ast)$.}\\

As a consequence of above theorem and \cite[Theorem 6.9]{her}, we have the following corollary which gives an equivalent statement to the pure semisimplicity
conjecture.\\

\textbf{Corollary A.} (Corollary \ref{p22}) {\it The following statement are equivalent.
\begin{itemize}
\item[$(a)$] The pure semisimplicity conjecture is true;
\item[$(b)$] Every locally finitely presented pure semisimple hereditary Grothendieck category which satisfies the property $(\ast)$ is of finite representation type.\\
\end{itemize}}

We hope these results create a new path to prove or disprove of the pure semisimplicity conjecture.\\

The paper is organized as follows. In Section 2, we prove some preliminary results that will be needed throughout the paper. In Sections 3 and 4, we extend and generalize the Auslander correspondence and Ringel-Tachikawa correspondence to the class of left pure semisimple rings and prove Theorems A and B, respectively. In Section 5, we study locally finitely presented pure semisimple hereditary Grothendieck categories which satisfy the property $(\ast)$ and prove Theorem C.

\subsection{Notation }
In this paper, we write morphisms in a category $\mathscr{C}$ on the right side of the objects. Let $\mathscr{C}$ be an additive category.  We denote by Mod$(\mathscr{C})$ the category of all contravariant additive functors from $\mathscr{C}$ to the category $\mathfrak{Ab}$ of all abelian groups. We also denote by ${\rm Flat}(\mathscr{C})$ (resp., ${\rm proj}(\mathscr{C})$) the full subcategory of Mod$(\mathscr{C})$ whose objects are flat (resp., finitely generated projective) functors; recall that a functor $F$ in Mod$(\mathscr{C})$ is called flat if $F$ is a direct limit of finitely generated projective functors. Let $F: \mathscr{C} \rightarrow \mathscr{C'}$ be an additive functor and let $\mathscr{D}$ be a full subcategory of $\mathscr{C}$. We denote by $F|_{\mathscr{D}}$ the restriction of $F$ to $\mathscr{D}$. Throughout this paper all rings are associative, not necessary with unit. Let $R$ be a ring. We denote by $R$-Mod (resp., Mod-$R$) the category of all left (resp., right) $R$-modules. Also we denote by $R$-mod (resp., mod-$R$) the category of all finitely generated left (resp., right) $R$-modules and by $J(R)$ the Jacobson radical of $R$. A left $R$-module $M$ is called {\it unitary} if $RM=M$. We denote by $R$Mod (resp., Mod$R$) the category of all unitary left (resp., right) $R$-modules. We write $\prod_A M$ for the direct product of cardinal $A$ copies of an $R$-module $M$. Moreover, we write $\widehat{\prod}_B M=R \prod_B M$ for the direct product of cardinal $B$ copies of a unitary left $R$-module $M$ in $R$Mod. Let $N$ be a unitary left $R$-module. We denote by $E(N)$, ${\rm rad}(N)$, ${\rm top}(N)$ and ${\rm Soc}(N)$ the injective hull of $N$ in $R$Mod, radical of $N$, top of $N$ and socle of $N$, respectively. For a left $R$-module $V$, we denoted by ${\rm Add}(V)$ (resp., ${\rm add}(V)$) the full subcategory of $R$Mod whose objects are all left $R$-modules that are isomorphic to direct summands of direct sum (resp., finite direct sum) of copies of $V$. We denote by ${\rm Proj}(R)$ (resp., ${\rm proj}(R)$) the full subcategory of $R$Mod consisting of (resp., finitely generated) unitary left $R$-modules which are projective in $R$Mod. Also we denote by ${\rm inj}R$ the full subcategory of $R$Mod consisting of finitely generated unitary left $R$-modules which are injective in $R$Mod. Throughout this paper, we assume that $\Lambda$ is a ring with identity.

\section{Preliminaries}

A ring $R$ is called a {\it ring with local units} if every finite subset of $R$ contained in a subring of the form $eRe$ where $e^2 = e \in R$ (see \cite[p. 464]{wi}). Throughout this section we assume that all rings are associative with local units.\\

Let $(I, \leq)$ be a quasi-ordered directed set. A direct system of unitary left $R$-modules\\ ${(X_i, \varphi_{ij})_{i,j \in I}}$ is called {\it split} if for each $i \leq j$ in $I$, there exists an $R$-module homomorphism $\psi_{ji}: X_j \rightarrow X_i$ such that $\varphi_{ij}\psi_{ji}=id_{X_i}$ and $\psi_{kj}\psi_{ji}=\psi_{ki}$ for each $i \leq j \leq k$ in $I$ (see \cite[p. 6]{zn}). A unitary left $R$-module $P$ is called {\it locally projective} if $P$ is a direct limit of a split direct system $(P_i)_{i \in I}$, where each $P_i$ is a finitely generated  projective unitary left $R$-module (see \cite[p. 6]{zn}).\\

Let $R$ and $S$ be two rings and $P$ be an $R$-$S$-bimodule.  Assume that $P$ is a locally projective unitary left $R$-module such that $Pf$ is a finitely generated left $R$-module for each idempotent $f \in S$. By \cite[Lemma 2.5]{zn}, the functor $R{\rm Hom}_S(G_P,-): S{\rm Mod}\rightarrow R{\rm Mod}$ is a right adjoint of the functor $S{\rm Hom}_R(P,-): R{\rm Mod} \rightarrow S{\rm Mod}$, where $G_P=S{\rm Hom}_R(P,R)$. For each unitary left $R$-module $M$, we have an $R$-module homomorphism  $\gamma_M: M \rightarrow R{\rm Hom}_S(G_P,S{\rm Hom}_R(P,M))$ defined by $(\beta)(m)\gamma_M=(\beta \otimes m)\eta_M:P \rightarrow M$ via $x \mapsto (x)\beta m$ for each $m \in M$ and $\beta \in G_P$. Therefor we have a natural transformation $\gamma: id_{R{\rm Mod}} \rightarrow R{\rm Hom}_S(G_P, S{\rm Hom}_R(P,-))$, where $\gamma=(\gamma_M)$ (see \cite[Proposition 45.5]{wi}).\\

In the following proposition and corollary, we assume that $P$ is a locally projective left $R$-module, $S$ is a subring of ${\rm End}_R(P)$ such that $P$ is a unitary right $S$-module, $S{\rm End}_R(P)=S$ and $Pf$ is a finitely generated left $R$-module for each $f^2=f \in S$. Note that the following proposition and corollary are non-unital analogs of \cite[Propositions 5.5 and 5.6]{Ausla1}.

\begin{Pro}\label{b2}
The following conditions are equivalent for any unitary left $R$-module $N$.
\begin{itemize}
\item[$(a)$] The canonical morphism $\gamma_N: N \rightarrow R{\rm Hom}_S(G_P,S{\rm Hom}_R(P,N))$ is an isomorphism.
\item[$(b)$] There exists a unitary left $S$-module $N'$ such that $R{\rm Hom}_S(G_P,N')\cong N$ as $R$-modules.
\item[$(c)$] There exists an exact sequence of $R$-modules $$0\rightarrow N \rightarrow R{\rm Hom}_S(G_P,I_0) \rightarrow R{\rm Hom}_S(G_P,I_1),$$ where $I_0$ and $I_1$ are injective in $S${\rm Mod}.
\item[$(d)$] For each $M \in R{\rm Mod}$, the functor $S{\rm Hom}_R(P,-)$ gives an isomorphism of abelian groups $${\rm Hom}_R(M,N) \rightarrow {\rm Hom}_S(S{\rm Hom}_R(P,M),S{\rm Hom}_R(P,N)).$$
\end{itemize}
\end{Pro}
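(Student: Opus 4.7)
The plan is to exploit the adjunction between $F := S{\rm Hom}_R(P,-) : R{\rm Mod} \to S{\rm Mod}$ and $G := R{\rm Hom}_S(G_P,-) : S{\rm Mod} \to R{\rm Mod}$ whose unit is the natural transformation $\gamma$ recalled before the statement; let $\delta$ denote its counit. I would prove the cycle $(a) \Rightarrow (b) \Rightarrow (c) \Rightarrow (a)$, and separately establish $(a) \Leftrightarrow (d)$ by a Yoneda argument.

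The easy implications are quick: for $(a) \Rightarrow (b)$ take $N' := FN$; for $(b) \Rightarrow (c)$, choose an injective copresentation $0 \to N' \to I_0 \to I_1$ in $S{\rm Mod}$, apply the left exact right adjoint $G$, and use $GN' \cong N$. For $(a) \Leftrightarrow (d)$, observe that the natural map in $(d)$ factors through the adjunction isomorphism as
$$
{\rm Hom}_R(M,N) \xrightarrow{{\rm Hom}(M,\gamma_N)} {\rm Hom}_R(M,GFN) \cong {\rm Hom}_S(FM,FN),
$$
so by Yoneda it is a bijection for every $M \in R{\rm Mod}$ if and only if $\gamma_N$ is an isomorphism.

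The main implication is $(c) \Rightarrow (a)$. Given an exact sequence $0 \to N \to GI_0 \to GI_1$ with $I_0, I_1$ injective in $S{\rm Mod}$, applying the left exact functor $GF$ yields the bottom row of the naturality diagram
$$
\xymatrix@C=1.2em{
0 \ar[r] & N \ar[r] \ar[d]_{\gamma_N} & GI_0 \ar[r] \ar[d]_{\gamma_{GI_0}} & GI_1 \ar[d]^{\gamma_{GI_1}} \\
0 \ar[r] & GFN \ar[r] & GFGI_0 \ar[r] & GFGI_1.
}
$$
A standard kernel-lemma (i.e.\ Five Lemma) argument then shows that $\gamma_N$ is an isomorphism provided the two right-hand vertical maps are. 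Thus the entire problem reduces to the auxiliary claim that $\gamma_{GI}$ is an isomorphism for every injective $I$ in $S{\rm Mod}$.

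I expect this auxiliary claim to be the principal obstacle. The triangle identity $G\delta \cdot \gamma G = {\rm id}_G$ shows that $\gamma_{GI}$ is always a split monomorphism with retraction $G\delta_I$, so the task reduces to promoting this split mono to an iso, equivalently, to showing that $\delta_I$ becomes an isomorphism after applying $G$. To verify this in the non-unital setting I would exploit the hypotheses that $P$ is locally projective, that $S{\rm End}_R(P) = S$, and that $Pf$ is finitely generated for each idempotent $f \in S$, adapting the calculation from the proof of \cite[Proposition 5.5]{Ausla1} to rings with local units. This last step is where the argument diverges most from the classical unital case and requires the most care, since one must replace the unit $1_S$ by a cofinal family of idempotents and track the direct-limit presentation of $P$.
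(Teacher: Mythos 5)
Your proposal is correct, and at the level of inputs it rests on exactly the same two facts the paper uses --- the adjunction between $F:=S{\rm Hom}_R(P,-)\cong G_P\otimes_R-$ and $G:=R{\rm Hom}_S(G_P,-)$, and the natural isomorphism $\nu:{\rm id}_{S{\rm Mod}}\to FG$ --- but it routes the equivalences differently. The paper runs $(a)\Rightarrow(b)\Rightarrow(d)\Rightarrow(a)$, carrying $(b)\Rightarrow(d)$ by a large explicit commutative diagram assembled from the isomorphisms $\psi_{X,Y}$, $\nu_Y$, $\eta_M$ of \cite[Lemmas 2.1, 2.5, 2.7]{zn} plus the five lemma, and then dispatches $(c)\Rightarrow(a)$ in one line by the same naturality-plus-five-lemma argument you spell out in detail. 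Your $(a)\Leftrightarrow(d)$, obtained by factoring the action of $F$ on ${\rm Hom}$-sets as ${\rm Hom}_R(M,\gamma_N)$ followed by the adjunction isomorphism and then invoking Yoneda (test against $M=N$ and $M=GFN$, both unitary), is cleaner and more conceptual than the paper's diagram chase, and buys a transparent proof that the map in $(d)$ is precisely the unit composed with the adjunction bijection. As for what you flag as the principal obstacle --- that $\gamma_{GI}$ is an isomorphism for injective $I$ --- this is not actually open in the paper's framework: \cite[Lemma 2.7]{zn}, quoted verbatim in the paper's proof, already supplies the natural isomorphism $\nu_Y:Y\to S{\rm Hom}_R(P,R{\rm Hom}_S(G_P,Y))$ for \emph{every} $Y\in S{\rm Mod}$ (injectivity of $I$ is irrelevant here), which is exactly the non-unital adaptation of Auslander's computation that you anticipate having to redo; combined with your triangle-identity observation it makes $\gamma_{GY}$ the two-sided inverse of $G\delta_Y$, i.e.\ it is the instance $(b)\Rightarrow(a)$ for $N=GY$. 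So your argument closes completely once that lemma is imported as a black box, just as the paper does.
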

\begin{proof}
$(a) \Rightarrow (b)$ is clear.\\
$(b) \Rightarrow (d)$. Consider the exact sequence $0\rightarrow N' \rightarrow I_0 \overset{t}{\rightarrow} I_1$, where $I_0$ and $I_1$ are injective modules in $S$Mod.  Then the sequence $0\rightarrow N \overset{h}{\rightarrow} R{\rm Hom}_S(G_P,I_0) \overset{g}{\rightarrow} R{\rm Hom}_S(G_P,I_1)$ is exact, where $g=R{\rm Hom}_S(G_P,t)$. This implies that the following sequence is also exact $$0\longrightarrow S{\rm Hom}_R(P,N) \overset{S{\rm Hom}_R(P,h)}{\longrightarrow} S{\rm Hom}_R(P,R{\rm Hom}_S(G_P,I_0)) \overset{S{\rm Hom}_R(P,g)}{\longrightarrow} S{\rm Hom}_R(P,R{\rm Hom}_S(G_P,I_1)).$$
We know that by \cite[Lemma 2.1]{zn}, the pair of functors
\begin{center}
$G_P\otimes_R-:R{\rm Mod} \rightarrow S{\rm Mod}$ \hspace{2mm} and \hspace{2mm} $R{\rm Hom}_S(G_P,-):S{\rm Mod} \rightarrow R{\rm Mod}$
\end{center}
is adjoint via the isomorphisms {\rm(}for $X \in R${\rm Mod} and $Y \in S${\rm Mod)}
\begin{center}
$\psi_{X,Y}:{\rm Hom}_R(X,R{\rm Hom}_S(G_P,Y)) \rightarrow {\rm Hom}_S(G_P\otimes_RX,Y)$ via $\beta \mapsto \left[ \sum_{i=1}^t\alpha_i\otimes x_i\mapsto \sum_{i=1}^t(\alpha_i)(x_i)\beta \right].$
\end{center}
Also by \cite[Lemma 2.7]{zn}, for each $Y \in S${\rm Mod} there exists an $S$-module isomorphism $$\nu_Y:Y \rightarrow S{\rm Hom}_R(P,R{\rm Hom}_S(G_P,Y))$$
which determines a natural isomorphism ${\rm id}_{S{\rm Mod}} \rightarrow S{\rm Hom}_R(P,R{\rm Hom}_S(G_P,-))$. Moreover by the proof of \cite[Lemma 2.5]{zn}, there exists an $S$-module isomorphism $$\eta_M: G_P \otimes_R M  \rightarrow S{\rm Hom}_R(P,M).$$ Set $I'_i:=S{\rm Hom}_R(P,R{\rm Hom}_S(G_P,I_i))$ and $\delta_i:= {\rm Hom}_R(M,R{\rm Hom}_S(G_P,\nu_{I_i}))$, for $i=0,1$ and let $M$ be a unitary left $R$-module. Therefore the following diagram is commutative
\begin{displaymath}
\xymatrix{
0 \ar[r] & {\rm Hom}_R(M,N)\ar@{->}^<<<<{{\rm Hom}_R(M,h)}[r]  &
{\rm Hom}_R(M,R{\rm Hom}_S(G_P,I_0)) \ar[r]^{{\rm Hom}_R(M,g)} \ar[d]^{\delta_0} &{\rm Hom}_R(M,R{\rm Hom}_S(G_P,I_1)) \ar[d]^{\delta_1} \\
&& {\rm Hom}_R(M,R{\rm Hom}_S(G_P,I'_0))  \ar[r]^{g'''} \ar[d]^{\Psi_{M,I'_0}} &{\rm Hom}_R(M,R{\rm Hom}_S(G_P,I'_1)) \ar[d]^{\Psi_{M,I'_1}}\\
&&  {\rm Hom}_S(G_P\otimes_R M,I'_0) \ar[r]^{{\rm Hom}_S(G_P\otimes_R M,g')} \ar[d]^{{\rm Hom}_S(\eta^{-1}_M,I'_0)} &{\rm Hom}_S(G_P\otimes_R M,I'_1)\ar[d]^{{\rm Hom}_S(\eta^{-1}_M,I'_1)}\\
0 \ar[r] &{\rm Hom}_S(M',N') \ar[r]_<<<<<<<{h''} & {\rm Hom}_S(M',I'_0) \ar[r]_<<<<<<<{g''} &{\rm Hom}_S(M',I'_1) }
\end{displaymath}
where $M'=S{\rm Hom}_R(P,M)$, $N'=S{\rm Hom}_R(P,N)$, $g'=S{\rm Hom}_R(P,g)$, $h'=S{\rm Hom}_R(P,h)$, $g'''={\rm Hom}_R(M,R{\rm Hom}_S(G_P,g'))$, $h''={\rm Hom}_S(S{\rm Hom}_R(P,M),h')$ and $g''={\rm Hom}_S(S{\rm Hom}_R(P,M),g')$. By using the proof of \cite[Lemmas 2.5 and 2.7]{zn} and the five lemma we can see that the functor $S{\rm Hom}_R(P,-)$ gives the following isomorphism $${\rm Hom}_R(M,N) \rightarrow {\rm Hom}_S(S{\rm Hom}_R(P,M),S{\rm Hom}_R(P,N)).$$
$(d) \Rightarrow (a).$ Put $M=R$ and then we can easily see this isomorphism induces asked isomorphism. \\
$(b) \Rightarrow (c)$ is clear.\\
$(c) \Rightarrow (a).$ It follows from $(b) \Leftrightarrow (a)$ and the five lemma.

\end{proof}

A unitary left $R$-module $M$ is called {\it injectively copresented over} $P$ if the canonical $R$-module homomorphism $\gamma_M: M \rightarrow R{\rm Hom}_S(G_P,S{\rm Hom}_R(P,M))$ is an isomorphism. We denote the full subcategory of $R$Mod consisting of all left $R$-modules which are injectively copresented over $P$ by $R{\rm Hom}_S(G_P, S{\rm Mod})$ (see \cite[p. 221]{Ausla1}). Notice that the category $R{\rm Hom}_S(G_P, S{\rm Mod})$ is closed under direct summands.\\

As a consequence of \cite[Lemma 2.7]{zn} and Proposition \ref{b2} we have the following corollary.

\begin{Cor}\label{b3}
The functor $S{\rm Hom}_R(P,-): R{\rm Hom}_S(G_P, S{\rm Mod}) \rightarrow S{\rm Mod}$ is an equivalence of categories.
\end{Cor}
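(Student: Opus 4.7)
The plan is to establish the equivalence by verifying the two defining conditions in turn: that $S{\rm Hom}_R(P,-)$ restricted to $R{\rm Hom}_S(G_P, S{\rm Mod})$ is fully faithful, and that it is essentially surjective onto $S{\rm Mod}$. Both parts are immediate consequences of Proposition \ref{b2} together with the adjunction machinery already recorded from \cite[Lemmas 2.5 and 2.7]{zn}.

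For fully faithful, let $M, N \in R{\rm Hom}_S(G_P, S{\rm Mod})$. Since $N$ satisfies condition $(a)$ of Proposition \ref{b2}, it also satisfies condition $(d)$, which gives, for every unitary left $R$-module $M$ (and in particular for $M$ in our subcategory), the natural isomorphism
\[
{\rm Hom}_R(M,N) \longrightarrow {\rm Hom}_S(S{\rm Hom}_R(P,M), S{\rm Hom}_R(P,N)).
\]
This is precisely the statement that $S{\rm Hom}_R(P,-)$ is fully faithful on the subcategory.

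For essential surjectivity, let $N'$ be any unitary left $S$-module and set $N := R{\rm Hom}_S(G_P, N')$. Then $N$ satisfies condition $(b)$ of Proposition \ref{b2}, so $N$ belongs to $R{\rm Hom}_S(G_P, S{\rm Mod})$. The natural isomorphism $\nu_{N'}: N' \to S{\rm Hom}_R(P, R{\rm Hom}_S(G_P, N'))$ furnished by \cite[Lemma 2.7]{zn} (and invoked already in the proof of Proposition \ref{b2}) then provides the required isomorphism $S{\rm Hom}_R(P, N) \cong N'$ in $S{\rm Mod}$.

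There is essentially no main obstacle: the substance of the corollary has been absorbed into Proposition \ref{b2}, which packages the equivalence between the intrinsic condition $(a)$ defining the subcategory, the construction of objects in the image of $R{\rm Hom}_S(G_P,-)$ in $(b)$, and the Hom-isomorphism in $(d)$. What remains is only to assemble these pieces and to observe that the unit $\nu$ from \cite[Lemma 2.7]{zn} provides the quasi-inverse $R{\rm Hom}_S(G_P,-) : S{\rm Mod} \to R{\rm Hom}_S(G_P, S{\rm Mod})$. This is the non-unital analogue of \cite[Proposition 5.6]{Ausla1}, and the proof follows the same formal pattern once the locally projective/unitary hypotheses on $P$ have been used to secure the adjunction in the non-unital setting.
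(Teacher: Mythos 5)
Your proof is correct and takes the same route the paper intends: the paper derives the corollary directly from \cite[Lemma 2.7]{zn} and Proposition \ref{b2}, and your argument simply unpacks this, using the equivalence $(a)\Leftrightarrow(d)$ for full faithfulness and $(b)\Rightarrow(a)$ together with the unit $\nu$ for essential surjectivity. No gaps.
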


A ring $R$ is called {\it semiperfect} if every finitely generated unitary left $R$-module has a projective cover in $R$Mod (see \cite[p. 334]{ha1}). Let $R$ and $S$ be two rings, $M$ be a unitary left $R$-module and $P$ be an $R$-$S$-bimodule. We denoted by ${\rm st}_P(M)$ the sum of all $R$-submodules $K$ of $M$ with $S{\rm Hom}_R(P,K)=0$ (see \cite[p. 5]{zn}). We need a non-unital version of Proposition 5.7(d) and Corollary 7.4(c) of \cite{Ausla1} for non-unital semiperfect (non-artinian) rings.

\begin{Lem}\label{c1}
Let $R$ be a semiperfect ring and let $\lbrace S_1,\cdots,S_n\rbrace$ be a finite set of non-isomorphic simple unitary left $R$-modules.  Assume that for each $i$, $f_i: P_i \rightarrow S_i$ is a projective cover of $S_i$ in $R${\rm Mod}, $P=\bigoplus_{i=1}^n P_i$ and $S={\rm End}_R(P)$. If $E$ is an injective module in $R${\rm Mod} such that ${\rm Soc}(E)$ is an essential submodule of $E$, then the following conditions are equivalent.
\begin{itemize}
\item[$(a)$] ${\rm Soc}(E) \in {\rm Add}(\bigoplus_{i=1}^nS_i)$.
\item[$(b)$] $st_P(E)=0$.
\item[$(c)$] $E \in R{\rm Hom}_S(G_P, S{\rm Mod})$.
\end{itemize}
\end{Lem}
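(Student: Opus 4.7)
The plan is to split the three-way equivalence into $(a)\Leftrightarrow(b)$, $(c)\Rightarrow(b)$, and $(b)\Rightarrow(c)$, with the last being the main obstacle. The equivalence $(a)\Leftrightarrow(b)$ follows formally from the essentiality of ${\rm Soc}(E)$ combined with the semiperfect structure of $R$: for $(a)\Rightarrow(b)$, any nonzero submodule $K\subseteq E$ meets ${\rm Soc}(E)$ in a simple submodule which by (a) must be some $S_i$, and composing the projective cover $f_i\colon P_i\twoheadrightarrow S_i$ with the inclusion $S_i\hookrightarrow K$ yields a nonzero morphism in $S{\rm Hom}_R(P,K)$; for $(b)\Rightarrow(a)$, any simple submodule $S'\subseteq{\rm Soc}(E)$ not isomorphic to any $S_i$ would satisfy $S{\rm Hom}_R(P,S')=0$ (by semiperfectness, any morphism $P_j\to S'$ factors through ${\rm top}(P_j)=S_j$ and must vanish when $S_j\not\cong S'$), forcing $S'\subseteq{\rm st}_P(E)=0$, a contradiction.

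For $(c)\Rightarrow(b)$ I first want to establish $S{\rm Hom}_R(P,{\rm st}_P(E))=0$. Since $P=\bigoplus_{i=1}^{n}P_i$ is a finitely generated projective unitary left $R$-module, the image of any morphism $P\to{\rm st}_P(E)$ lies in a finite sub-sum $K_1+\cdots+K_r$ of submodules with vanishing $S{\rm Hom}_R(P,-)$; projectivity of $P$ applied to the surjection $K_1\oplus\cdots\oplus K_r\twoheadrightarrow K_1+\cdots+K_r$ then reduces the claim to $\bigoplus_j S{\rm Hom}_R(P,K_j)=0$. Then I will invoke Proposition~\ref{b2}(d) with $M={\rm st}_P(E)$ and $N=E$: the natural isomorphism ${\rm Hom}_R({\rm st}_P(E),E)\cong{\rm Hom}_S(S{\rm Hom}_R(P,{\rm st}_P(E)),S{\rm Hom}_R(P,E))=0$ forces the inclusion ${\rm st}_P(E)\hookrightarrow E$ to be zero, so ${\rm st}_P(E)=0$.

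The direction $(b)\Rightarrow(c)$ is the main obstacle. The strategy is to prove that $\gamma_E$ is a monomorphism and then split it using the injectivity of $E$; this exhibits $E$ as a direct summand of $R{\rm Hom}_S(G_P,S{\rm Hom}_R(P,E))$, which lies in $R{\rm Hom}_S(G_P,S{\rm Mod})$ by Proposition~\ref{b2}(b), and since that subcategory is closed under direct summands, so does $E$. To establish injectivity of $\gamma_E$, I will use semiperfectness to write each $P_i\cong Re_i$ for a primitive local idempotent $e_i\in R$ and form $\beta_i\in G_P$ as the composite $P\twoheadrightarrow P_i\cong Re_i\hookrightarrow R$. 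For $m\in K:=\ker\gamma_E$, the relation $\gamma_E(m)=0$ specialised to $(\beta_i,e_i)$ yields $(e_i)\beta_i\cdot m = e_i m = 0$; the standard identification $S{\rm Hom}_R(Re_i,K)\cong e_iK$ then gives $S{\rm Hom}_R(P,K)=\bigoplus_{i=1}^{n}e_iK=0$, so $K\subseteq{\rm st}_P(E)$, which vanishes by hypothesis~(b). The principal subtlety, and the reason this step is the most demanding, is the concrete computation with $\gamma_E$ together with the non-unital identification $P_i\cong Re_i$, both of which must be executed carefully in the ring-with-local-units setting.
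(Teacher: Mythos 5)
Your proof is correct, and for $(a)\Leftrightarrow(b)$ and $(c)\Rightarrow(b)$ it follows essentially the same route as the paper: the same essential-socle argument for $(a)\Leftrightarrow(b)$, and for $(c)\Rightarrow(b)$ the same appeal to Proposition~\ref{b2}(d) with $M={\rm st}_P(E)$, $N=E$ — in fact you are slightly more careful than the paper, which asserts $S{\rm Hom}_R(P,{\rm st}_P(E))=0$ without the finite-generation/projectivity reduction you spell out. The genuine divergence is in $(b)\Rightarrow(c)$: the paper disposes of this implication by citing an external result (Lemma~2.8 of the companion paper [zn]), whereas you give a self-contained argument — show $\ker\gamma_E\subseteq{\rm st}_P(E)=0$ by evaluating $\gamma_E$ against the maps $\beta_i\colon P\twoheadrightarrow P_i\cong Re_i\hookrightarrow R$, then split the resulting monomorphism using injectivity of $E$ and conclude via Proposition~\ref{b2}(b) and closure of $R{\rm Hom}_S(G_P,S{\rm Mod})$ under direct summands (which the paper records explicitly). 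Your route buys independence from the cited lemma at the cost of some non-unital bookkeeping: you must justify that each projective cover $P_i$ of a simple module over a semiperfect ring with local units has the form $Re_i$ for a local idempotent $e_i$ (a fact the paper itself uses without comment, e.g.\ in Step~1 of Proposition~\ref{c4}), and that ${\rm Hom}_R(Re_i,K)\cong e_iK$ in this setting. Both points are standard and your computation with the explicit formula $(\beta)(m)\gamma_E\colon x\mapsto ((x)\beta)m$ is sound, so the argument goes through; note also that your version of $(b)\Rightarrow(c)$ uses only the injectivity of $E$ and not the essentiality of its socle, which is consistent with the paper's hypotheses.
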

\begin{proof}
$(a) \Rightarrow (b)$. Since ${\rm Soc}(E) \in {\rm Add}(\bigoplus_{i=1}^nS_i)$,  $st_P({\rm Soc}(E))=0$. Then $st_P(E)=0$ because ${\rm Soc}(E)$ is an essential submodule of $E$ and $st_P(E) \cap {\rm Soc}(E) \subseteq st_P({\rm Soc}(E))$. \\
$(b)\Rightarrow (a)$. Assume that there exists a simple direct summand $S'$ of ${\rm Soc}(E)$ such that $S' \ncong S_i$ for each $i~(1 \leq i \leq n)$. Then $st_P(S') \neq 0$ since otherwise there is a non-zero morphism $g \in {\rm Hom}_R(P,S')$. Then there exists a non-zero $R$-module homomorphism $h: P \rightarrow P(S')$ such that $hf=g$, where $f:P(S') \rightarrow S'$ is a projective cover of $S'$ in $R$Mod. Hence $S' \cong S_j$ for some $j$ which is a contradiction. Therefore $st_P(S') \neq 0$ and so $st_P(E) \neq 0$ which is a contradiction. Consequence ${\rm Soc}(E) \in {\rm Add}(\bigoplus_{i=1}^nS_i)$. \\
$(b)\Rightarrow (c)$. It follows from \cite[Lemma 2.8]{zn}.\\
$(c)\Rightarrow (b)$. From $E$ is injectively copresented over $P$, by \cite[Lemma 2.5]{zn} we get $${\rm Hom}_R(st_P(E),E) \cong {\rm Hom}_S({\rm Hom}_R(P,st_P(E)),{\rm Hom}_R(P,E))$$  as abelian groups. Since ${\rm Hom}_R(P,st_P(E))=0$, ${\rm Hom}_R(st_P(E),E)=0$ and so  $st_P(E)=0$.
\end{proof}

\section{Gabriel rings and left pure semisimple rings}
In this section we study the left functor (Gabriel) rings of left pure semisimple rings. We show that the left functor ring of any left pure semisimple ring has the properties $(i)$-$(iv)$ (see Corollary \ref{cc1}). Then we show that the properties $(i)$-$(iv)$ classify left pure semisimple rings up to Morita equivalence (see Theorem \ref{re1}). As a consequences we show that every ring with the properties $(i)$-$(iv)$ is Morita equivalent to an Auslander ring if and only if the pure semisimplicity conjecture holds (see Corollary \ref{re8}). \\

A family $\lbrace U_i~|~i \in I\rbrace$ of objects of an abelian category $\mathscr{C}$ is a family of {\it generators} (resp., {\it cogenerators}) for $\mathscr{C}$ if for each non-zero morphism $f: M \rightarrow N$ in $\mathscr{C}$ there exists a morphism $h : U_i \rightarrow M$ (resp., $h : N \rightarrow U_i$), such that $hf \neq 0$ (resp., $fh \neq 0$) (see \cite[p. 94]{bs}). A Grothendieck category $\mathscr{C}$ is said to be {\it locally noetherian} if it has a generating set of noetherian objects (see \cite[p. 123]{bs}). We recall that a ring $R$ has {\it enough idempotents} if there exists a family $\lbrace q_{\alpha}~|~\alpha \in I \rbrace$ of pairwise orthogonal idempotents of $R$ such that $R=\bigoplus_{\alpha \in I}Rq_{\alpha}=\bigoplus_{\alpha \in I}q_{\alpha}R$ (see \cite[p. 39]{fu}). Let $R$ be a ring with enough idempotents. $R$ is called {\it left locally noetherian} if the category $R$Mod is locally noetherian. We recall that {\it dominant dimension} of a unitary left $R$-module $X$ is greater than or equal to $2$, denoted by dom.dim $X \geq 2$, if there is a minimal injective resolution $0 \rightarrow X \rightarrow E_1 \rightarrow E_2$ such that each $E_i$ ($i=1,2$) is a projective unitary left $R$-module. We denoted by ${\rm l.dom.dim}R$ the dominant dimension of left $R$-module $R$. The {\it left global dimension} of $R$, which is denoted by ${\rm l.gl.dim}(R)$, is the supremum of the set of projective dimensions of all unitary left $R$-module.\\

Let $\mathscr{A}$ be an additive category with direct limits. An object $A$ in $\mathscr{A}$ is called {\it finitely presented} if the representable functor $\mathscr{A}(A,-)={\rm Hom}_{\mathscr{A}}(A,-): \mathscr{A} \rightarrow \mathfrak{Ab}$ preserves direct limits (see \cite[p. 1642]{wc}). We denote by ${\rm fp}(\mathscr{A})$ the full subcategory of finitely presented objects in $\mathscr{A}$. We recall from \cite[p. 1642]{wc} that $\mathscr{A}$ is called {\it locally finitely presented} if ${\rm fp}(\mathscr{A})$ is skeletally small and $\mathscr{A}={\underrightarrow{\lim}}~{\rm fp}(\mathscr{A})$ (i.e., every object in $\mathscr{A}$ is a direct limit of finitely presented objects in $\mathscr{A}$). Note that every locally finitely presented Grothendieck category $\mathscr{A}$ has a generating set of finitely presented objects. An object $C$ of a Grothendieck category $\mathscr{C}$ is called {\it finitely generated} whenever $C=\sum_{i}C_i$ for a direct family of subobjects $C_i$ of $C$, there exists an index $i_0$ such that $C=C_{i_0}$ (see \cite[p. 121]{bs}). Every finitely presented object in a locally finitely presented Grothendieck category $\mathscr{A}$ is finitely generated. By \cite[Ch.  V, Proposition 3.4]{bs}, an object $X$ in a locally finitely presented Grothendieck category $\mathscr{A}$ is finitely presented if and only if $X$ is finitely generated and every epimorphism $Y \rightarrow X$, where $Y$ is finitely generated, has a finitely generated kernel.\\

Let $\mathscr{A}$ be a locally finitely presented Grothendieck category and $\lbrace U_{\alpha}~|~ \alpha \in J \rbrace$ be a family of finitely presented objects in $\mathscr{A}$. Set $U=\bigoplus_{\alpha \in J} U_{\alpha}$ and for each $\alpha \in J$ let $e_{\alpha}=\pi_{\alpha} \varepsilon_{\alpha}$, where $\pi_{\alpha}:U \rightarrow U_{\alpha}$ is the canonical projection and $\varepsilon_{\alpha}:U_{\alpha} \rightarrow U$ is the canonical injection. For each object $X$ of $\mathscr{A}$, we define as in \cite[p. 296]{aw} (see also, \cite[p. 40]{fu}), $\widehat{{\Hom}}_{\mathscr{A}}(U,X)=\lbrace f \in {\rm Hom}_{\mathscr{A}}(U,X)~|~ e_{\alpha}f =0~ {\rm for ~almost~ all}~ \alpha \in J \rbrace$.  For $X=U$, we write $R:= \widehat{{\rm Hom}}_{\mathscr{A}}(U, U)=\widehat{{\rm End}}_{\mathscr{A}}(U)$. Then $R=\bigoplus_{\alpha \in J}Re_{\alpha}=\bigoplus_{\alpha \in J}e_{\alpha}R$ is a ring with enough idempotents. Fuller in \cite[p. 40]{fu} (see also \cite[Lemma 1.1]{aw}) defined a covariant functor $\widehat{{\Hom}}_{\mathscr{A}}(U,-): \mathscr{A} \rightarrow R$Mod as follows. For any morphism $f: X \rightarrow Y$ in $\mathscr{A}$, he defined $\widehat{{\rm Hom}}_{\mathscr{A}}(U,f): \widehat{{\rm Hom}}_{\mathscr{A}}(U,X) \rightarrow \widehat{{\rm Hom}}_{\mathscr{A}}(U,Y)$ via $g \mapsto gf$. From \cite[p. 40-41]{fu} (see also \cite[Lemma 1.1]{aw}) we observe that the left exact covariant functor $\widehat{{\Hom}}_{\mathscr{A}}(U,-)$  preserves coproducts and products when $U$ is a generator in $\mathscr{A}$.\\

We recall that a set $\lbrace e_1, \cdots, e_m\rbrace$ of idempotents of a semiperfect ring $\Lambda$ is called {\it basic} in case they are pairwise orthogonal, $\Lambda e_i\ncong \Lambda e_j$ for each $i\neq j$  and for each finitely generated indecomposable projective left $\Lambda$-module $P$, there exists $i$ such that $P \cong \Lambda e_i$. Clearly, the cardinal of any two basic sets of idempotents of a semiperfect ring $\Lambda$ are equal.\\

Two rings  with enough idempotents $R$ and $S$ are said to be {\it Morita equivalent} in case there exists an additive covariant equivalence between $R$Mod and $S$Mod (see \cite{wi}).

\begin{Pro}\label{r4}
Let $\Lambda$ be a left artinian ring and let $\lbrace V_{\alpha}~|~ \alpha \in J \rbrace$ be a family of finitely generated indecomposable left $\Lambda$-modules. Set $V=\bigoplus_{\alpha \in J} V_{\alpha}$ and $R=\widehat{{\rm End}}_{\Lambda}(V)$. Then $R$ has the property $(i)$. In particular,
\begin{itemize}
\item[$(a)$] If $V$ is a generator in $\Lambda$-{\rm Mod} and ${\rm l.gl.dim}R= 0 ~{\rm or} ~2$, then $R$ is a left locally noetherian ring.
\item[$(b)$] If $V$ is a generator-cogenerator in $\Lambda$-{\rm Mod}, then $R$ has the properties $(iii)$-$(iv)$. Moreover $S={\rm End}_R(\bigoplus_{i=1}^nP_i)$ is Morita equivalent to $\Lambda$, where each $P_i$ is a projective cover of the simple module $S_i$.
\end{itemize}
\end{Pro}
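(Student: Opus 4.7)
The plan is to exploit the canonical decomposition $R = \bigoplus_{\alpha} Re_{\alpha} = \bigoplus_{\alpha} e_{\alpha}R$ coming from $V = \bigoplus_{\alpha} V_{\alpha}$, and to transport the relevant structure from $\Lambda\text{-mod}$ to $R\text{Mod}$ through the functor $\widehat{\Hom}_{\Lambda}(V,-)$.

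For property $(i)$: by construction the idempotents $e_{\alpha}$ attached to the summands $V_{\alpha}$ are pairwise orthogonal and $e_{\alpha}Re_{\alpha} \cong {\rm End}_{\Lambda}(V_{\alpha})$. Since $V_{\alpha}$ is finitely generated over the left artinian ring $\Lambda$, it has finite composition length, so Fitting's lemma shows ${\rm End}_{\Lambda}(V_{\alpha})$ is semiprimary; together with indecomposability of $V_{\alpha}$ this forces ${\rm End}_{\Lambda}(V_{\alpha})$ to be local with nilpotent Jacobson radical, hence right perfect, and $e_{\alpha}$ a local idempotent. This gives $(i)$ directly.

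For (a): since $V$ is a generator, the restriction of $\widehat{\Hom}_{\Lambda}(V,-)$ to $\Lambda\text{-mod}$ is fully faithful, and $Re_{\alpha} = \widehat{\Hom}_{\Lambda}(V, V_{\alpha})$. To show $R\text{Mod}$ is locally noetherian, it suffices to prove each $Re_{\alpha}$ is noetherian. I would use the bound ${\rm l.gl.dim}\, R \leq 2$ together with an Auslander-style presentation: any finitely presented $R$-module of projective dimension $\leq 2$ fits into a resolution $\widehat{\Hom}_{\Lambda}(V,V_1) \to \widehat{\Hom}_{\Lambda}(V,V_0) \to X \to 0$ and so lies in the essential image of $\widehat{\Hom}_{\Lambda}(V,-)$ applied to a finitely generated $\Lambda$-module. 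Chains of finitely generated submodules of $Re_{\alpha}$ then pull back via full faithfulness to chains of submodules of the finite-length $\Lambda$-module $V_{\alpha}$, which must stabilise; the case ${\rm l.gl.dim}\, R = 0$ is immediate since $R$ is then von Neumann regular and each $e_{\alpha}Re_{\alpha}$ is a (local artinian) division ring.

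For (b), with $V$ a generator-cogenerator: every finitely generated $\Lambda$-module embeds in an injective in ${\rm add}(V)$, so each $V_{\alpha}$ admits an injective copresentation $0 \to V_{\alpha} \to I_0 \to I_1$ with $I_0, I_1 \in {\rm add}(V)$. Applying the left-exact functor $\widehat{\Hom}_{\Lambda}(V,-)$ produces an exact sequence $0 \to Re_{\alpha} \to \widehat{\Hom}_{\Lambda}(V, I_0) \to \widehat{\Hom}_{\Lambda}(V, I_1)$, whose right-hand terms are finitely generated projective in $R\text{Mod}$; a standard adjunction/transpose argument using that $V$ is a cogenerator identifies them as also injective in $R\text{Mod}$, yielding ${\rm dom.dim}\, Re_{\alpha} \geq 2$, i.e. $(iv)$. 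For $(iii)$, the indecomposable projective-injective objects of $R\text{Mod}$ correspond under $\widehat{\Hom}_{\Lambda}(V,-)$ precisely to the indecomposable summands of the injective cogenerator of $\Lambda\text{-mod}$, namely the $\widehat{\Hom}_{\Lambda}(V, E(S_{\Lambda,l}))$ for $l = 1,\ldots,n$; these coincide with the injective hulls $E(S_l)$ of the $n$ simples of $R\text{Mod}$, which gives the description ${\rm proj}(R)\cap {\rm inj}(R) = {\rm add}(\bigoplus_{l=1}^n E(S_l))$. For the Morita equivalence claim, full faithfulness of $\widehat{\Hom}_{\Lambda}(V,-)$ on $\Lambda\text{-mod}$ yields $S \cong {\rm End}_{\Lambda}(\bigoplus_{i=1}^n P_{\Lambda,i})$, where the $P_{\Lambda,i}$ are the projective covers of the simple $\Lambda$-modules; this basic endomorphism ring is Morita equivalent to $\Lambda$.

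The main obstacle I expect is part (a): translating the homological bound ${\rm l.gl.dim}\, R \leq 2$ into the noetherian property in the non-unital setting. The classical Auslander argument is phrased for unital rings, so care is needed with the restricted hom $\widehat{\Hom}$ and with the fact that $V$ itself is not finitely generated in $\Lambda\text{-mod}$. The key technical point is verifying that every finitely generated $R$-submodule of $Re_{\alpha}$ comes from the image of $\widehat{\Hom}_{\Lambda}(V,-)$ applied to a finitely generated $\Lambda$-submodule, so that noetherianness of $V_{\alpha}$ can be transferred upstairs.
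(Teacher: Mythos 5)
Your overall strategy---decomposing $R$ through the idempotents $e_{\alpha}$ and transporting structure along $\widehat{\mathrm{Hom}}_{\Lambda}(V,-)$---is the same as the paper's, and your treatment of property $(i)$, of property $(iv)$, and of the Morita equivalence $S\sim\Lambda$ essentially reproduces the paper's arguments (which are packaged there as citations to Wisbauer). The genuine problem is part $(a)$, where both of your reductions fail. First, a finitely presented $R$-module with presentation $\widehat{\mathrm{Hom}}_{\Lambda}(V,V_1)\to\widehat{\mathrm{Hom}}_{\Lambda}(V,V_0)\to X\to 0$ need \emph{not} lie in the essential image of $\widehat{\mathrm{Hom}}_{\Lambda}(V,-)$: that functor is only left exact, so the cokernel of $\widehat{\mathrm{Hom}}_{\Lambda}(V,f)$ is not $\widehat{\mathrm{Hom}}_{\Lambda}(V,\mathrm{Coker}\,f)$. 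For instance, when $V_{\alpha}$ is non-projective the simple module $Re_{\alpha}/J(R)e_{\alpha}$ is finitely presented of projective dimension at most $2$, yet it cannot equal $\widehat{\mathrm{Hom}}_{\Lambda}(V,M)$ for $M\neq 0$, since evaluating at the summand of $V$ coming from $\Lambda$ (which lies in $\mathrm{add}(V)$ because $V$ is a generator) would force $M=0$. Second, and more seriously, finitely generated $R$-submodules of $Re_{\alpha}\cong\widehat{\mathrm{Hom}}_{\Lambda}(V,V_{\alpha})$ do \emph{not} correspond to $\Lambda$-submodules of $V_{\alpha}$: such a submodule is the image of a map $\widehat{\mathrm{Hom}}_{\Lambda}(V,W)\to\widehat{\mathrm{Hom}}_{\Lambda}(V,V_{\alpha})$ induced by some $g\colon W\to V_{\alpha}$ with $W\in\mathrm{add}(V)$, i.e.\ the subfunctor of morphisms factoring through $g$, and this is strictly finer data than the submodule $\mathrm{Im}(g)\subseteq V_{\alpha}$; the fibers of the evaluation-at-$\Lambda$ map from subfunctors to submodules of $V_{\alpha}$ can contain infinite strictly ascending chains. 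Note that your argument, if it worked, would use only the finite length of $V_{\alpha}$ and would prove local noetherianness for \emph{every} left artinian $\Lambda$ and every generator $V$, making the hypothesis $\mathrm{l.gl.dim}\,R=0$ or $2$ superfluous; that hypothesis must enter in an essential way, and the paper itself obtains $(a)$ by invoking the external \cite[Lemma 2.3]{fin} rather than by a direct transfer of chains.

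There is also a smaller gap in your treatment of $(iii)$: you assert that the indecomposable projective--injective objects of $R\mathrm{Mod}$ are exactly the $\widehat{\mathrm{Hom}}_{\Lambda}(V,E(S_{\Lambda,l}))$ and that these are the injective hulls of the simple $R$-modules, but the nontrivial point---which occupies most of the paper's proof of $(b)$---is showing that $\widehat{\mathrm{Hom}}_{\Lambda}(V,E(\Lambda e_i/J(\Lambda)e_i))$ has a \emph{nonzero} (hence simple essential) socle. A priori an indecomposable injective unitary $R$-module over a non-artinian ring with enough idempotents can have zero socle, so this must be argued; the paper does it by exhibiting an explicit nonzero semisimple image of $\widehat{\mathrm{Hom}}_{\Lambda}(V,\pi_{\alpha_i}\rho_{\alpha_i}\rho_i)$ and verifying that $J(R)\widehat{\mathrm{End}}_{\Lambda}(V)$ lies in its kernel via an idempotent-lifting contradiction. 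You should supply this step before concluding $\mathrm{proj}(R)\cap\mathrm{inj}(R)=\mathrm{add}\bigl(\bigoplus_{l=1}^{n}E(S_l)\bigr)$.
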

\begin{proof} It follows from \cite[Propositions 32.4 and 51.6]{wi} and the fact that $\widehat{{\Hom}}_{\Lambda}(V,V_{\alpha}) \cong Re_{\alpha}$ as $R$-modules, where $e_{\alpha}=\pi_{\alpha} \varepsilon_{\alpha}$,  $\pi_{\alpha}:U \rightarrow U_{\alpha}$ is the canonical projection and $\varepsilon_{\alpha}:U_{\alpha} \rightarrow U$ is the canonical injection.\\
$(a)$. It follows from \cite[Lemma 2.3]{fin}.\\
$(b)$. Since $V$ is a generator in $\Lambda$-Mod, by \cite[Proposition 51.7]{wi}, the functor $\widehat{{\Hom}}_{\Lambda}(V,-):\Lambda$-Mod$\rightarrow R$Mod preserves indecomposable modules, injective modules and essential extensions. Hence there exists a minimal injective resolution $0 \longrightarrow {\widehat{{\rm{Hom}}}}_{\Lambda}(V,V_{\alpha}) \longrightarrow {\widehat{{\rm{Hom}}}}_{\Lambda}(V,E(V_{\alpha})) \longrightarrow Y$, where  $Y$ is a direct summand of ${\widehat{{\rm{Hom}}}}_{\Lambda}(V,E(E(V_{\alpha})/V_{\alpha}))$. Let $\lbrace \Lambda e_1/J(\Lambda)e_1, \cdots ,\Lambda e_m/J(\Lambda)e_m \rbrace$ be a complete set of non-isomorphic simple left $\Lambda$-modules, where $\lbrace e_1, \cdots, e_m \rbrace $ is a basic set of idempotents of $\Lambda$. Since $\Lambda$ is a left artinian ring, the injective hull of any finitely generated left $\Lambda$-module belongs to be ${\rm add}(\bigoplus_{i=1}^mE(\Lambda e_i/J(\Lambda)e_i))$. On the other hand, since $V$ is a cogenerator in $\Lambda$-Mod, each $E(\Lambda e_l/J(\Lambda)e_l) \in {\rm add}(V)$. Since the functor $\widehat{{\Hom}}_{\Lambda}(V,-)$ preserves direct sums, ${\widehat{{\rm{Hom}}}}_{\Lambda}(V,E(V_{\alpha}))$ and $Y$ are finitely generated projective unitary left $R$-modules. Hence $R$ has the property $(iv)$.
\vspace{4mm}
Now we show that $R$ has the property $(iii)$.  Since $V$ is a generator, $\Lambda e_i \cong V_{\alpha_i}$ for some $\alpha_i \in J$. Let $\rho_i:\Lambda e_i \rightarrow \Lambda e_i/J(\Lambda)e_i$ be a projective cover of $\Lambda e_i/J(\Lambda)e_i$ and $\pi_{\alpha_i}: V \rightarrow V_{\alpha_i}$ be the canonical projection. Then $\pi_{\alpha_i} \rho_{\alpha_i} \rho_i$ is a non-zero morphism in $\widehat{{\rm{Hom}}}_{\Lambda}(V,\Lambda e_i/J(\Lambda)e_i)$, where $\rho_{\alpha_i}: V_{\alpha_i} \rightarrow \Lambda e_i$ is an isomorphism. Consider the $R$-module homomorphism $$\varphi:=\widehat{{\rm{Hom}}}_{\Lambda}(V,\pi_{\alpha_i} \rho_{\alpha_i} \rho_i): \widehat{{\rm{End}}}_{\Lambda}(V) \rightarrow \widehat{{\rm{Hom}}}_{\Lambda}(V,\Lambda e_i/J(\Lambda)e_i).$$  It is easy to see that $\varphi$ is a non-zero morphism. Also $J(R)\widehat{{\rm{End}}}_{\Lambda}(V) \subseteq {\rm Ker}\varphi$ since otherwise there exist $t \in J(R)$ and $\gamma \in \widehat{{\rm{End}}}_{\Lambda}(V)$ such that $t\gamma \pi_{\alpha_i} \rho_{\alpha_i} \rho_i \neq 0$. It follows that ${\rm Im}~t\gamma \pi_{\alpha_i} \rho_{\alpha_i} \nsubseteq {\rm Ker}\rho_i$. Since $\rho_i: \Lambda e_i \rightarrow \Lambda e_i/J(\Lambda)e_i$ is a projective cover of $\Lambda e_i/J(\Lambda)e_i$, the morphism $t\gamma \pi_{\alpha_i} \rho_{\alpha_i}: V \rightarrow \Lambda e_i$ is an epimorphism and so there is a $\Lambda$-module homomorphism $h: \Lambda e_i \rightarrow V$ such that $ht\gamma \pi_{\alpha_i} \rho_{\alpha_i} = id_{\Lambda e_i}$. Let $\varepsilon_{\alpha_i}: V_{\alpha_i} \rightarrow V$ be the canonical injection. Since $\pi_{\alpha_i} \rho_{\alpha_i} ht\gamma \pi_{\alpha_i} \rho_{\alpha_i} {\rho^{-1}_{\alpha_i}} \varepsilon_{\alpha_i}= \pi_{\alpha_i} \varepsilon_{\alpha_i}$, $\pi_{\alpha_i}\varepsilon_{\alpha_i} \in J(R)$. By \cite[Proposition 49.7]{wi}, this is a contradiction with the fact that $J(R)$ contains no idempotent. Consequently ${\rm Im}\varphi$ is a non-zero semisimple submodule of $\widehat{{\rm{Hom}}}_{\Lambda}(V,\Lambda e_i/J(\Lambda)e_i)$. It follows that ${\rm Soc}(\widehat{{\rm{Hom}}}_{\Lambda}(V,E(\Lambda e_i/J(\Lambda)e_i))) \neq 0$ and so\\ ${\widehat{{\rm{Hom}}}_{\Lambda}(V,E(\Lambda e_i/J(\Lambda)e_i))}$ has a simple essential socle. Hence $$\widehat{{\rm{Hom}}}_{\Lambda}(V,E(\Lambda e_i/J(\Lambda)e_i)) \cong E(R\pi_{\alpha_i} \rho_{\alpha_i} \rho_i)$$ as $R$-modules. Since $R\pi_{\alpha_i} \rho_{\alpha_i} \rho_i \cong R\pi_{\alpha_j} \rho_{\alpha_j} \rho_j$ if and only if $\Lambda e_i/J(\Lambda)e_i \cong \Lambda e_j/J(\Lambda)e_j$, so $\lbrace R\pi_{\alpha_i} \rho_{\alpha_i} \rho_i ~|~1 \leq i \leq m \rbrace$ is a set of non-isomorphic simple unitary left $R$-modules such that ${\rm add}(\bigoplus_{i=1}^m E(R\pi_{\alpha_i} \rho_{\alpha_i} \rho_i)) \subseteq {\rm proj}(R) \cap {\rm inj}(R)$.  But because for each finitely generated indecomposable projective unitary left $R$-module $Q$,  $Q \in {\rm add}(\bigoplus_{i=1}^m E(R\pi_{\alpha_i} \rho_{\alpha_i} \rho_i))$ when $Q \in {\rm inj}(R)$, then ${\rm proj}(R) \cap {\rm inj}(R) \subseteq {\rm add}(\bigoplus_{i=1}^m E(R\pi_{\alpha_i} \rho_{\alpha_i} \rho_i))$ and so $R$ has the property $(iii)$.
\vspace{4mm}
We know that the mapping $R \pi_{\alpha_i}\rho_{\alpha_i} \rightarrow R\pi_{\alpha_i}\rho_{\alpha_i} \rho_i$ defined by $r\pi_{\alpha_i}\rho_{\alpha_i} \longmapsto r\pi_{\alpha_i} \rho_{\alpha_i} \rho_i$ is a nonzero $R$-module epimorphism. Since $R\pi_{\alpha_i}\rho_{\alpha_i} = \widehat{{\rm{Hom}}}_{\Lambda}(V,\Lambda e_i)$, $R\pi_{\alpha_i}\rho_{\alpha_i}$ is a finitely generated indecomposable projective unitary left $R$-module. So it is a projective cover of $R\pi_{\alpha_i} \rho_{\alpha_i} \rho_i$. Set $e=e_1 + \cdots + e_m$. Since $P= \bigoplus_{i=1}^m\widehat{{\rm{Hom}}}_{\Lambda}(V,\Lambda e_i) \cong \widehat{{\rm{Hom}}}_{\Lambda}(V,\Lambda e)$ as $R$-modules, \begin{center}
$S= {\rm End}_R(P) \cong {\rm End}_R({\widehat{\rm{Hom}}}_{\Lambda}(V,\Lambda e)) \cong {\rm End}_{\Lambda}(\Lambda e) \cong e\Lambda e$
\end{center}
as rings. It follows that $S$ is Morita equivalent to $\Lambda$ and the result holds.
\end{proof}

Following \cite{S74, S77} (cf. \cite{wc}) a locally finitely presented Grothendieck category $\mathscr{A}$ is {\it pure semisimple} if it has pure global dimension zero. Equivalently, each of its objects is a direct summand of a coproduct of finitely presented objects. A ring $\Lambda$ is called {\it left pure semisimple} if the category $\Lambda$-Mod is pure semisimple. \\

Let $\mathscr{A}$ be a locally finitely presented Grothendieck category and suppose that every finitely presented object in $\mathscr{A}$ is a finite coproduct of indecomposable objects in $\mathscr{A}$. Let $\lbrace U_{\alpha}~|~ \alpha \in J \rbrace$ be a complete set of non-isomorphic finitely presented indecomposable objects in $\mathscr{A}$. Following \cite{y}, $\widehat{{\rm End}}_{\mathscr{A}}(\bigoplus _{\alpha \in J} U_{\alpha})$ is called {\it functor ring {\rm(or} Gabriel ring{\rm )} of $\mathscr{A}$} and we denote it by $R_{\mathscr{A}}$. By using \cite[Lemma 1.1]{aw}, we can see that $\widehat{{\Hom}}_{\mathscr{A}}(U,-): \mathscr{A} \rightarrow {\rm Flat}(R_{\mathscr{A}})$ is an additive equivalence which induces an equivalence between ${\rm fp}(\mathscr{A})$ and ${\rm proj}(R_{\mathscr{A}})$, where ${\rm Flat}(R_{\mathscr{A}})$ is the full subcategory of $R_{\mathscr{A}}{\rm Mod}$ consisting of flat unitary left $R_{\mathscr{A}}$-modules. By \cite[Theorem 3.4]{zn}, the functor ring of $\mathscr{A}$ uniquely determines up to Morita equivalence. Note that if $\Lambda$ is a left artinian ring (resp., right artinian), then we denoted the functor ring of $\Lambda$-Mod (resp., Mod-$\Lambda$) by $R_{\Lambda}$ (resp., $R_{{\Lambda}^{\rm op}}$) and we call it {\it left functor ring}  of $\Lambda$ (resp., {\it right functor ring} of $\Lambda$).

\begin{Cor}\label{cc1}
Let $\Lambda$ be a left pure semisimple ring. Then the left functor ring $R_{\Lambda}$ of $\Lambda$ has the properties $(i)$-$(iv)$.
\end{Cor}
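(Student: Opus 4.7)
Since $\Lambda$ is left pure semisimple, $\Lambda$ is left artinian and every left $\Lambda$-module is a direct sum of finitely generated indecomposable modules (classical results). Let $\{V_{\alpha}\}_{\alpha \in J}$ be a complete set of representatives of the isomorphism classes of finitely generated indecomposable left $\Lambda$-modules and set $V = \bigoplus_{\alpha \in J} V_{\alpha}$, so that $R_{\Lambda} = \widehat{\rm End}_{\Lambda}(V)$ by definition. Every indecomposable projective left $\Lambda$-module is a direct summand of $\Lambda$, and every injective hull of a simple left $\Lambda$-module is finitely generated indecomposable because $\Lambda$ is left artinian; both families therefore appear among the $V_{\alpha}$, so $V$ is both a generator and a cogenerator for $\Lambda\text{-Mod}$. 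Proposition \ref{r4} now delivers property $(i)$ directly, and its part $(b)$ delivers properties $(iii)$ and $(iv)$.

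For property $(ii)$, the plan is to first prove ${\rm l.gl.dim}\,R_{\Lambda}\in\{0,2\}$ and then invoke Proposition \ref{r4}(a) to obtain that $R_{\Lambda}$ is left locally noetherian. To establish ${\rm l.gl.dim}\,R_{\Lambda}\leq 2$, let $M$ be any unitary left $R_{\Lambda}$-module with a projective presentation $P_1 \to P_0 \to M \to 0$. Since $R_{\Lambda}$ is left perfect (a classical consequence of $\Lambda\text{-Mod}$ being pure semisimple), ${\rm Flat}(R_{\Lambda}) = {\rm Proj}(R_{\Lambda})$, and the functor $\widehat{\rm Hom}_{\Lambda}(V,-)$ extends to an equivalence between $\Lambda\text{-Mod}$ and ${\rm Proj}(R_{\Lambda})$; under this equivalence the presentation lifts to a morphism $f: W_1 \to W_0$ in $\Lambda\text{-Mod}$. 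The decisive pure-semisimple input is that $K := \ker f$, being a left $\Lambda$-module, decomposes as $\bigoplus V_{\alpha_i}^{(I_i)}$, so $\widehat{\rm Hom}_{\Lambda}(V,K) = \bigoplus R_{\Lambda}e_{\alpha_i}^{(I_i)}$ is projective. Applying the left exact functor $\widehat{\rm Hom}_{\Lambda}(V,-)$ to $0 \to K \to W_1 \xrightarrow{f} W_0$ identifies $\ker(P_1 \to P_0)$ with the projective module $\widehat{\rm Hom}_{\Lambda}(V,K)$, producing the projective resolution $0 \to \widehat{\rm Hom}_{\Lambda}(V,K) \to P_1 \to P_0 \to M \to 0$; hence ${\rm pd}\,M \leq 2$.

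To rule out ${\rm l.gl.dim}\,R_{\Lambda} = 1$: if $\Lambda$ is semisimple then $\Lambda$ has only finitely many simples, $V$ is their finite direct sum, and $R_{\Lambda}$ is a finite product of division rings, so ${\rm l.gl.dim}\,R_{\Lambda} = 0$. Otherwise, pick a non-projective finitely generated indecomposable $V_{\alpha}$; the almost split sequence $0 \to \tau V_{\alpha} \to E \to V_{\alpha} \to 0$ in $\Lambda\text{-mod}$, which exists because $\Lambda$ is left artinian, yields under $\widehat{\rm Hom}_{\Lambda}(V,-)$ the minimal projective resolution $0 \to \widehat{\rm Hom}_{\Lambda}(V,\tau V_{\alpha}) \to \widehat{\rm Hom}_{\Lambda}(V,E) \to R_{\Lambda}e_{\alpha} \to S_{\alpha} \to 0$ of the simple $R_{\Lambda}$-module $S_{\alpha} = R_{\Lambda}e_{\alpha}/J(R_{\Lambda})e_{\alpha}$, with $\widehat{\rm Hom}_{\Lambda}(V,\tau V_{\alpha}) \neq 0$, so ${\rm pd}\,S_{\alpha} = 2$ and ${\rm l.gl.dim}\,R_{\Lambda} = 2$. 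Proposition \ref{r4}(a) then completes $(ii)$. The main obstacle is this global-dimension bound: the upper half exploits pure semisimplicity decisively through the decomposition of $K$ into finitely generated indecomposables, while ruling out ${\rm l.gl.dim}\,R_{\Lambda} = 1$ rests on Auslander--Reiten theory over the left artinian ring $\Lambda$.
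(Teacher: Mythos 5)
Your overall architecture matches the paper for properties $(i)$, $(iii)$ and $(iv)$ (reduce to Proposition \ref{r4} after observing that $V$ is a generator--cogenerator), and for $(ii)$ you replace the paper's citations with direct arguments: the paper obtains ${\rm l.gl.dim}\,R_{\Lambda}\leq 2$ from \cite[Theorem 2]{fs} and \cite[Lemma 2.2]{fin}, whereas you reprove it by the syzygy argument (the kernel of a lifted presentation decomposes into finitely generated indecomposables, so its image under $\widehat{{\rm Hom}}_{\Lambda}(V,-)$ is projective) --- that argument is correct and is essentially the content of the cited results. For excluding ${\rm l.gl.dim}\,R_{\Lambda}=1$ the paper argues that global dimension one would force every finitely generated submodule of a projective to be projective, hence $\Lambda$ semisimple by \cite[Proposition 52.8]{wi} and then ${\rm l.gl.dim}\,R_\Lambda=0$; your route via the minimal projective presentation of a simple functor is a legitimate alternative. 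These are genuinely different but comparable paths, with the paper's being shorter and yours more self-contained.

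However, two of your justifications are wrong as stated, and both have the same shape: you attribute to ``$\Lambda$ is left artinian'' facts that actually require pure semisimplicity. First, the injective hull of a simple module over a left artinian ring need \emph{not} be finitely generated (Rosenberg--Zelinsky-type examples with infinite division ring extensions), so your verification that $V$ is a cogenerator is unjustified as written; the correct argument is that $E(S)$, like every module over the left pure semisimple ring $\Lambda$, is a direct sum of finitely generated indecomposables, and being indecomposable it is itself isomorphic to some $V_\alpha$. Second, almost split sequences in $\Lambda\text{-mod}$ do not exist for arbitrary left artinian rings --- the classical existence theorem is for Artin algebras, and for a general artinian ring one only gets (by Auslander's existence theorem) an almost split sequence in $\Lambda\text{-Mod}$ ending at a finitely presented module with local endomorphism ring, whose left-hand term need not be finitely generated. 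Here again pure semisimplicity rescues you: the left term decomposes into finitely generated indecomposables, so $\widehat{{\rm Hom}}_{\Lambda}(V,-)$ still sends the sequence to a projective presentation of $S_\alpha$ with nonzero second syzygy (alternatively, use that $R_\Lambda$ is left perfect to produce the sink map as a projective cover of ${\rm rad}(-,V_\alpha)$ and note that its kernel is nonzero because a right almost split map onto a non-projective indecomposable is a non-split epimorphism). With these two repairs your proof is complete; without them, both the cogenerator step (hence $(iii)$--$(iv)$) and the exclusion of global dimension one rest on false general claims about left artinian rings.
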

\begin{proof}
By Proposition \ref{r4}, $R_{\Lambda}$ has the properties $(i)$ and $(iii)$-$(iv)$. Also by \cite[Theorem 2]{fs} and \cite[Lemma 2.2]{fin}, ${\rm l.gl.dim}R_{\Lambda} \leq 2$.  Moreover, if we had ${\rm l.gl.dim}R_{\Lambda} = 1$, then every finitely generated submodule of a projective unitary left $R_{\Lambda}$-module is projective. By \cite[Proposition 52.8]{wi}, $\Lambda$ is a left semisimple ring and so $R_{\Lambda}$ is Morita equivalent to $\Lambda$. This yields that every unitary left $R_{\Lambda}$-module is projective which is a contradiction. This proves that ${\rm l.gl.dim}R_{\Lambda}$ is either 0 or 2. Also by Proposition \ref{r4}(a), $R_{\Lambda}$ has the properties $(ii)$.
\end{proof}

Recall that a Grothendieck category $\mathscr{A}$ is called {\it perfect} if each of its objects admits a projective cover (see \cite[p. 334]{ha1}). A ring $R$ with enough idempotents is called {\it left {\rm (resp., }right{\rm)} perfect} if the category $R$Mod (resp., Mod$R$) is a perfect category.

\begin{Pro}\label{c4}
Let $R$ be a ring with the properties $(i)$-$(iv)$ and $S={\rm End}_R(P)$, where $P=\bigoplus_{i=1}^nP_i$ and each $P_i$ is a projective cover of $S_i$. Then
\begin{itemize}
\item[$(a)$] ${\rm Hom}_R(P,-):{\rm Proj}(R) \rightarrow S$-{\rm Mod} is an equivalence of categories.
\item[$(b)$] $S$ is a left pure semisimple ring.
\end{itemize}
\end{Pro}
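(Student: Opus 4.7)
I would deduce part (a) from Corollary \ref{b3} by establishing the equality ${\rm Proj}(R) = R{\rm Hom}_S(G_P, S{\rm Mod})$. Since $S = {\rm End}_R(P)$ has an identity we have $S{\rm Hom}_R(P,-) = {\rm Hom}_R(P,-)$, so Corollary \ref{b3} already supplies an equivalence ${\rm Hom}_R(P,-) : R{\rm Hom}_S(G_P, S{\rm Mod}) \to S{\rm Mod}$, and once the above equality is proved (a) will follow by restriction. Two general facts will be used repeatedly: because $P$ is projective, the left adjoint ${\rm Hom}_R(P,-)$ is exact, and by the standard adjunction criterion the right adjoint $R{\rm Hom}_S(G_P,-)$ preserves injectives while, dually, ${\rm Hom}_R(P,-)$ sends injectives in $R{\rm Mod}$ to injectives in $S{\rm Mod}$; and property (ii) ensures that arbitrary direct sums of injective unitary $R$-modules are again injective and, by local noetherianness, have essential socle.

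For the inclusion ${\rm Proj}(R) \subseteq R{\rm Hom}_S(G_P, S{\rm Mod})$ I would first treat one indecomposable projective $Re_\alpha$: property (iv) produces an exact sequence $0 \to Re_\alpha \to E_0 \to E_1$ with $E_0, E_1$ projective-injective, property (iii) places $E_0, E_1 \in {\rm add}(\bigoplus_{l=1}^n E(S_l))$, and each $E(S_l)$ has simple essential socle $S_l$, so by Lemma \ref{c1} we have $E(S_l) \in R{\rm Hom}_S(G_P, S{\rm Mod})$. Closure under finite direct sums and summands then gives $E_0, E_1 \in R{\rm Hom}_S(G_P, S{\rm Mod})$, and Proposition \ref{b2}(c) forces $Re_\alpha \in R{\rm Hom}_S(G_P, S{\rm Mod})$. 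For a general projective $Q = \bigoplus_\beta Re_{\alpha_\beta}$ (the decomposition being guaranteed by (i) and the right perfectness of each $e_\alpha R e_\alpha$) I would take the coproduct of these injective copresentations; by (ii) the resulting sums $\bigoplus_\beta E_j^\beta$ remain injective, and since they are direct sums of $E(S_l)$'s (up to summands) with essential socle in ${\rm Add}(\bigoplus_{l=1}^n S_l)$ they again lie in $R{\rm Hom}_S(G_P, S{\rm Mod})$ by Lemma \ref{c1}, so Proposition \ref{b2}(c) yields $Q \in R{\rm Hom}_S(G_P, S{\rm Mod})$. For the reverse inclusion, given $N \in R{\rm Hom}_S(G_P, S{\rm Mod})$ I would, via Proposition \ref{b2}(c), choose an exact sequence $0 \to N \to R{\rm Hom}_S(G_P, I_0) \to R{\rm Hom}_S(G_P, I_1)$ with the $I_j$ injective in $S{\rm Mod}$. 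The modules $R{\rm Hom}_S(G_P, I_j)$ are then injective in $R{\rm Mod}$ and lie in $R{\rm Hom}_S(G_P, S{\rm Mod})$; by Lemma \ref{c1} plus (ii) their essential socle is in ${\rm Add}(\bigoplus_{l=1}^n S_l)$, so each is a direct sum of $E(S_l)$'s and, by (iii), projective. Hence $N$ is the kernel of a map between projective-injective modules; by (ii) submodules of projectives have projective dimension at most $1$, so the image of this map has projective dimension at most $1$, and a syzygy argument then makes $N$ itself projective.

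For part (b) I would exploit the equivalence just established. Property (i), in the form of the non-unital Anderson--Fuller decomposition available because each $e_\alpha R e_\alpha$ is right perfect, implies every projective unitary left $R$-module decomposes as a coproduct $\bigoplus_\beta Re_{\alpha_\beta}$ of finitely generated indecomposable projectives, so under the equivalence of (a) every left $S$-module takes the form $\bigoplus_\beta {\rm Hom}_R(P, Re_{\alpha_\beta})$. Each $Re_\alpha$ is a finitely generated, and hence coproduct-compact, object of $R{\rm Mod}$ and so of ${\rm Proj}(R)$; since ${\rm Hom}_R(P,-)$ preserves coproducts (as $P$ is finitely generated), this coproduct-compactness transfers across the equivalence, making every ${\rm Hom}_R(P, Re_\alpha)$ a finitely generated left $S$-module. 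Consequently every left $S$-module is a coproduct of finitely generated $S$-modules, which is exactly the assertion that $S$ is left pure semisimple.

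The main obstacle I anticipate is the reverse inclusion in part (a): showing that $R{\rm Hom}_S(G_P, S{\rm Mod})$ contains no objects beyond the projectives requires combining three of the four hypotheses at once --- local noetherianness from (ii) and the explicit description in (iii) are needed to recognise $R{\rm Hom}_S(G_P, I_j)$ as a direct sum of $E(S_l)$'s and therefore projective, after which the bound ${\rm l.gl.dim}R \leq 2$ from (ii) is what allows the kernel of a map between projectives to be concluded projective via a one-step syzygy. The need to make all of this work for arbitrary, not merely finite, coproducts in the non-unital setting is what forces the full strength of (i)--(iv).
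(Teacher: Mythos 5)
Your overall architecture (reduce (a) to the identity ${\rm Proj}(R)=R{\rm Hom}_S(G_P,S{\rm Mod})$ and invoke Corollary \ref{b3}) is the same as the paper's, and your forward inclusion ${\rm Proj}(R)\subseteq R{\rm Hom}_S(G_P,S{\rm Mod})$ matches the paper's Step 4. But the reverse inclusion has a genuine gap. You assert that local noetherianness (property (ii)) guarantees that the injective modules $R{\rm Hom}_S(G_P,I_j)$ have essential socle, and you then feed this into Lemma \ref{c1}. That is false in general: over a locally noetherian ring an (indecomposable) injective module can have zero socle --- $\mathbb{Q}$ over $\mathbb{Z}$ is the standard example. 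Lemma \ref{c1} takes ``${\rm Soc}(E)$ is essential in $E$'' as a \emph{hypothesis}; it does not produce it, and nothing in your argument supplies it. This is precisely the point the paper spends its Steps 1--3 on: it first proves that $S$ is left artinian (using right perfectness of the $e_\alpha Re_\alpha$ from (i) for the descending chain condition and local noetherianness of $R$ for the ascending one), so that every indecomposable injective left $S$-module is $E(S')$ for a simple $S'$; it then proves by an explicit projective-cover/Jacobson-radical computation that ${\rm Soc}(R{\rm Hom}_S(G_P,E(S')))\neq 0$; only then, after decomposing $R{\rm Hom}_S(G_P,E)$ into indecomposable injectives via local noetherianness, can it conclude that each summand has simple \emph{essential} socle and apply Lemma \ref{c1} together with (iii) to get projectivity. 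Your final syzygy step (using ${\rm l.gl.dim}\,R=0$ or $2$) is fine once projectivity of the two outer terms is secured, but as written the copresenting terms are not known to be projective.

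A secondary consequence of the same omission: you never establish that $S$ is left artinian (or even that simple $S$-modules have the projective covers used implicitly), which the paper needs both for the socle argument above and, in spirit, for part (b). Your part (b) itself is acceptable --- once (a) holds, every left $S$-module is a coproduct of the finitely generated modules ${\rm Hom}_R(P,Re_\alpha)$, which gives left pure semisimplicity (the paper routes this through the local endomorphism rings and Krause's criterion, but your version is an adequate variant). The fix for (a) is to import the paper's Steps 1 and 2 before attempting the reverse inclusion.
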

\begin{proof}
$(a)$. The proof is given in several steps.\\

\textbf{Step 1.} We show that $S$ is a left artinian ring. First we show that $S$ is a right perfect ring. We know that each $P_i \cong Re_{\alpha_i}$ as $R$-modules for some $\alpha_i \in J$. Also, by the property $(iv)$, there exists a minimal injective resolution $0 \rightarrow Re_{\alpha_i} \rightarrow E_0 \rightarrow E_1$ where $E_0$ and $E_1$ are projective and injective unitary left $R$-modules. Since for each $l \in \lbrace 0, 1\rbrace$, $E_l$ is projective unitary left $R$-module, $E_l \cong \bigoplus_{j \in B_l}Re_{\alpha_j}$. Since it is injective in $R$Mod,  each $Re_{\alpha_j} \in {\rm proj}(R) \cap {\rm inj}(R)$. By the property $(iii)$, each $Re_{\alpha_j} \in {\rm add}(\bigoplus_{i=1}^nE(S_i))$ and so $E_l \in {\rm Add}(\bigoplus_{i=1}^nE(S_i))$. By Lemma \ref{c1}, each $E_l \in R{\rm Hom}_S(G_P, S{\rm Mod})$. Because $\gamma=(\gamma_M): id_{R{\rm Mod}} \rightarrow R{\rm Hom}_S(G_P, S{\rm Hom}_R(P,-))$ is a natural transformation and $0 \rightarrow Re_{\alpha_i} \rightarrow E_0 \rightarrow E_1$ is an exact sequence, the canonical morphism $\gamma_{Re_{\alpha_i}}: Re_{\alpha_i} \rightarrow R{\rm Hom}_S(G_P,S{\rm Hom}_R(P,Re_{\alpha_i}))$ is an isomorphism. Hence each $P_i \in R{\rm Hom}_S(G_P, S{\rm Mod})$. Let $\pi_i: P \rightarrow P_i$ and $\varepsilon_i: P_i \rightarrow P$ be the canonical projection and injection, respectively. Set $f_i=\pi_i\varepsilon_i$.  Then it is easy to see that $f_1, \cdots ,f_n$ are orthogonal idempotents in $S$ with $1_S= f_1 + \cdots + f_n$. Since ${\rm Hom}_R(P, P_i) \cong Sf_i$ as $S$-modules, by Corollary \ref{b3}, ${\rm End}_R(P_i) \cong Sf_iS$ as rings and so by the property $(i)$, each $Sf_iS$ is a right perfect ring. By \cite[Ex. 43.12 (2)]{wi}, $S$ is a right perfect ring. Hence by \cite[Proposition 43.9]{wi}, the descending chain condition for cyclic left ideals of $S$ holds. It follows by \cite[Proposition 31.8]{wi} that the descending chain condition for finitely generated left ideals of $S$ also holds. This implies that $S$ is a left artinian ring if $S$ is left noetherian. Therefore it is enough to show that $S$ is a left noetherian ring. Consider the ascending chain $I_1 \subseteq I_2 \subseteq \cdots$ of left ideals of $S$. Then we have an ascending chain $PI_1 \subseteq PI_2 \subseteq \cdots$ of $R$-submodules of $P$. Since $R$ is a left locally noetherian ring and $P$ is a finitely generated unitary left $R$-module, the chain of submodules of $P$ becomes stationary after finitely many steps. Since $P$ is a finitely generated projective unitary left $R$-module, by \cite[Proposition 18.4(3)]{wi}, we can see that the chain  $I_1 \subseteq I_2 \subseteq \cdots$ becomes stationary after finitely many steps. This yields that $S$ is a left noetherian ring.\\

\textbf{Step 2.} Let $S'$ be a simple unitary left $S$-module. We show that ${\rm Soc}(R{\rm Hom}_S(G_P,E(S')))$ is non-zero. Let $g: {\rm Hom}_R(P, Re_{\alpha_i}) \rightarrow S'$ be a projective cover of $S'$. By using \cite[Lemma 2.7]{zn}, it is not difficult to see that $\varphi:= R{\rm Hom}_S(G_P,{\rm Hom}_R(P,\pi_i))R{\rm Hom}_S(G_P,g)$ is a non-zero morphism, where $\pi_i:R \rightarrow Re_{\alpha_i}$ is the canonical projection. We can certainly assume that $J(R)R{\rm End}_S(G_P) \subseteq {\rm Ker}\varphi$, since otherwise there exist $\delta \in R{\rm End}_S(G_P)$ and $r \in J(R)$ such that $(r\delta)\varphi \neq 0$. This yields ${\rm Im}~r\delta {\rm Hom}_R(P,\pi_i) \nsubseteq {\rm Ker}g$ and so $r\delta {\rm Hom}_R(P,\pi_i)$ is a split epimorphism because ${\rm Ker}g$ is a maximal small submodule of ${\rm Hom}_R(P,Re_{\alpha_i})$ and ${\rm Hom}_R(P,Re_{\alpha_i})$ is a projective left $S$-module. Hence there is an $S$-module homomorphism $h: {\rm Hom}_R(P,Re_{\alpha_i}) \rightarrow G_P$ such that $hr\delta {\rm Hom}_R(P,\pi_i)=id_{{\rm Hom}_R(P,Re_{\alpha_i})}$. So ${\rm Hom}_R(P,\pi_i)hr\delta {\rm Hom}_R(P,\pi_i\varepsilon_i)={\rm Hom}_R(P,\pi_i\varepsilon_i)$, where $\varepsilon_i:Re_{\alpha_i} \rightarrow R$ is the canonical injection. On the other hand, since $R=\bigoplus_{\alpha \in J}Re_{\alpha}$ has the property $(iv)$ and $R$ is left locally noetherian, by Lemma \ref{c1}, we can see that $R$ is injectively copresented over $P$. Hence by Proposition \ref{b2}, there exists a ring isomorphism $R \rightarrow R{\rm End}_S(G_P)$ via $s \mapsto {\rm Hom}_R(P, \rho_s)$, where the mapping $\rho_s \in R{\rm End}_R(R)$ defined by $(x)\rho_s=xs$ for each $x \in R$. Since $r \in J(R)$, ${\rm Hom}_R(P, \rho_r) \in J(R{\rm End}_S(G_P))$ and so $r \delta={\rm Hom}_R(P, \rho_r)\delta \in J(R{\rm End}_S(G_P))$. Therefore ${\rm Hom}_R(P,\pi_i\varepsilon_i) \in J(R{\rm End}_S(G_P))$ which is a contradiction. Hence  ${\rm Im}\varphi$ is a semisimple unitary left $R$-module and so ${\rm Soc}(R{\rm Hom}_S(G_P,E(S')))$ is non-zero. \\

\textbf{Step 3.} Let $E$ be an injective left $S$-module. We show that $R{\rm Hom}_S(G_P,E)$ is a projective unitary left $R$-module. By using \cite[Lemma 2.7]{zn} and Proposition \ref{b2}, we can easily see that $R{\rm Hom}_S(G_P,E)$ is an injective module in $R$Mod. Since $R$ is a left locally noetherian ring, by \cite[Ch.  V, Proposition 4.5]{bs}, $R{\rm Hom}_S(G_P,E) = \bigoplus_{j \in A} N_j$, where each $N_j$ is an indecomposable injective module in $R$Mod. It is enough to show that each $N_j$ is a projective unitary left $R$-module.  We know that each $N_j$ is injectively copresented over $P$. Hence by Lemma \ref{c1} and the fact that $R$ has the property $(iii)$, it is sufficient to show that each $N_j$ has a simple essential socle. By using \cite[Lemma 2.7]{zn}, we can see that $E \cong \bigoplus_{j \in A}{\rm Hom}_R(P,N_j)$ as $S$-modules. Since each $N_j$ is injectively copresented over $P$, by Corollary \ref{b3} and \cite[Ch.  V, Proposition 5.1]{bs}, each ${\rm Hom}_R(P,N_j)$ is an indecomposable injective left $S$-module. By Step 1, $S$ is a left artinian ring and so for each $j \in A$, ${\rm Hom}_R(P, N_j) \cong E(S'_j)$ as $S$-modules for some simple left $S$-module $S'_j$. Since $N_j \cong R{\rm Hom}_S(G_P, {\rm Hom}_R(P, N_j)) \cong R{\rm Hom}_S(G_P, E(S'_j))$ as $S$-modules, by Step 2, ${\rm Soc}(N_j) \neq 0$ for each $j \in A$. Each $N_j$ is an indecomposable injective module and hence has a simple essential socle. \\

\textbf{Step 4.} By Corollary \ref{b3}, the functor ${\rm Hom}_R(P,-):R{\rm Hom}_S(G_P,S{\rm Mod}) \rightarrow S$-Mod is an equivalence of categories. It is enough to show that $R{\rm Hom}_S(G_P,S{\rm Mod})={\rm Proj}(R)$. Let $Q$ be a projective unitary left $R$-module. Since $R$ has the properties $(i)$ and $(iv)$, we have an exact sequence $0 \rightarrow Q \rightarrow E_0 \rightarrow E_1$ where each $E_l \in {\rm Proj}(R)$. $R$ is a left locally noetherian ring and so by \cite[Ch.  V, Proposition 4.3]{bs}, each $E_l \in {\rm Inj}(R)$. Hence by using the properties $(i)$ and $(iii)$ each $E_l \in {\rm Add}(\bigoplus_{i=1}^nE(S_i))$. By using Lemma \ref{c1}, it is not difficult to show that each $E_l$ is injectively copresented over $P$ and so by the five lemma $Q$ is injectively copresented over $P$. Now it is enough to show that each object in $R{\rm Hom}_S(G_P,S{\rm Mod})$ is a projective unitary left $R$-module. Let $M$ be injectively copresented over $P$. Then $M \cong R{\rm Hom}_S(G_P, N)$ as $R$-modules for some left $S$-module $N$. Consider the following exact sequence $$0 \rightarrow M \rightarrow R{\rm Hom}_S(G_P, I_0) \rightarrow R{\rm Hom}_S(G_P, I_1),$$
where $0 \rightarrow N \rightarrow I_0 \rightarrow I_1$ is the minimal injective resolution of $N$. By Step 3, each $R{\rm Hom}_S(G_P, I_i)$ is a projective unitary left $R$-module. Since ${\rm l.gl.dim}(R) =0 ~{\rm or}~ 2$, $M$ is a projective unitary left $R$-module. Therefore $R{\rm Hom}_S(G_P,S{\rm Mod})={\rm Proj}(R)$ and the result follows.\\

$(b)$. Since $R$ has the property $(i)$, every projective unitary left $R$-module is a direct sum of finitely generated unitary module with local endomorphism ring. By $(a)$, every left $S$-module is a direct sum of indecomposable left $S$-module. Therefore by \cite[Corollary 2.7]{sp}, $S$ is a left pure semisimple ring.
\end{proof}

We are now in a position to prove our main result in this section.

\begin{The}\label{re1}
There exists a bijection between Morita equivalence classes of left pure semisimple rings $\Lambda$ and Morita equivalence classes of rings $R$ with the properties $(i)$-$(iv)$.
\end{The}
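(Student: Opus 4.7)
The plan is to exhibit the two maps of the bijection and verify that they invert each other on Morita equivalence classes. In one direction, send a left pure semisimple ring $\Lambda$ to its left functor ring $R_{\Lambda}$. Corollary \ref{cc1} shows $R_{\Lambda}$ has properties $(i)$-$(iv)$, and the functor ring construction is known to depend on $\Lambda$-\emph{Mod} only, so $R_{\Lambda}$ is well defined up to Morita equivalence on $\Lambda$. In the other direction, given $R$ with $(i)$-$(iv)$, send it to $S:=\mathrm{End}_R(P)$ where $P=\bigoplus_{i=1}^{n}P_i$ and each $P_i$ is the projective cover of a simple unitary left $R$-module $S_i$. Proposition \ref{c4}(b) shows $S$ is left pure semisimple, and standard Morita theory (applied to basic projective generator-cogenerator data) shows that $S$ is independent, up to Morita equivalence, of the choice of $R$ inside its Morita class.

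For the first composition $\Lambda\longmapsto R_{\Lambda}\longmapsto S$, I would invoke Proposition \ref{r4}(b). What is required is that the module $V=\bigoplus_{\alpha\in J}V_{\alpha}$ used to build $R_{\Lambda}$ is a generator-cogenerator in $\Lambda$-\emph{Mod}. The generator property is immediate since $\Lambda$ itself decomposes into finitely generated indecomposables (every left $\Lambda$-module is such a coproduct because $\Lambda$ is left pure semisimple). For the cogenerator property, note that every left pure semisimple ring is left artinian, hence every injective hull $E(\Lambda e_i/J(\Lambda)e_i)$ of a simple is indecomposable; being indecomposable and a coproduct of finitely presented objects, it equals one of the $V_{\alpha}$. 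Proposition \ref{r4}(b) then immediately gives $S\cong e\Lambda e$, Morita equivalent to $\Lambda$.

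For the second composition $R\longmapsto S\longmapsto R_S$, the main tool is the equivalence $\mathrm{Hom}_R(P,-):\mathrm{Proj}(R)\to S\text{-Mod}$ from Proposition \ref{c4}(a). Since each $e_{\alpha}Re_{\alpha}$ in property $(i)$ is local (and right perfect), every $Re_{\alpha}$ is an indecomposable projective unitary left $R$-module, and under the equivalence the family $\{Re_{\alpha}\}_{\alpha\in J}$ corresponds to a family $\{M_{\alpha}\}_{\alpha\in J}$ of indecomposable left $S$-modules. Because $S$ is left pure semisimple, every $S$-module is a coproduct of finitely generated indecomposables; transporting this through the equivalence shows that every indecomposable finitely generated left $S$-module is isomorphic to some $M_{\alpha}$. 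Thus $\{M_\alpha\}$ exhausts, up to isomorphism (and possibly with repetitions), the full list $\{W_{\beta}\}$ of non-isomorphic finitely generated indecomposable left $S$-modules used to build $R_S$. The equivalence transports endomorphism data as well, yielding
\[
R\;\cong\;\widehat{\mathrm{End}}_R\!\Bigl(\bigoplus_{\alpha\in J}Re_{\alpha}\Bigr)\;\cong\;\widehat{\mathrm{End}}_S\!\Bigl(\bigoplus_{\alpha\in J}M_{\alpha}\Bigr),
\]
and the ring on the right is Morita equivalent to $R_S=\widehat{\mathrm{End}}_S\bigl(\bigoplus_{\beta}W_{\beta}\bigr)$ because their unitary module categories are both equivalent to $\mathrm{Add}(\bigoplus_{\beta}W_{\beta})=S\text{-Mod}$.

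The main obstacle I anticipate is bookkeeping in the second composition: verifying that the preservation of endomorphisms by the equivalence of Proposition \ref{c4}(a) upgrades to an isomorphism of the corresponding $\widehat{\mathrm{End}}$-rings (with enough idempotents), and that the "repetition-collapse" step $\widehat{\mathrm{End}}_S(\bigoplus M_\alpha)\sim_{\mathrm{Morita}}\widehat{\mathrm{End}}_S(\bigoplus W_\beta)$ is justified at the level of rings with enough idempotents rather than unital rings. Once this careful identification is done, together with the well-definedness of both assignments on Morita classes established via \cite{zn} and the functor ring construction, the two maps are mutually inverse and the bijection follows.
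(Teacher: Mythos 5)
Your proposal is correct and follows essentially the same route as the paper: the forward map is $\Lambda\mapsto R_{\Lambda}$ justified by Corollary \ref{cc1}, well-definedness and injectivity come from \cite[Theorem 3.4]{zn} together with the recovery of $\Lambda$ (up to Morita equivalence) as ${\rm End}_R(P)$ via Propositions \ref{r4}(b) and \ref{c4}, and surjectivity is exactly your second composition, which the paper phrases as the chain of equivalences ${\rm Proj}(R)\simeq S\text{-Mod}\simeq{\rm Proj}(R_S)$ followed by \cite[Theorem 3.4]{zn}. Your more explicit bookkeeping through the indecomposables $M_\alpha$ and the generator-cogenerator verification simply spells out steps the paper leaves as ``clear.''
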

\begin{proof}
If $\Lambda$ is a left pure semisimple ring, then by Corollary \ref{cc1}, $R_{\Lambda}$ has the properties $(i)$-$(iv)$. Let $\Lambda$ and $\Lambda'$ be two left pure semisimple rings. If $\Lambda$ and $\Lambda'$ are Morita equivalent, then by using \cite[Theorem 3.4]{zn}, we can see that $R_{\Lambda}$ and $R_{\Lambda'}$ are Morita equivalent. Also, it is clear that if $R_{\Lambda}$ and $R_{\Lambda'}$ are Morita equivalent, then $\Lambda$ and $\Lambda'$ are Morita equivalent. This means that the assignment $\Lambda \mapsto R_{\Lambda}$ sends Morita equivalence classes of left pure semisimple rings to Morita equivalence classes of rings with the properties $(i)$-$(iv)$. Moreover, this shows that this map is injective. Now we show that this map is surjective. Let $R$ be a ring with the properties $(i)$-$(iv)$. Then by Proposition \ref{c4}, there exists an additive equivalence between ${\rm Proj}(R) \rightarrow S$-Mod, where $S$ is a left pure semisimple ring. Hence we have an additive equivalence ${\rm Proj}(R) \rightarrow {\rm Proj}(R_{S})$ which preserves and reflects finitely generated modules. By \cite[Theorem 3.4]{zn}, $R$ and $R_S$ are Morita equivalence and so this map is surjective. Therefore this map is a bijection between Morita equivalence classes of left pure semisimple rings and Morita equivalence classes of rings with the properties $(i)$-$(iv)$.
\end{proof}

As a consequence of Theorem \ref{re1}, and \cite[Theorem 3.1]{wis}, we have the following corollaries.

\begin{Cor}{\rm (See \cite[Corollary 4.7]{Ausla2})}\label{re2}
There exists a bijection between Morita equivalence classes of rings of finite representation type and Morita equivalence classes of left artinian rings $R$ with ${\rm l.gl.dim}R =0 ~{\rm or}~ 2$ and ${\rm l. dom.dom}{R} \geq 2$.
\end{Cor}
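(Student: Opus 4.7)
The plan is to derive Corollary \ref{re2} from Theorem \ref{re1} by restricting the bijection there to those Morita classes that contain a ring with a unit. This restriction should carve out, on the left pure semisimple side, exactly the rings of finite representation type, and, on the side of rings satisfying (i)--(iv), exactly the left artinian rings with ${\rm l.gl.dim}R\in\{0,2\}$ and ${\rm l.dom.dim}R\geq 2$.

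First, I would argue that a left pure semisimple ring $\Lambda$ is of finite representation type precisely when its left functor ring $R_\Lambda = \widehat{{\rm End}}_\Lambda(\bigoplus_{\alpha\in J}V_\alpha)$ is (Morita equivalent to) a unital ring. This follows because $R_\Lambda = \bigoplus_{\alpha\in J}R_\Lambda e_\alpha$ carries an identity exactly when $|J|<\infty$, and invoking \cite[Theorem 3.1]{wis} a left pure semisimple ring with only finitely many non-isomorphic finitely generated indecomposable modules is automatically also right pure semisimple, hence of finite representation type. Second, I would verify that a ring $R$ satisfying (i)--(iv) is Morita equivalent to a unital ring if and only if it is a left artinian ring with ${\rm l.gl.dim}R\in\{0,2\}$ and ${\rm l.dom.dim}R\geq 2$. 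In the forward direction, unital $R$ with (i) has a finite basic set of primitive idempotents $e_1,\dots,e_n$ with $R=\bigoplus_{i=1}^n Re_i$ a finite direct sum of the right perfect corner rings $e_iRe_i$, so $R$ is itself right perfect; combined with the left locally noetherian hypothesis of (ii) (which in the unital case is simply left noetherian), a right perfect left noetherian ring is semiprimary and left noetherian, hence left artinian. Since ${\rm l.dom.dim}$ respects finite direct sums, property (iv) then delivers ${\rm l.dom.dim}R\geq 2$. Conversely, a unital left artinian ring with ${\rm l.gl.dim}R\in\{0,2\}$ and ${\rm l.dom.dim}R\geq 2$ is semiperfect, so its basic primitive idempotents supply (i), (ii) and (iv) at once; (iii) is automatic, since over a left artinian ring every finitely generated injective is a finite direct sum of indecomposable injectives $E(S_l)$, so ${\rm proj}(R)\cap{\rm inj}(R)$ is ${\rm add}$ of the finite sub-family consisting of those $E(S_l)$ that happen to be projective.

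Combining these two restrictions with Theorem \ref{re1} yields the desired bijection. The main obstacle is ensuring that ``$R_\Lambda$ is unital'' is genuinely a Morita-invariant condition that matches ``$\Lambda$ is of finite representation type'' --- this is where \cite[Theorem 3.1]{wis} does the heavy lifting in promoting one-sided pure semisimplicity-with-finitely-many-indecomposables to two-sided pure semisimplicity --- and in confirming that property (iii) is indeed a consequence of the other conditions in the unital left artinian setting, so that it does not need to appear as an extra hypothesis in the classical Auslander formulation.
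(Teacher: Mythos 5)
Your proposal is correct and follows essentially the same route as the paper, which derives this corollary in one line from Theorem \ref{re1} together with \cite[Theorem 3.1]{wis}; you have simply filled in the details (unitality of the functor ring detecting finitely many indecomposables on one side, and the equivalence of (i)--(iv) with the classical Auslander-ring conditions for unital rings on the other, including the observation that (iii) is automatic in the left artinian unital case). The only small point to watch is that the implication actually needed from \cite[Theorem 3.1]{wis} is that a ring of finite representation type is left pure semisimple (so that it lies in the domain of the bijection of Theorem \ref{re1}), rather than the converse direction you emphasize, but this does not affect the correctness of the argument.
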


\begin{Cor}\label{re8}
The following statements are equivalent.
\begin{itemize}
\item[$(a)$] The pure semisimplicity conjecture holds.
\item[$(b)$] Every ring with the properties $(i)$-$(iv)$ is Morita equivalent with an Auslander ring.
\end{itemize}
\end{Cor}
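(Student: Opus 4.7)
The plan is to exploit the two bijections already in place: Theorem \ref{re1} identifies Morita classes of rings $R$ satisfying $(i)$-$(iv)$ with Morita classes of left pure semisimple rings $\Lambda$, while Corollary \ref{re2} identifies Morita classes of Auslander rings with Morita classes of rings of finite representation type. Both bijections proceed via the assignment $\Lambda \mapsto R_{\Lambda}$, and the crucial observation is that $R_\Lambda = \widehat{{\rm End}}_\Lambda(\bigoplus_{\alpha \in J}U_\alpha)$ is unital (and hence left artinian) precisely when the index set $J$ of non-isomorphic finitely generated indecomposable left $\Lambda$-modules is finite, i.e., precisely when $\Lambda$ has finite representation type.

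For $(a) \Rightarrow (b)$, let $R$ satisfy $(i)$-$(iv)$. By Theorem \ref{re1} we may replace $R$ by a Morita equivalent ring of the form $R_\Lambda$ with $\Lambda$ left pure semisimple. Hypothesis $(a)$ forces $\Lambda$ to be of finite representation type, so $J$ is finite and $R_\Lambda$ is unital. Combining Proposition \ref{r4}(b) with Corollary \ref{cc1} shows that $R_\Lambda$ is then left artinian with ${\rm l.gl.dim}\, R_\Lambda \in \{0,2\}$ and ${\rm l.dom.dim}\, R_\Lambda \geq 2$; that is, $R_\Lambda$ is an Auslander ring. Hence $R$ is Morita equivalent to an Auslander ring.

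For $(b) \Rightarrow (a)$, let $\Lambda$ be left pure semisimple. By Corollary \ref{cc1}, $R_\Lambda$ has the properties $(i)$-$(iv)$, so by $(b)$ it is Morita equivalent to some Auslander ring $A$. Since $A$ is left artinian it admits, up to isomorphism, only finitely many simple unitary left modules. Morita equivalence between rings with enough idempotents preserves isomorphism classes of simple unitary modules, so the same finiteness holds for $R_\Lambda$. But by property $(i)$ the simple unitary left $R_\Lambda$-modules are parametrised, via the primitive local idempotents $e_\alpha$, by the non-isomorphic finitely generated indecomposable left $\Lambda$-modules $U_\alpha$. Therefore $\Lambda$ has, up to isomorphism, only finitely many finitely generated indecomposable left modules and is of finite representation type, which yields the pure semisimplicity conjecture.

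The main obstacle is the implication $(a) \Rightarrow (b)$: once one knows that $\Lambda$ has finite representation type and hence that $R_\Lambda$ is unital, one must translate the non-unital homological data $(i)$-$(iv)$ into the classical unital conditions defining an Auslander ring. This is achieved by Proposition \ref{r4}(b) together with the global-dimension argument of Corollary \ref{cc1}. The reverse direction is then a soft counting argument comparing isomorphism classes of simple unitary modules on the two sides of the Morita equivalence.
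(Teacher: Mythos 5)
Your proposal is correct and follows essentially the same route as the paper, which derives this corollary directly from the bijection of Theorem \ref{re1} together with the Auslander correspondence of Corollary \ref{re2} (via Wisbauer's theorem), the key point in both being that $R_\Lambda$ is unital and left artinian, hence an Auslander ring, exactly when $\Lambda$ has finite representation type. Your write-up merely makes explicit the details (finiteness of the index set $J$, preservation of simple unitary modules under Morita equivalence) that the paper leaves to the reader.
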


\section{Ringel-Tachikawa Correspondence for left pure semisimple rings}
In this section we extend the Ringel-Tachikawa correspondence to the class of left pure semisimple rings. Let $\Lambda$ be a basic left pure semisimple ring. We show that the right functor ring of the endomorphism ring of minimal injective cogenerator of $\Lambda$-{\rm Mod} has nice properties (see Proposition \ref{l7}). Then we show that these properties characterize left pure semisimple rings up to Morita equivalence (see Theorem \ref{rt1}). This result extends the Ringel-Tachikawa correspondence to the class of left pure semisimple rings. As a consequence we provide another equivalent statement for the pure semisimplicity conjecture (see Corollary \ref{rtt2}).\\

Let $R$ be a ring and $N$ be a unitary left $R$-submodule of a module $M$. Then $M$ is called a {\it rational extension} of $N$ if for every module $D$ with $N \subseteq D \subseteq M$ and every $R$-homomorphism $f: D \rightarrow M$, the inclusion $N \hookrightarrow {\rm Ker}f$ implies $f = 0$ (see \cite[p. 58]{ca}).  Assume that $R$ is a subring of a ring $Q$. Then $Q$ is called a {\it left quotient ring} of $R$ if $Q$ is a rational extension of $R$ as left $R$-modules (see \cite[p. 64]{ca}). \\

Let $\mathscr{A}$ be a Grothendieck category with a generating set $\lbrace P_{\alpha}~|~\alpha \in J\rbrace$ of projective objects. We recall from \cite[p. 359]{ha3} that an object $M$ in $\mathscr{A}$ is called {\it faithful} if for any non-zero morphism $f: P_{\alpha} \rightarrow P_{\beta}$ there exists a morphism $g: P_{\beta} \rightarrow M$ such that $fg \neq 0$. Harada in \cite{ha3} showed that the faithfulness of $M$ dose not depend on generating sets of projectives. Let $R$ be a ring with enough idempotents. It is easy to see that a unitary left $R$-module $M$ is a faithful object in $R$Mod if and only if ${\rm ann}_R(M)=0$ (i.e., $M$ is a faithful left $R$-module).

\begin{Lem}\label{l1}
Let $R$ be a ring with enough idempotents and $fR$, $f^2=f \in R$, be a faithful right ideal of $R$. Then
\begin{itemize}
\item[$(a)$] $R{\rm End}_{fRf}(fR)$ is a left quotient ring of $R$.
\item[$(b)$] If $Re$, $e^2=e \in R$, is an injective module in $R$Mod, then $R{\rm Hom}_{fRf}(fR, fRe) \cong Re$ as $R$-modules. In particular, $fRe$ is an injective left $fRf$-module.
\end{itemize}
\end{Lem}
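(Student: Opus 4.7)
For (a), I first construct the natural ring map $\iota : R \to Q := R{\rm End}_{fRf}(fR)$ sending $r$ to the right-multiplication operator $R_r : fs \mapsto fsr$. Under the paper's right-composition convention this satisfies $R_r R_s = R_{rs}$, and the image consists of left $fRf$-linear endomorphisms because left multiplication by $fRf$ and right multiplication by elements of $R$ on $fR$ commute. Faithfulness of $fR$ as a right $R$-module gives $\ker \iota = 0$. The technical heart of (a) is the identity
\[
R_m \cdot q \; = \; R_{mq} \qquad \text{in } Q, \qquad m \in fR,\; q \in Q,
\]
where $mq$ denotes the image of $m$ under the endomorphism $q$. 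To verify this, write $m = fs$ and compute the action on a generic $fu \in fR$: the left $fRf$-linearity of $q$ applied to the decomposition $fufs = (fuf) \cdot (fs)$ yields $(fufs)q = fuf \cdot (fs)q = fu \cdot (fs)q$, the final equality using idempotency of $f$ and the fact that $(fs)q \in fR$. Granted this identity, rationality is immediate: if $R \subseteq D \subseteq Q$ and $g : D \to Q$ is left $R$-linear with $g|_R = 0$, then for every $q \in D$ and $m \in fR$ we have $R_m q = R_{mq} \in \iota(R)$, so $R_m \cdot g(q) = g(R_m \cdot q) = 0$. Applying the identity again gives $R_{m\, g(q)} = 0$, and injectivity of $\iota$ forces $m \cdot g(q) = 0$ for all $m \in fR$, hence $g(q) = 0$.

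For (b), my plan is to reduce the first assertion to (a) by identifying $R{\rm Hom}_{fRf}(fR, fRe)$ with the right ideal $Qe$ of $Q$ as left $R$-modules. A left $fRf$-linear map $\phi : fR \to fRe$ composed with the inclusion $fRe \hookrightarrow fR$ gives an element $\widetilde{\phi} \in Q$ satisfying $\widetilde{\phi} R_e = \widetilde{\phi}$, so $\widetilde{\phi} \in Qe$; conversely, every $q \in Qe$ sends $fR$ into $fRe$ and so corresponds to a map in ${\rm Hom}_{fRf}(fR, fRe)$. This identification is left $R$-linear because the $R$-action on the Hom-set induced by the right $R$-action on the source $fR$ matches left multiplication in $Q$ via $\iota$. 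By (a), $Q$ is essential over $R$, so $Qe$ is essential over $Re$ as a left $R$-module: any nonzero left $R$-submodule $N \subseteq Qe$ meets $R$ by essentiality, and for $r \in N \cap R$, writing $r = qe$ forces $re = qe^2 = r$, so $r \in Re$. Since $Re$ is injective in $R{\rm Mod}$ it has no proper essential extension; hence $Qe = Re$ and $R{\rm Hom}_{fRf}(fR, fRe) \cong Re$ as left $R$-modules.

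For the ``in particular'' assertion, I plan to deduce injectivity of $fRe$ in $fRf$-Mod from the adjunction $(fR \otimes_R -,\; R{\rm Hom}_{fRf}(fR, -))$ between $R{\rm Mod}$ and $fRf$-Mod. The left adjoint is exact because $fR$ is projective (hence flat) as a right $R$-module, and the counit $fR \otimes_R Re \to fRe$ is an isomorphism by direct computation; combined with injectivity of $Re$ in $R{\rm Mod}$ and the isomorphism of the first part, this forces any essential extension of $fRe$ in $fRf$-Mod to split, yielding the desired injectivity. The principal obstacle I anticipate is the bookkeeping forced by the ``enough idempotents'' setting --- ensuring that the identification $R{\rm Hom}_{fRf}(fR, fRe) \cong Qe$ really is an isomorphism of \emph{unitary} left $R$-modules without stray non-unitary contributions, and that the essentiality arguments survive the absence of a global unit in $R$.
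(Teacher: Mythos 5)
Your proposal follows essentially the same route as the paper: the key identity $\rho_m q=\rho_{(m)q}$ for $m\in fR$ drives part (a), part (b) is obtained by identifying $R{\rm Hom}_{fRf}(fR,fRe)$ with $R{\rm End}_{fRf}(fR)e$ and using essentiality of $\rho(R)$ together with injectivity of $Re$, and the final claim uses the adjunction $(fR\otimes_R-,\,R{\rm Hom}_{fRf}(fR,-))$ with the counit being a natural isomorphism. The only place you are sketchier than you should be is the last step, where one needs the counit to be an isomorphism on \emph{all} unitary left $fRf$-modules (in particular on an arbitrary essential extension of $fRe$), not just the instance $fR\otimes_R Re\to fRe$; this is true and is exactly what the paper also asserts, so your argument is correct at the same level of detail.
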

\begin{proof}
$(a)$. Since $fR$ is faithful, the canonical ring homomorphism $\rho: R \rightarrow R{\rm End}_{fRf}(fR)$ via $r \mapsto [\rho_r: x \mapsto xr]$ is a monomorphism. If $d \in R{\rm End}_{fRf}(fR)$ and $x \in fR$ such that $(x)d \neq 0$, then $\rho_xd=\rho_{(x)d} \neq 0$. Hence the result holds.\\
$(b)$. It is easy to see that the mapping $\varphi: R{\rm Hom}_{fRf}(fR, fRe) \rightarrow R{\rm End}_{fRf}(fR)e$ via $\alpha \mapsto \alpha \varepsilon e$ is an isomorphism of $R$-modules, where $\varepsilon: fRe \rightarrow fR$ is the canonical injection. Also since $e$ is an idempotent of $R$ and $fR$ is faithful, we can see that $Re \cong {\rm Im}\rho|_{Re} = {\rm Im}\rho e$ as $R$-modules. But because $R{\rm End}_{fRf}(fR)$ is a left quotient ring of $R$, ${\rm Im}\rho$ is an essential ${\rm Im}\rho$-submodule of $R{\rm End}_{fRf}(fR)$. This implies that ${\rm Im}\rho e$ is an essential $R$-submodule of $R{\rm End}_{fRf}(fR)e$  and so ${\rm Im}\rho e = R{\rm End}_{fRf}(fR)e$ because $Re$ is injective in $R$Mod. Therefore $R{\rm Hom}_{fRf}(fR, fRe) \cong Re$ as $R$-modules. Since $fR$ is a projective unitary right $R$-module, the functor $fR \otimes_R-$ sends $R$-module monomorphisms to $fRf$-module monomorphisms. Since $Re$ is injective module in $R$Mod and the natural transformation
\begin{center}
$\eta: fR \otimes_R R{\rm Hom}_{fRf}(fR,-) \rightarrow {\rm id}$ via $fr \otimes \alpha \mapsto (fr)\alpha$
\end{center}
for $r \in R$, $\alpha \in R{\rm Hom}_{fRf}(fR,X)$, where $X$ is a left $fRf$-module, is an equivalence, $fRe$ is an injective left $fRf$-module. Therefore the result follows.
\end{proof}

Let $\mathscr{A}$ be a Grothendieck category with a generating set of projective objects. An object $M$ in $\mathscr{A}$ is called a {\it minimal faithful} if $M$ is faithful and for every faithful object $N$ in $\mathscr{A}$, there exists an object $K$ in $\mathscr{A}$ such that $N \cong M \bigoplus K$ (see \cite[p. 360]{ha3}).  Fuller in \cite[Proposition 3.2]{f} showed that Colby and Rutter's characterization of the minimal faithful modules over a ring with identity \cite{cr} generalizes to a ring with enough idempotents. He proved a unitary left $R$-module $M$ is finitely generated minimal faithful if and only if there are finitely many pairwise non-isomorphic simple left ideals $\lbrace S_1, \cdots, S_n\rbrace$ such that $M \cong \bigoplus_{i=1}^n E(S_i)$ is faithful, projective, and injective in $R$Mod.

Thus, the finitely generated minimal faithful unitary left module of a semiperfect ring $R$ with enough idempotents is always isomorphic to a left ideal of the form $Re=\oplus_{i=1}^nRe_i$ where each $Re_i$ is injective in $R$Mod and it has simple essential socle and for each $i \neq j$, $Re_i \ncong Re_j$ as $R$-modules, where each $e_i$ is a local orthogonal idempotent in $R$ and $e=e_1 + \cdots +e_n$.

\begin{Pro}\label{l2}
Let $R$ be a semiperfect ring with enough idempotents and assume that $fR=\oplus_{j=1}^mf_jR$ and $Re = \oplus_{i=1}^nRe_i$ are minimal faithful right and left ideals of $R$, respectively. Then
\begin{itemize}
\item[$(a)$] $n=m$, ${\rm Soc}(Re_i) \cong {\rm top}(Rf_{\sigma(i)})$ and ${\rm Soc}(f_{\sigma(i)}R) \cong {\rm top}(e_iR)$ as $R$-modules, where $\sigma$ is a permutation of $\lbrace 1, \cdots, n \rbrace$.
\item[$(b)$] $fRe$ is a cogenerator in $fRf$-Mod.
\end{itemize}
\end{Pro}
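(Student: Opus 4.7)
The plan is to construct the permutation $\sigma$ via faithfulness of the minimal faithful modules, then upgrade it to the full Nakayama correspondence by analyzing the bimodule $fRe$ with the Schur functor and a balanced-bimodule argument; part (b) drops out as a corollary.

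For each $i$, let $S_i={\rm Soc}(Re_i)$. I first show $fS_i\neq 0$: since $S_i$ is a simple left ideal with $RS_i=S_i$, the vanishing $fS_i=0$ would give $fRS_i=fS_i=0$, forcing $S_i\subseteq {\rm ann}_R(fR)=0$ by faithfulness of $fR$, a contradiction. So some $f_kS_i\neq 0$, and the corresponding nonzero morphism $Rf_k\to S_i$ is surjective onto the simple module $S_i$, giving $S_i\cong {\rm top}(Rf_k)$; set $\sigma(i)=k$. The $S_i$ are pairwise non-isomorphic, as are the tops ${\rm top}(Rf_1),\ldots,{\rm top}(Rf_m)$, so $\sigma$ is well-defined and injective. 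The symmetric argument using faithfulness of $Re$ produces an injection $\tau\colon\{1,\ldots,m\}\to\{1,\ldots,n\}$ with ${\rm Soc}(f_jR)\cong {\rm top}(e_{\tau(j)}R)$; hence $n=m$ and both $\sigma,\tau$ are bijections.

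To prove $\tau=\sigma^{-1}$ I apply the Schur functor $F\colon R{\rm Mod}\to fRf{\rm Mod}$, $F(M)=fM\cong \Hom_R(Rf,M)$. By Lemma~\ref{l1}(b), each $F(Re_i)=fRe_i$ is injective in $fRf{\rm Mod}$. A standard calculation---using $u=fu$ for $u\in fM$, so that $fru=(frf)u$---shows $F$ sends the simple $S_i$ (which satisfies $fS_i\neq 0$) to the simple $fRf$-module $fS_i$, and essentiality of $S_i$ in $Re_i$ transfers to essentiality of $fS_i$ in $fRe_i$: given $u\in fRe_i\setminus\{0\}$, simplicity of $S_i$ gives $S_i\subseteq Ru$, and choosing $s\in S_i$ with $fs\neq 0$ yields a nonzero element $fs\in fRf\cdot u\cap fS_i$. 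Since $f_kS_i\neq 0$ precisely when $k=\sigma(i)$, the $fRf$-action on $fS_i$ factors through $f_{\sigma(i)}Rf_{\sigma(i)}$, so $fS_i$ identifies with the simple $fRf$-module $V_{\sigma(i)}$ at the primitive idempotent $f_{\sigma(i)}$. Therefore $fRe\cong\bigoplus_i E_{fRf}(V_{\sigma(i)})$. Running the dual argument with $G(N)=Ne$ yields $fRe\cong\bigoplus_j E_{eRe^{{\rm op}}}(V'_{\tau(j)})$ as right $eRe$-module. The bimodule $fRe$ is balanced: the canonical map $eRe\to {\rm End}_{fRf}(fRe)^{{\rm op}}$ given by right multiplication is injective by faithfulness of $Re$, and surjectivity follows from a density argument exploiting the injective-cogenerator property just established. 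Under the Morita-type duality $\Hom_{fRf}(-,fRe)$ the indecomposable injective left and right summands of $fRe$ correspond, and tracking the matching of simple socles $V_{\sigma(i)}\leftrightarrow V'_{\tau(\sigma(i))}$ forces $\tau(\sigma(i))=i$, yielding the second isomorphism in (a).

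Part (b) is now immediate: since $\sigma$ is a bijection of $\{1,\ldots,n\}$ and $\{V_1,\ldots,V_m\}$ is a complete set of simple $fRf$-modules, $fRe\cong\bigoplus_{j=1}^m E_{fRf}(V_j)$ is the minimal injective cogenerator of $fRf{\rm Mod}$. I expect the balanced-bimodule step---both the density argument for ${\rm End}_{fRf}(fRe)=eRe$ and the transport of the indecomposable-summand decomposition across the Morita duality---to be the main technical obstacle, as both require careful adaptation to the non-unital enough-idempotents semiperfect setting.
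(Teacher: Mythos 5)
Your first paragraph and your proof of (b) are sound, and (b) indeed only needs $\sigma$ to be a bijection, which you do establish. The genuine gap is the step $\tau=\sigma^{-1}$, which is exactly the content that makes part (a) nontrivial: the proposition asserts that the \emph{same} permutation realizes both ${\rm Soc}(Re_i)\cong{\rm top}(Rf_{\sigma(i)})$ and ${\rm Soc}(f_{\sigma(i)}R)\cong{\rm top}(e_iR)$, whereas your first paragraph only produces two a priori unrelated bijections. The mechanism you propose to relate them --- that $fRe$ is a balanced $(fRf,eRe)$-bimodule inducing a Morita-type duality under which the indecomposable summands on the two sides match up socle-to-socle --- is asserted rather than proved. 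Surjectivity of $eRe\to{\rm End}_{fRf}(fRe)^{\rm op}$ is a double-centralizer property of a minimal faithful module; for QF-3 rings this is delicate (over rings with identity it is tied to being a QF-3 \emph{maximal quotient} ring, cf.\ the discussion preceding Corollary \ref{rt2}), and here $R$ is only semiperfect with enough idempotents, so there is no artinian or linear-compactness hypothesis available to run a density argument or to guarantee that ${\rm Hom}_{fRf}(-,fRe)$ behaves like a Morita duality on the relevant modules. Even granting balancedness, the claim that the duality matches $fRe_i$ (left socle $V_{\sigma(i)}$) with $f_{\sigma(i)}Re$ (right socle $V'_{\tau(\sigma(i))}$) so as to force $\tau(\sigma(i))=i$ is not carried out. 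As written, the second isomorphism in (a) is therefore unproved.

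The paper closes exactly this gap by an elementary element-level argument that creates the left-right linkage directly, with no duality. Write ${\rm Soc}(Re_i)=Ra_i$. Injectivity of $Re_i$ forces $a_iR$ to be the unique simple submodule of ${\rm r.ann}_R({\rm l.ann}_R(a_i))$, and the epimorphism $e_ir\mapsto a_ir$ (with $e_i$ local) gives $a_iR\cong{\rm top}(e_iR)$; faithfulness plus injectivity of $fR$ then embeds $a_iR$ as ${\rm Soc}(f_jR)=b_jR$ for some $j$, giving ${\rm top}(e_iR)\cong{\rm Soc}(f_jR)$. The \emph{same} element $a_i$ is then fed back to the left side: applying ${\rm Hom}_R(-,R)$ to $a_iR\cong b_jR$ yields ${\rm l.ann}_R({\rm r.ann}_R(a_i))\cong{\rm l.ann}_R({\rm r.ann}_R(b_j))$, and since $Ra_i$ is a simple submodule of the left-hand side while $Rb_j\cong{\rm top}(Rf_j)$ is the unique simple submodule of the right-hand side, one concludes ${\rm Soc}(Re_i)=Ra_i\cong{\rm top}(Rf_j)$ for the same index $j$. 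You could graft this annihilator argument onto your setup to replace the duality step; otherwise you would have to actually prove the double-centralizer and summand-matching claims, which in this generality is likely harder than the proposition itself.
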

\begin{proof}
$(a)$. Since each $Re_i$ has a simple essential socle, ${\rm Soc}(Re_i)=Ra_i$ for some $a_i \in R$. Since $Re_i$ is injective in $R$Mod, $a_iR$ is a unique simple submodule of ${\rm r.ann}_R({\rm l.ann}_R(a_i))$. Since $a_i \in Re_i$, the mapping $\varphi: e_iR \rightarrow a_iR$ via $e_ir \mapsto a_ir$ is well-defined and it is easy to see that it is an $R$-module epimorphism. Since $a_iR$ is simple and $e_i$ is a local idempotent, ${\rm Ker}\varphi={\rm rad}(e_iR)$ and so ${\rm top}(e_iR) \cong a_iR$ as $R$-modules. Similarly we can see that ${\rm Soc}(f_jR)=b_jR$ for some $b_j \in R$, $Rb_j$ is the unique simple submodule of ${\rm l.ann}_R({\rm r.ann}_R(b_j))$ and ${\rm top}(Rf_j) \cong Rb_j$ as $R$-modules. Now we show that for each $i,~(1 \leq i \leq n)$, there exists $1 \leq j \leq m$ such that ${\rm Soc}(Re_i) \cong {\rm top}(Rf_j)$ and ${\rm Soc}(f_jR) \cong {\rm top}(e_iR)$ as $R$-modules. Since $fR$ is faithful, there is an $R$-module monomorphism $R \rightarrow \widehat{\prod}_A fR$, where $A$ is a set. This implies that there is an $R$-module homomorphism $\beta: R \rightarrow fR$ such that $\beta|_{a_iR} \neq 0$. Since $a_iR$ is simple, $\beta|_{a_iR}: a_iR \rightarrow fR$ is a monomorphism. Since $fR$ is injective in Mod$R$, there is an $R$-module monomorphism $E(a_iR) \rightarrow fR$. This yields that $E(a_iR) \cong f_jR$ for some $1 \leq j \leq m$. Therefore $a_iR \cong {\rm Soc}(f_jR)$ as $R$-modules and so ${\rm top}(e_iR) \cong {\rm Soc}(f_jR)$ as $R$-modules.  Now we show that ${\rm Soc}(Re_i) \cong {\rm top}(Rf_j)$ as $R$-modules. Since $a_iR \cong {\rm Soc}(f_jR)=b_jR$ as $R$-modules, $R{\rm Hom}_R(a_iR, R) \cong R{\rm Hom}_R(b_jR, R)$ as $R$-modules. It is not difficult to see that the mapping $R{\rm Hom}_R(a_iR, R) \rightarrow {\rm l.ann}_R({\rm r.ann}_R(a_i))$ via $\alpha \mapsto \alpha(a_i)$ is an $R$-module isomorphism. Similarly $R{\rm Hom}_R(b_jR, R) \cong {\rm l.ann}_R({\rm r.ann}_R(b_j))$ as $R$-modules. Therefore ${\rm l.ann}_R({\rm r.ann}_R(b_j)) \cong {\rm l.ann}_R({\rm r.ann}_R(a_i))$ as $R$-modules. This yields $Rb_j \cong {\rm Soc}({\rm l.ann}_R({\rm r.ann}_R(a_i)))$ as $R$-modules. Since $Ra_i {\rm r.ann}_R(a_i)=0$, $Ra_i \subseteq {\rm l.ann}_R({\rm r.ann}_R(a_i))$. Since $Ra_i$ is simple, ${\rm Soc}(Re_i) \cong {\rm top}(Rf_j)$ as $R$-modules. Consequently for each $i,~(1 \leq i \leq n)$, there is  $1 \leq j \leq m$, such that ${\rm Soc}(Re_i) \cong {\rm top}(Rf_j)$ and ${\rm Soc}(f_jR) \cong {\rm top}(e_iR)$ as $R$-modules.  Similarly we can see that for each $l,~(1 \leq l \leq m)$, there is $1 \leq k \leq n$ such that ${\rm Soc}(Re_k) \cong {\rm top}(Rf_l)$ and ${\rm Soc}(f_lR) \cong {\rm top}(e_kR)$ as $R$-modules. This shows that the result follows. \\

$(b)$. Since $Re=\bigoplus_{i=1}^nRe_i$, $fRe=\bigoplus_{i=1}^nfRe_i$. By Lemma \ref{l1}, $fRe$ is an injective left $fRf$-module and hence  each $fRe_i$ is an injective left $fRf$-module. Also by using Lemma \ref{l1}, it is not difficult to see that ${\rm End}_R(Re_i) \cong {\rm End}_{fRf}(fRe_i)$ as rings. Since ${\rm End}_R(Re_i)$ is a local ring, ${\rm End}_{fRf}(fRe_i)$ is a local ring and so each $fRe_i$ is an indecomposable injective left $fRf$-module.  Now we show that ${\rm Soc}(fRe_i) \neq 0$. By $(a)$, ${\rm Soc}(Re_i) \cong {\rm top}(Rf_{\sigma(i)})$ as $R$-modules, where $\sigma$ is a permutation of $\lbrace 1, \cdots, n \rbrace$. Hence $f{\rm Soc}(Re_i) \cong f{\rm top}(Rf_{\sigma(i)})$ as $fRf$-modules. If $f{\rm Soc}(Re_i)=0$, then $f{\rm top}(Rf_{\sigma(i)})=0$ and so $f_{\sigma(i)} \in {\rm rad}(Rf_{\sigma(i)})$. Hence ${\rm Soc}(Re_i)=0$ which is a contradiction. Thus $f{\rm Soc}(Re_i)\neq 0$ and we can easily see that it is a simple $fRf$-submodule of $fRe_i$. Therefore ${\rm Soc}(fRe_i) \neq 0$. But because $fRe_i$ is an indecomposable injective left $fRf$-module, $fRe_i$ has a simple essential socle. On the other hand, since ${\rm Soc}(fRe)=\bigoplus_{i=1}^n{\rm Soc}(fRe_i)$, it is enough to show that $\lbrace {\rm Soc}(fRe_1), \cdots, {\rm Soc}(fRe_n)\rbrace$ is a complete set of non-isomorphic simple left $fRf$-modules. Since $\lbrace fRf_1/{\rm rad}(fRf_1), \cdots, fRf_n/{\rm rad}(fRf_n)\rbrace$ is a complete set of non-isomorphic simple left $fRf$-modules, it is sufficient to show that ${\rm Soc}(fRe_i) \cong fRf_{\sigma(i)}/{\rm rad}(fRf_{\sigma(i)})$ as $fRf$-modules. It is not difficult to see that $f{\rm top}(Rf_{\sigma(i)}) \cong fRf_{\sigma(i)}/f{\rm rad}(Rf_{\sigma(i)})$ as $fRf$-modules. But since $f{\rm top}(Rf_{\sigma(i)}) \cong f{\rm Soc}(Re_i)={\rm Soc}(fRe_i)$ as $fRf$-modules and $fRe_i$ has a simple socle, ${\rm rad}(fRf_{\sigma(i)}) = f{\rm rad}(Rf_{\sigma(i)})$. This implies that $f{\rm top}(Rf_{\sigma(i)}) \cong {\rm top}(fRf_{\sigma(i)})$ as $fRf$-modules. Therefore ${\rm Soc}(fRe_i) \cong {\rm top}(fRf_{\sigma(i)})$ as $fRf$-modules and the result follows.
\end{proof}

Let $R$ be a ring with enough idempotents and $M$ be a unitary right $R$-module. According to \cite[p. 50]{rt}, the module $M$ is called {\it balanced} if the canonical ring homomorphism $\rho: R \rightarrow R{\rm End}_{S}(M)$ via $r \mapsto [\rho_r: x \mapsto xr]$ is surjective, where $S={\rm End}_R(M)$. The ring $R$ is called {\it right locally coherent} if every direct product of flat unitary left $R$-modules in $R$Mod is flat (see \cite[p. 118]{gs}). \\

Let $\mathscr{A}$ be a Grothendieck category with a generating set of projective objects. According to \cite[p. 102]{f}, $\mathscr{A}$ is called {\it QF-3} if it contains a finitely generated minimal faithful object. A ring $R$ with enough idempotents is called {\it left QF-3} if the category $R$Mod is a QF-3 category. Right and two-sided QF-3 rings defined analogously.

\begin{Pro}\label{l5}
Let $R$ be a left perfect right locally coherent QF-3 ring with enough idempotents such that ${\rm l.gl.dim}R = 0 ~{\rm or} ~ 2$. Assume that $fR$, $f^2=f \in R$, is a minimal faithful balanced right ideal of $R$. Then
\begin{itemize}
\item[$(a)$] $fRf$ is a left pure semisimple ring.
\item[$(b)$] The functor
\begin{center}
$R{\rm Hom}_{fRf}(fR,-): fRf$-Mod $\rightarrow {\rm Proj}(R)$
\end{center}
is an additive equivalence.
\end{itemize}
\end{Pro}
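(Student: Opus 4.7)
The plan is to verify that $R$ satisfies the four properties (i)--(iv) of Theorem~\ref{re1} and then to invoke Proposition~\ref{c4} after matching the data via Proposition~\ref{l2}. Property~(i) is immediate from $R$ being left perfect with enough idempotents: a complete set $\{e_{\alpha}\}$ of pairwise orthogonal primitive local idempotents exists, and the left $T$-nilpotency of $J(R)$ forces each local ring $e_{\alpha}Re_{\alpha}$ to be right perfect. For~(iii), the left QF-3 assumption supplies a minimal faithful left ideal $Re'=\bigoplus_{i=1}^{n}E(S_{i})$ whose indecomposable summands are finitely generated projective-injective, and the minimal faithful property forces every finitely generated projective-injective unitary left module to lie in ${\rm add}(\bigoplus_{i}E(S_{i}))$. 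For~(iv), the balanced condition $R={\rm End}_{fRf}(fR)$ together with Proposition~\ref{l2}(a) shows that every simple left $R$-module is isomorphic to ${\rm top}(Rf_{j})$ for some $j$, hence to some $S_{i}$; thus $\bigoplus_{i}E(S_{i})$ is a cogenerator in $R$Mod and each $Re_{\alpha}$ admits the required injective copresentation inside ${\rm Add}(\bigoplus_{i}E(S_{i}))\subseteq{\rm Proj}(R)$.

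The main obstacle is~(ii). The dimension condition is given by hypothesis, but left local noetherianity is the delicate step, and it is here that the right local coherence enters crucially. The plan is to combine right local coherence (direct products of flat unitary left $R$-modules are flat) with left perfection (flats are projectives, so arbitrary products of projective unitary left modules are projective) and with the injective copresentation by projective-injectives from~(iv), so as to extract the ascending chain condition on submodules of each $Re_{\alpha}$ and thereby conclude that $R$ is left locally noetherian.

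With (i)--(iv) established, Proposition~\ref{c4} applies, taking $P=\bigoplus_{i=1}^{n}P_{i}$ with each $P_{i}$ a projective cover of $S_{i}$. By Proposition~\ref{l2}(a), $S_{i}\cong{\rm top}(Rf_{\sigma(i)})$, so $P_{i}\cong Rf_{\sigma(i)}$ and $P\cong Rf$; hence $S={\rm End}_{R}(Rf)\cong fRf$, and Proposition~\ref{c4}(b) delivers assertion~(a). For~(b), Proposition~\ref{c4}(a) yields the equivalence ${\rm Hom}_{R}(Rf,-):{\rm Proj}(R)\rightarrow fRf$-Mod. The natural isomorphism ${\rm Hom}_{R}(Rf,M)\cong fM\cong fR\otimes_{R}M$, valid for every unitary left $R$-module $M$, identifies this equivalence with $fR\otimes_{R}-$, which is the left adjoint of $R{\rm Hom}_{fRf}(fR,-)$. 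Because an equivalence is quasi-inverted by both its left and its right adjoint, one obtains a natural isomorphism $R{\rm Hom}_{fRf}(fR,-)\cong Rf\otimes_{fRf}-$ and exhibits $R{\rm Hom}_{fRf}(fR,-):fRf$-Mod$\rightarrow{\rm Proj}(R)$ as the quasi-inverse equivalence, proving~(b).
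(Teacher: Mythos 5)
Your strategy---verify properties $(i)$--$(iv)$ for $R$ itself and then feed the data into Proposition \ref{c4}---is genuinely different from the paper's proof, which never establishes $(i)$--$(iv)$ for $R$. The paper argues directly: for a left $fRf$-module $M$ it uses Proposition \ref{l2}(b) to copresent $M$ by products of the cogenerator $fRe$, applies $R{\rm Hom}_{fRf}(fR,-)$ and Lemma \ref{l1} to get an exact sequence $0\rightarrow R{\rm Hom}_{fRf}(fR,M)\rightarrow \widehat{\prod}_I Re\rightarrow \widehat{\prod}_J Re$, and observes that the two products are projective (right local coherence makes them flat, left perfectness makes them projective), so the kernel is projective because ${\rm l.gl.dim}R=0$ or $2$. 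This decomposes $M$ into indecomposables $fRe_\alpha$ with local endomorphism rings and gives $(a)$ via Krause's criterion, with no noetherian hypothesis needed anywhere. Your route, by contrast, has two genuine gaps. First, property $(ii)$: you explicitly flag left local noetherianity as "the delicate step" but then only announce a plan ("combine right local coherence ... with left perfection ... so as to extract the ascending chain condition"); no actual argument is given, and it is not at all clear how ACC on submodules of $Re_\alpha$ follows from those ingredients. Since Proposition \ref{c4} uses left local noetherianity essentially (Step 1 to get $S$ left noetherian, Step 3 to decompose injectives), this is not a cosmetic omission.

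Second, your argument for $(iv)$ rests on the claim that "every simple left $R$-module is isomorphic to ${\rm top}(Rf_j)$ for some $j$, hence to some $S_i$; thus $\bigoplus_i E(S_i)$ is a cogenerator in $R{\rm Mod}$." This is false in general. Proposition \ref{l2}(a) only matches the $n$ socles of the summands of the minimal faithful \emph{left} ideal with the $n$ tops of the summands of the minimal faithful \emph{right} ideal; it says nothing about the remaining simple left $R$-modules. Already for the Auslander algebra of a non-semisimple representation-finite algebra $\Lambda$ there are as many simples as indecomposable $\Lambda$-modules, while only $n$ of them (those corresponding to the indecomposable injectives) occur as socles of the projective-injective summands, so $\bigoplus_{i=1}^n E(S_i)$ is faithful but not a cogenerator. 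What faithfulness actually buys is an embedding of each $Re_\alpha$ into a product $\widehat{\prod}Re$ of copies of the minimal faithful left ideal (projective by the coherence-plus-perfectness argument), which gives the first term of an injective copresentation; getting the second term projective requires a further argument that you do not supply. Your final identification $P\cong Rf$, $S\cong fRf$ via Proposition \ref{l2}(a), and the adjunction argument deducing $(b)$ from $(a)$ of Proposition \ref{c4}, are fine, but they sit on top of the unproven hypotheses $(ii)$ and $(iv)$.
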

\begin{proof}
$(a)$. Let $M$ be a left $fRf$-module and $Re$, $e^2=e\in R$, be a minimal faithful left ideal of $R$. By Proposition \ref{l2}(b), $fRe$ is a cogenerator in $fRf$-Mod. Then there exists an exact sequence in $fRf$-Mod
$$0 \rightarrow M \rightarrow \prod_{I} {fRe} \rightarrow \prod_{J} {fRe},$$
 where $I$ and $J$ are two index sets. By using Proposition \ref{l1}, we can see that $$R{\rm Hom}_{fRf}(fR,\prod_{I} fRe) \cong \widehat{\prod_{I}} Re$$ as $R$-modules. This gives us the following exact sequence
 $$0 \rightarrow R{\rm Hom}_{fRf}(fR,M) \rightarrow \widehat{\prod_{I}}Re \rightarrow \widehat{\prod_{J}}Re$$
Since $R$ is a left perfect right locally coherent ring and ${\rm l.gl.dim}R =0 ~{\rm or}~ 2$, $R{\rm Hom}_{fRf}(fR,M) \in {\rm Proj}(R)$. Since $R$ is a semiperfect ring with enough idempotents, there is a complete set $\lbrace e_{\alpha}~|~\alpha \in J \rbrace$ of pairwise orthogonal local idempotents of $R$ such that $R=\bigoplus_{\alpha \in J}Re_{\alpha}=\bigoplus_{\alpha \in J}e_{\alpha}R$. Hence $R{\rm Hom}_{fRf}(fR,M) \cong \bigoplus_{\alpha \in A}Re_{\alpha}$ as $R$-modules. There are $fRf$-module isomorphisms
$$M \cong fR \otimes_R R{\rm Hom}_{fRf}(fR,M) \cong fR \otimes_R (\bigoplus_{\alpha \in A}Re_{\alpha}) \cong \bigoplus_{\alpha \in A} (fR \otimes_R Re_{\alpha}) \cong \bigoplus_{\alpha \in A} fRe_{\alpha}.$$
Since $fR$ is balanced and faithful, the canonical ring homomorphism $\rho: R \rightarrow R{\rm End}_{fRf}(fR)$ via $r \mapsto [\rho_r: x \mapsto xr]$ is an isomorphism.  Hence $Re_{\alpha} \cong R{\rm End}_{fRf}(fR)e_{\alpha}$ as $R$-module. It is easy to see that the mapping $\varphi: R{\rm Hom}_{fRf}(fR, fRe_{\alpha}) \rightarrow R{\rm End}_{fRf}(fR)e_{\alpha}$ via $\alpha \mapsto \alpha \varepsilon e_{\alpha}$ is an isomorphism of $R$-modules, where $\varepsilon: fRe_{\alpha} \rightarrow fR$ is the canonical injection. It follows that $R{\rm Hom}_{fRf}(fR, fRe_{\alpha}) \cong Re_{\alpha}$ as $R$-modules and so ${\rm End}_R(Re_{\alpha}) \cong {\rm End}_{fRf}(fRe_{\alpha})$ as rings. Since ${\rm End}_R(Re_{\alpha})$ is a local ring, ${\rm End}_{fRf}(fRe_{\alpha})$ is a local ring and so each $fRe_{\alpha}$ is an indecomposable left $fRf$-module. Therefore every left $fRf$-module is a direct sum of indecomposable modules. Consequently by \cite[Corollary 2.7]{sp}, $fRf$ is a left pure semisimple ring.\\

$(b)$. By using the proof of $(a)$, we can see that functor $R{\rm Hom}_{fRf}(fR, -)$ sends $fRf$-Mod to the full subcategory ${\rm Proj}(R)$ of $R$Mod. The natural transformation
\begin{center}
$\eta: fR \otimes_R R{\rm Hom}_{fRf}(fR,-) \rightarrow {\rm id}$ via $fr \otimes \alpha \mapsto (fr)\alpha$
\end{center}
for $r \in R$ and $\alpha \in R{\rm Hom}_{fRf}(fR,X)$, where $X$ is a left $fRf$-module, is an equivalence. Therefore it is enough to show that the natural transformation
\begin{center}
$\nu: {\rm id} \rightarrow R{\rm Hom}_{fRf}(fR, fR \otimes_R -)$ via $x \mapsto [fr \mapsto fr \otimes x]$
\end{center}
for $x \in X$ and $r \in R$ is an equivalence when $X$ is a projective unitary left $R$-module. Let $P$ be a  projective unitary left $R$-module. Then $P \cong \bigoplus_{\alpha \in A}Re_{\alpha}$ as $R$-modules. We know that for each idempotent $e'$ of $R$, $fRe'$ is a finitely generated left $fRf$-module, because $fRf$ is left pure semisimple. This implies that the functor $R{\rm Hom}_{fRf}(fR,-)$ preserves direct sums and so we have the following isomorphisms
\begin{center}
$R{\rm Hom}_{fRf}(fR, fR \otimes_R P) \cong R{\rm Hom}_{fRf}(fR, \bigoplus_{\alpha \in A} (fR \otimes_R Re_{\alpha}))\cong$\\ $\bigoplus_{\alpha \in A} R{\rm Hom}_{fRf}(fR, fR \otimes_R Re_{\alpha}) \cong \bigoplus_{\alpha \in A} R{\rm Hom}_{fRf}(fR, fRe_{\alpha}) \cong \bigoplus_{\alpha \in A}Re_{\alpha} \cong P$
\end{center}
which imply that $\nu_{P}$ is an isomorphism. Therefore the functor
\begin{center}
$R{\rm Hom}_{fRf}(fR,-): fRf$-Mod $\rightarrow {\rm Proj}(R)$
\end{center}
is an additive equivalence with the inverse equivalence $fR \otimes_R-: {\rm Proj}(R) \rightarrow fRf$-Mod.
\end{proof}

A left $\Lambda$-module $Q$ is called {\it minimal injective cogenerator} if it is isomorphic to $\bigoplus_{i=1}^nE(S_i)$, where $\lbrace S_1, \cdots, S_n\rbrace$ is a complete set of non-isomorphic simple left $\Lambda$-modules.

\begin{Lem}\label{l6}
Let $\Lambda$ be a right artinian ring and $\lbrace U_{\alpha}~|~\alpha \in I\rbrace$ be a complete set of non-isomorphic finitely generated indecomposable right $\Lambda$-modules. Assume that $W$ is a minimal injective cogenerator in {\rm Mod}-$\Lambda$ and set $U=\bigoplus_{\alpha \in I}U_{\alpha}$ and $R=\widehat{{\rm End}}_{\Lambda}(U)$. If $W$ is finitely generated, then $\widehat{{\rm Hom}}_{\Lambda}(U, W)$ is a minimal faithful balanced unitary right $R$-module.
\end{Lem}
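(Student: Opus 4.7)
The plan is to decompose $W$ into its indecomposable injective summands, apply $\widehat{{\rm Hom}}_{\Lambda}(U,-)$ to transfer to the world of unitary $R$-modules, and then verify the three required properties (minimal faithful, balanced, unitary) separately; the balanced step is the main obstacle and requires the double-centralizer property of the injective cogenerator $W$ over $\Lambda$.

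First, since $\Lambda$ is right artinian, a minimal injective cogenerator of ${\rm Mod}$-$\Lambda$ has the form $W=\bigoplus_{i=1}^{n}E(S_{i})$ with $\{S_{1},\dots,S_{n}\}$ a complete set of non-isomorphic simple right $\Lambda$-modules. The hypothesis that $W$ is finitely generated forces each $E(S_{i})$ to be finitely generated indecomposable injective, hence isomorphic to some $U_{\alpha_{i}}$ in the given list, with the $\alpha_{i}$ pairwise distinct. Applying $\widehat{{\rm Hom}}_{\Lambda}(U,-)$ and using the identification from the proof of Proposition \ref{r4}(b), I obtain $M:=\widehat{{\rm Hom}}_{\Lambda}(U,W)\cong\bigoplus_{i=1}^{n}e_{\alpha_{i}}R$, where $e_{\alpha_{i}}$ is the idempotent associated to $U_{\alpha_{i}}$. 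This is automatically unitary. By the right-module analog of Proposition \ref{r4}(b), each $e_{\alpha_{i}}R$ is a finitely generated projective-injective unitary $R$-module with simple essential socle, and the socles are pairwise non-isomorphic because the $S_{i}$ are. By Fuller's generalization of the Colby-Rutter characterization (quoted in the paragraph preceding Proposition \ref{l2}), to conclude that $M$ is minimal faithful it then suffices to check that $M$ is faithful.

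For faithfulness, let $r\in R=\widehat{{\rm End}}_{\Lambda}(U)$ be non-zero. There is a finite $F\subseteq I$ with $e_{F}r=r$, so ${\rm Im}(r)$ is a non-zero $\Lambda$-submodule of $U$ contained in $\bigoplus_{\beta\in F'}U_{\beta}$ for some finite $F'\subseteq I$ (using finite generation of the $U_{\alpha}$'s). Since $W$ is a cogenerator in ${\rm Mod}$-$\Lambda$, there is a non-zero $\Lambda$-linear map ${\rm Im}(r)\to W$, which extends by the injectivity of $W$ to a map $\bigoplus_{\beta\in F'}U_{\beta}\to W$ and then, by extending with zero on the remaining summands, to $h\colon U\to W$ lying in $\widehat{{\rm Hom}}_{\Lambda}(U,W)=M$. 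The resulting composite $rh$ is non-zero, witnessing that $r$ acts non-trivially on $M$.

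The balanced condition is the main obstacle. My plan is to transport the double-centralizer property across the equivalence $\widehat{{\rm Hom}}_{\Lambda}(U,-)\colon{\rm Mod}\text{-}\Lambda\simeq{\rm Flat}(R)$, which is valid because $\Lambda$ is right artinian and hence $U$ is a generator of ${\rm Mod}$-$\Lambda$ (via \cite[Lemma~1.1]{aw}). Under this equivalence, the ring $T:={\rm End}_{\Lambda}(W)$ is identified with ${\rm End}_{R}(M)$, and $W$ corresponds to $M$ as a module over this ring. Since $\Lambda$ is right artinian with finitely generated minimal injective cogenerator $W$, the bimodule ${}_{T}W_{\Lambda}$ induces a Morita duality and in particular yields the double-centralizer isomorphism $\Lambda\cong{\rm End}_{T}(W)$. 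Transferring this across the equivalence, the unitary refinement $R{\rm End}_{T}(M)$ appearing in the paper's definition of balancedness--namely the $T$-linear endomorphisms of $M$ having finite support with respect to the idempotents $e_{\alpha}$--should match $\widehat{{\rm End}}_{\Lambda}(U)=R$, giving surjectivity of the canonical map $R\to R{\rm End}_{T}(M)$. The delicate point is the bookkeeping: pinning down precisely why the finite-support unitary refinement on the $R$-side corresponds, under $\widehat{{\rm Hom}}_{\Lambda}(U,-)$, to the Morita-dual-side double centralizer; once this is in place the balanced assertion follows.
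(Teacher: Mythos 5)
Your treatment of the unitary and minimal-faithful parts is sound and essentially matches the paper: the paper likewise observes that $W$ finitely generated gives $W\in{\rm add}(U)$, so $U$ is a generator--cogenerator, and then quotes the (right-module analog of the) proof of Proposition \ref{r4} together with Fuller's Colby--Rutter characterization; your explicit faithfulness argument via the cogenerator property and injectivity of $W$ is a fine substitute for that citation.

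The balanced step, however, is where you have a genuine gap, and you say so yourself (``the delicate point is the bookkeeping \dots once this is in place the balanced assertion follows''). Moreover, the tool you reach for is not quite the right one. The double-centralizer isomorphism $\Lambda\cong{\rm End}_{T}(W)$ is a statement about $T$-endomorphisms of $W$ itself, and the equivalence $\widehat{{\rm Hom}}_{\Lambda}(U,-)\colon {\rm Mod}\text{-}\Lambda\to{\rm Flat}(R)$ is an equivalence of $\Lambda$-module categories with $R$-module categories; it does not transport computations of ${\rm End}_{T}(-)$, so ``transferring $\Lambda\cong{\rm End}_{T}(W)$ across the equivalence'' does not by itself identify $R{\rm End}_{\Delta}(M)$ with $R$. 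What is actually needed is the \emph{full faithfulness} of the contravariant functor ${\rm Hom}_{\Lambda}(-,W)\colon{\rm mod}\text{-}\Lambda\to\Delta\text{-mod}$ (a duality, since $W$ is a finitely generated injective cogenerator over the right artinian ring $\Lambda$), applied to every pair $(U_{\alpha},U_{\beta})$: this gives natural isomorphisms ${\rm Hom}_{\Delta}\bigl({\rm Hom}_{\Lambda}(U_{\alpha},W),{\rm Hom}_{\Lambda}(U_{\beta},W)\bigr)\cong{\rm Hom}_{\Lambda}(U_{\beta},U_{\alpha})$, so that a finite-support $\Delta$-endomorphism of $M\cong\bigoplus_{\alpha}{\rm Hom}_{\Lambda}(U_{\alpha},W)$ is a column-finite matrix of such maps and hence comes from an element of $\widehat{{\rm End}}_{\Lambda}(U)=R$. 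The isomorphism $\Lambda\cong{\rm End}_{T}(W)$ is only the special case $U_{\alpha}=U_{\beta}=\Lambda$ of this and does not yield the general case. This is exactly the one-line argument the paper gives, and your proof needs it (or an equivalent) spelled out rather than deferred.
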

\begin{proof}
Since $W$ is finitely generated, $W \in {\rm add}(U)$ and so $U$ is a generator-cogenerator in Mod-$\Lambda$. Hence by using the proof of Proposition \ref{r4}, we can see that $\widehat{{\rm Hom}}_{\Lambda}(U, W)$ is a minimal faithful unitary right $R$-module. Set $\Delta={\rm End}_{\Lambda}(W)$. Since $\widehat{{\rm Hom}}_{\Lambda}(U, W) \cong \bigoplus_{\alpha \in I}{\rm Hom}_{\Lambda}(U_{\alpha}, W)$ as $\Delta$-modules and the functor ${\rm Hom}_{\Lambda}(-, W):$ mod-$\Lambda \rightarrow \Delta$-mod is a duality, for each $\gamma \in\\ R{\rm End}_{\Delta}(\widehat{{\rm Hom}}_{\Lambda}(U, W))$, there exists $r \in R$ such that $\rho_r=\gamma$. This implies that the canonical ring homomorphism $\rho: R \rightarrow R{\rm End}_{\Delta}(\widehat{{\rm Hom}}_{\Lambda}(U, W))$  via $r \mapsto [\psi \mapsto \psi r]$ is surjective.
\end{proof}

An idempotent $e$ of a semiperfect ring $\Lambda$ is called {\it basic idempotent} in case $e$ is the sum $e= e_1 + \cdots + e_m$ of a basic set $\lbrace e_1, \cdots, e_m\rbrace$ of idempotents of $\Lambda$. A semiperfect ring $\Lambda$ is called {\it basic} if $1_{\Lambda}$ is a basic idempotent of $\Lambda$ (see \cite[Sect. 27]{an}).

\begin{Pro}\label{l7}
Let $\Lambda$ be a left pure semisimple basic ring and suppose that $W$ is a minimal injective cogenerator in $\Lambda$-{\rm Mod}. Set $\Delta={\rm End}_{\Lambda}(W)$. Then the right functor ring $R_{{\Delta}^{op}}$ of $\Delta$ is a left perfect right locally coherent QF-3 ring with enough idempotents which has a minimal faithful balanced right ideal and ${\rm l.gl.dim}R_{{\Delta}^{op}} = 0 ~{\rm or} ~ 2$.
\end{Pro}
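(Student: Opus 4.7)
I would verify each listed property of $R := R_{\Delta^{\op}}$ in turn. Since $\Lambda$ is left pure semisimple, it is left artinian and every indecomposable left $\Lambda$-module is finitely generated; in particular each $E(S_i)$ is f.g., so the minimal injective cogenerator $W = \bigoplus_{i=1}^n E(S_i)$ of $\Lambda$-{\rm Mod} is finitely generated. Then $D := {\rm Hom}_\Lambda(-,W) : \Lambda\text{-mod}\to \text{mod-}\Delta$ is a Morita duality, yielding $\Delta$ right artinian with $W$, viewed as a right $\Delta$-module, the finitely generated minimal injective cogenerator of {\rm Mod}-$\Delta$. Setting $U = \bigoplus_\alpha V_\alpha$ for a complete set of f.g.\ indecomposable right $\Delta$-modules and $R = \widehat{{\rm End}}_\Delta(U)$, the decomposition of $U$ gives $R$ enough idempotents, and applying Lemma \ref{l6} to $\Delta$ with $W$ shows that $\widehat{{\rm Hom}}_\Delta(U,W)$ is a minimal faithful balanced right $R$-module; since $W\in {\rm add}(U)$ this module equals $fR$ for some idempotent $f\in R$, producing the required minimal faithful balanced right ideal and thus the QF-3 property.

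For right local coherence: $\Delta$ right artinian implies $\Delta$ right coherent, so {\rm mod}-$\Delta$ is abelian and {\rm Mod}-$\Delta$ locally coherent, giving $R$ right locally coherent. For the global dimension claim I would exploit the identification $R \cong R_\Lambda^{\op}$, which comes from $D$ reversing composition: ${\rm Hom}_\Delta(V_\alpha,V_\beta)\cong {\rm Hom}_\Lambda(M_\beta,M_\alpha)$ for matching f.g.\ indecomposable left $\Lambda$-modules $M_\alpha = D^{-1}(V_\alpha)$. Under this identification, left $R$-modules correspond to right $R_\Lambda$-modules, so ${\rm l.gl.dim}\,R = {\rm r.gl.dim}\,R_\Lambda$. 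Applying the right-sided analog of \cite[Lemma 2.3]{fin} to the right artinian ring $\Delta$ shows that $R$ is also left locally noetherian, so $R_\Lambda$ is right locally noetherian; combined with property $(ii)$ of Corollary \ref{cc1}, $R_\Lambda$ is both-sided locally noetherian, and the enough-idempotents analog of Auslander's symmetry theorem upgrades ${\rm l.gl.dim}\,R_\Lambda \in \{0,2\}$ to ${\rm r.gl.dim}\,R_\Lambda \in \{0,2\}$.

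The main obstacle is the left perfectness of $R$: by \cite{S77} this is equivalent to {\rm Mod}-$\Delta$ being pure semisimple, equivalently to $R_\Lambda$ being right perfect. Since $R_\Lambda$ is already left perfect (as $\Lambda$-{\rm Mod} is pure semisimple) and locally noetherian on both sides, and each $e_\alpha R_\Lambda e_\alpha$ is right perfect by property $(i)$ of Corollary \ref{cc1}, the crux is upgrading left $t$-nilpotence of $J(R_\Lambda)$ to right $t$-nilpotence. I would argue this by combining the bilateral local noetherianness with the dominant dimension conditions $(iii)$--$(iv)$ in a Bass-type argument adapted to rings with enough idempotents; this is the key technical step of the proof.
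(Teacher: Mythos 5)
Your reduction of the minimal faithful balanced right ideal to Lemma \ref{l6} is exactly the paper's step, and your route to right local coherence through the right artinian ring $\Delta$ is a plausible (if differently sourced) substitute for the paper's observation that products of projective unitary left $R_{\Lambda}$-modules are projective. The genuine gap is precisely where you flag it and is not a detail that can be deferred: you never prove that $R_{{\Delta}^{op}}$ is left perfect. Having chosen to identify $R_{{\Delta}^{op}}$ with $R_{\Lambda}^{op}$, you are forced to show that $R_{\Lambda}$ is \emph{right} perfect, i.e.\ to convert the left $t$-nilpotence of $J(R_{\Lambda})$ (which encodes left pure semisimplicity of $\Lambda$) into right $t$-nilpotence. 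That is a left--right symmetry assertion of exactly the flavour that the pure semisimplicity conjecture itself lives in, and a "Bass-type argument combining bilateral local noetherianness with the dominant dimension conditions" is a hope, not a proof. The same defect infects your global dimension step: the "enough-idempotents analog of Auslander's symmetry theorem" upgrading ${\rm l.gl.dim}\,R_{\Lambda}\in\{0,2\}$ to ${\rm r.gl.dim}\,R_{\Lambda}\in\{0,2\}$ is not available off the shelf for non-unital locally noetherian rings and is not supplied. A smaller but real issue: deducing "the QF-3 property" from the minimal faithful \emph{right} ideal alone gives right QF-3, whereas the way Proposition \ref{l5} and Proposition \ref{l2}(b) consume this hypothesis also requires a minimal faithful left ideal $Re$, so the left QF-3 property must be argued separately.

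The paper sidesteps all of this by a different transfer mechanism: it first establishes every required property for the \emph{left} functor ring $R_{\Lambda}$ --- left perfect and ${\rm l.gl.dim}=0$ or $2$ from Corollary \ref{cc1} together with \cite[Lemma 2.3]{fin} and \cite{fu}, QF-3 from \cite[Theorem 3.3]{f} using that $W$ is a Morita duality bimodule, and right local coherence from \cite[Proposition 2.4]{gs} --- and then invokes \cite[Lemma 3.1]{f} and \cite[Lemma 2.3]{fin} to conclude that $R_{{\Delta}^{op}}$ is \emph{Morita equivalent} to $R_{\Lambda}$, i.e.\ there is a covariant equivalence of the categories of unitary left modules. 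All the left-handed properties then transfer verbatim, and only the balanced minimal faithful right ideal has to be produced on the $R_{{\Delta}^{op}}$ side, via Lemma \ref{l6}. To repair your outline you would either have to prove this Fuller--Hullinger-type equivalence $R_{\Lambda}{\rm Mod}\simeq R_{{\Delta}^{op}}{\rm Mod}$ yourself, or genuinely supply the two missing symmetry arguments; as written, neither the left perfectness nor the global dimension bound is established.
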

\begin{proof}
By Corollary \ref{cc1}, \cite[Lemma 2.3]{fin} and \cite[Theorem]{fu}, $R_{\Lambda}$ is a left perfect ring with enough idempotents such that ${\rm l.gl.dim}R_{\Lambda} = 0 ~{\rm or} ~ 2$. Since $W$ is Morita duality $\Lambda$-$\Delta$-bimodule (as discussed in \cite[Sect. 24]{an}), by \cite[Theorem 3.3]{f} and \cite[Lemma 2.3]{fin}, $R_{\Lambda}$ is a QF-3 ring. Moreover by \cite[Lemma 3.1]{f} and \cite[Lemma 2.3]{fin}, $R_{\Lambda}$ is Morita equivalent to $R_{{\Delta}^{op}}$. It is not difficult to see that the direct product of any family of projective unitary left $R_{\Lambda}$-modules is projective. This implies that by  \cite[Proposition 2.4]{gs}, $R_{\Lambda}$ is a right locally coherent ring. Therefore $R_{{\Delta}^{op}}$ is a left perfect QF-3 right locally coherent ring with enough idempotents with ${\rm l.gl.dim}R_{{\Delta}^{op}} =0 ~{\rm or}~ 2$. On the other hand, since $W$ is finitely generated minimal injective cogenerator in Mod-$\Delta$ and $\Delta$ is a right artinian ring, by Lemma \ref{l6}, $R_{{\Delta}^{op}}$ has a minimal faithful balanced right ideal. Therefore $R_{{\Delta}^{op}}$ is a left perfect right locally coherent QF-3 ring with enough idempotents which has a minimal faithful balanced right ideal and ${\rm l.gl.dim}R_{{\Delta}^{op}} = 0 ~{\rm or} ~ 2$.
\end{proof}

We are now in a position to prove our main result in this section.

\begin{The}\label{rt1}
There exists a bijection between Morita equivalence classes of left pure semisimple rings $\Lambda$ and Morita equivalence classes of left perfect right locally coherent QF-3 rings $R$ with enough idempotents which have a minimal faithful balanced right ideal and ${\rm l.gl.dim}R = 0 ~{\rm or} ~ 2$.
\end{The}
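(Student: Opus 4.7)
The plan is to use Propositions \ref{l5} and \ref{l7} as the two halves of an inverse pair. I would construct a forward map $\Phi$ and a backward map $\Psi$ between the two collections of Morita equivalence classes, then verify $\Psi\circ\Phi={\rm id}$ and $\Phi\circ\Psi={\rm id}$.

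For the forward map, given a left pure semisimple ring $\Lambda$, I may replace it by its basic representative and set $\Phi([\Lambda])=[R_{\Delta^{\rm op}}]$, where $W$ is the minimal injective cogenerator of $\Lambda$-Mod and $\Delta={\rm End}_\Lambda(W)$. Proposition \ref{l7} produces exactly the properties demanded of $R_{\Delta^{\rm op}}$. Well-definedness on Morita classes follows because any Morita equivalence of $\Lambda$'s carries the minimal injective cogenerators to each other, which yields an isomorphism of the endomorphism rings $\Delta$. For the backward map, set $\Psi([R])=[fRf]$ where $fR$ is a minimal faithful balanced right ideal of $R$. Proposition \ref{l5}(a) tells me $fRf$ is left pure semisimple, and uniqueness of the finitely generated minimal faithful unitary module (by the Colby--Rutter--Fuller characterization recalled in the text) makes $fRf$ well-defined up to isomorphism, and thence up to Morita equivalence under Morita equivalences of $R$.

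To verify $\Psi\circ\Phi={\rm id}$, I would start with $\Lambda$ basic left pure semisimple. Lemma \ref{l6} identifies the minimal faithful balanced right ideal of $R_{\Delta^{\rm op}}$ with $\widehat{\Hom}_\Delta(U,W)$, where $U$ is the sum of the finitely generated indecomposable right $\Delta$-modules; writing this as $fR_{\Delta^{\rm op}}$ and using that $W \in {\rm add}(U)$ (since $W$ is finitely generated as a right $\Delta$-module), I obtain $fR_{\Delta^{\rm op}}f\cong{\rm End}_\Delta(W)$. Under the Morita duality associated to $W$ (as $\Lambda$ is basic left artinian) this endomorphism ring agrees with $\Lambda$ itself, completing the check.

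To verify $\Phi\circ\Psi={\rm id}$, let $R$ have the properties in the theorem, set $\Lambda'=fRf$, and put $\Delta'={\rm End}_{\Lambda'}(W')$ for $W'$ the minimal injective cogenerator of $\Lambda'$-Mod. Proposition \ref{l5}(b) supplies an equivalence $\Lambda'$-Mod $\simeq \Proj(R)$. On the other hand, the functor-ring construction combined with left pure semisimplicity of $\Lambda'$ gives $\Lambda'$-Mod $\simeq \Flat(R_{\Lambda'})=\Proj(R_{\Lambda'})$, the last equality because $R_{\Lambda'}$ is left perfect; and \cite[Lemma 3.1]{f}, already used inside Proposition \ref{l7}, supplies a Morita equivalence of $R_{\Lambda'}$ with $R_{\Delta'^{\rm op}}$. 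Concatenating gives $\Proj(R) \simeq \Proj(R_{\Delta'^{\rm op}})$, and since both equivalences restrict to the finitely generated projectives, \cite[Theorem 3.4]{zn} yields $R$ Morita equivalent to $R_{\Delta'^{\rm op}}=\Phi(\Psi([R]))$. The main obstacle I anticipate is the bookkeeping around the right-morphism conventions, specifically verifying that $fR_{\Delta^{\rm op}}f$ is identified with $\Lambda$ rather than with $\Lambda^{\rm op}$ (since left pure semisimplicity is not known to pass to opposite rings), together with tracing a Morita equivalence $R \simeq R'$ carefully through the minimal faithful balanced right ideal to confirm that $\Psi$ is well-defined on Morita classes.
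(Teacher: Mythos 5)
Your proposal is correct and follows essentially the same route as the paper: the forward map $\Lambda\mapsto R_{\Delta^{\rm op}}$ via Proposition \ref{l7} and Lemma \ref{l6}, the use of Proposition \ref{l5} to recover a left pure semisimple ring $fRf$ from $R$, and the final identification via restriction to finitely generated projectives and \cite[Theorem 3.4]{zn}. The only difference is organizational --- you package the argument as an explicit inverse pair $\Phi,\Psi$ and check both composites, whereas the paper verifies well-definedness, injectivity and surjectivity of the single assignment $\Lambda\mapsto R_{\Delta^{\rm op}}$; your $\Phi\circ\Psi$ computation is exactly the paper's surjectivity argument with the ``it is not difficult to show'' step filled in.
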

\begin{proof}
Let $\Lambda$ be a left pure semisimple ring. Then there exists a left pure semisimple basic ring $\Lambda'$ such that $\Lambda$ is Morita equivalent to $\Lambda'$. Assume that $W$ is a minimal injective cogenerator in $\Lambda'$-Mod and $\Delta={\rm End}_{\Lambda'}(W)$. Then by Proposition \ref{l7}, the right functor ring $R_{{\Delta}^{op}}$ of $\Delta$ is a left perfect right locally coherent QF-3 ring with enough idempotents which has a minimal faithful balanced right ideal and ${\rm l.gl.dim}R_{{\Delta}^{op}} = 0 ~{\rm or} ~ 2$. By using \cite[Theorem 3.4]{zn}, it is easy to see that the assignment $\Lambda \mapsto R_{{\Delta}^{op}}$ sends Morita equivalence classes of left pure semisimple rings to Morita equivalence classes of left perfect right locally coherent QF-3 rings $R_{{\Delta}^{op}}$ with enough idempotents which have a minimal faithful balanced right ideal and ${\rm l.gl.dim}R_{{\Delta}^{op}} = 0 ~{\rm or} ~ 2$. Also, this map is injective.  Now we show that this map is surjective. Let $R$ be a left perfect right locally coherent QF-3 ring with enough idempotents such that ${\rm l.gl.dim}R = 0 ~{\rm or} ~ 2$. Assume that $fR$, $f^2=f \in R$, is a minimal faithful balanced right ideal of $R$. By Proposition \ref{l5}, $fRf$ is a left pure semisimple ring and the functor $R{\rm Hom}_{fRf}(fR,-): fRf$-Mod $\rightarrow {\rm Proj}(R)$ is an additive equivalence. Let $Q$ be a minimal injective cogenerator in $fRf$-Mod and $\Gamma={\rm End}_{fRf}(Q)$. It is not difficult to show that $R_{{\Gamma}^{op}}$ is Morita equivalent to $R$. Hence this map is surjective and the result follows.
\end{proof}

Let $R$ be a ring and $Q$ be a left quotient ring of $R$. The ring $Q$ is called {\it maximal} in case given any left quotient ring $S$ of $R$, there is a ring monomorphism from $S$ to $Q$ which induces the identity morphism on $R$ (see \cite[p. 66]{ca}). It is well-known that every ring with enough idempotents has a maximal left quotient ring which is a ring with identity (see \cite[p. 68, Proposition F]{ca}). By using \cite[Proposition 1.8]{rt} and \cite[Theorems 1.2 and 1.3]{cr1}, we can see that a ring with identity $\Gamma$ is a semiprimary QF-3 maximal quotient ring with ${\rm l.gl.dim}\Gamma = 0 ~{\rm or} ~ 2$  if and only if $\Gamma$ is a left perfect right locally coherent QF-3 ring which has a minimal faithful balanced right ideal and ${\rm l.gl.dim}\Gamma = 0 ~{\rm or} ~ 2$. Therefore we have the following corollaries.

\begin{Cor}{\rm (See \cite[Theorem 4.3]{rt})}\label{rt2}
There exists a bijection between Morita equivalence classes of rings of finite representation type $\Lambda$ and Morita equivalence classes of semiprimary QF-3 maximal quotient rings $R$ with ${\rm l.gl.dim}R = 0 ~{\rm or} ~ 2$.
\end{Cor}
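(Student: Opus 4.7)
The plan is to derive Corollary \ref{rt2} by specializing Theorem \ref{rt1} to the subclass of rings of finite representation type and invoking the characterization stated just before the corollary. That characterization says that, for a ring with identity $\Gamma$, being a semiprimary QF-3 maximal quotient ring with ${\rm l.gl.dim}\,\Gamma = 0$ or $2$ is the same as being a left perfect right locally coherent QF-3 ring that has a minimal faithful balanced right ideal and ${\rm l.gl.dim}\,\Gamma = 0$ or $2$. Thus the task reduces to showing that the bijection of Theorem \ref{rt1} restricts to a bijection whose left-hand side consists of rings of finite representation type and whose right-hand side consists of \emph{unital} rings satisfying the conditions of Theorem \ref{rt1}.

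For the forward direction, let $\Lambda$ be of finite representation type. By Fuller--Reiten \cite{fu, fr}, $\Lambda$ is both left and right pure semisimple, so in particular Theorem \ref{rt1} applies. Replacing $\Lambda$ by a Morita equivalent basic ring, we form $R_{{\Delta}^{op}}$ via Proposition \ref{l7}, where $W$ is the minimal injective cogenerator in $\Lambda$-{\rm Mod} and $\Delta={\rm End}_{\Lambda}(W)$. Because $\Lambda$ is of finite representation type, $W$ is finitely generated, and $W$ induces a Morita duality between $\Lambda$ and $\Delta$ in the sense of \cite[Sect.~24]{an}. This duality transports the finite set of non-isomorphic indecomposable finitely generated left $\Lambda$-modules to a finite set of non-isomorphic indecomposable finitely generated right $\Delta$-modules, so the complete set of orthogonal primitive idempotents used in the construction of $R_{{\Delta}^{op}}$ is finite, and its sum is an identity element of $R_{{\Delta}^{op}}$. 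The characterization above then exhibits $R_{{\Delta}^{op}}$ as a semiprimary QF-3 maximal quotient ring with ${\rm l.gl.dim}\,R_{{\Delta}^{op}} = 0$ or $2$.

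For the reverse direction, let $R$ be a semiprimary QF-3 maximal quotient ring with identity such that ${\rm l.gl.dim}\,R = 0$ or $2$. By the cited characterization $R$ satisfies the hypotheses of Theorem \ref{rt1}, hence corresponds under that theorem to the left pure semisimple ring $fRf$, where $fR$ is a minimal faithful balanced right ideal of $R$. Since $R$ has an identity, it decomposes as $R = \bigoplus_{i=1}^{n} Re_i$ into finitely many indecomposable unitary projective left $R$-modules, and the equivalence $R{\rm Hom}_{fRf}(fR,-) \colon fRf\text{-}{\rm Mod} \rightarrow {\rm Proj}(R)$ from Proposition \ref{l5}(b) translates this into the statement that $fRf$-{\rm Mod} has only finitely many non-isomorphic indecomposable finitely generated objects. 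Hence $fRf$ is of finite representation type. Since the assignments $\Lambda \mapsto R_{{\Delta}^{op}}$ and $R \mapsto fRf$ are mutually inverse up to Morita equivalence by Theorem \ref{rt1}, this yields the desired bijection.

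The main obstacle I anticipate is the careful confirmation that a ring of finite representation type admits the Morita duality $W$ that carries indecomposables on the left of $\Lambda$ to indecomposables on the right of $\Delta$: this requires verifying that $\Lambda$ is right artinian with a finitely generated minimal injective cogenerator so that the classical theory in \cite[Sect.~24]{an} applies, rather than only the simpler Artin algebra setting. Once this point is in place, the rest of the argument is a routine specialization of Theorem \ref{rt1} combined with the equivalence for unital rings quoted above.
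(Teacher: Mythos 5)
Your argument is correct and follows exactly the route the paper intends: the corollary is obtained by restricting the bijection of Theorem \ref{rt1} to the unital case via the quoted equivalence (from \cite[Proposition 1.8]{rt} and \cite[Theorems 1.2 and 1.3]{cr1}), with finite representation type on one side matching finiteness of the idempotent set (i.e., existence of an identity) on the other. The paper leaves this specialization implicit ("Therefore we have the following corollaries"), and your write-up supplies precisely the missing details, including the Fuller--Reiten step and the transport of indecomposables through the Morita duality and through the equivalence $fRf\text{-Mod}\simeq{\rm Proj}(R)$.
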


\begin{Cor}\label{rtt2}
The following statements are equivalent.
\begin{itemize}
\item[$(a)$] The pure semisimplicity conjecture holds.
\item[$(b)$] Every left perfect right locally coherent QF-3 ring $R$ with enough idempotents which have a minimal faithful balanced right ideal and ${\rm l.gl.dim}R = 0 ~{\rm or} ~ 2$ is Morita equivalent to a maximal quotient ring.
\end{itemize}
\end{Cor}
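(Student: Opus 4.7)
The plan is to chain Theorem~\ref{rt1} with Corollary~\ref{rt2} via the characterization of semiprimary QF-3 maximal quotient rings recalled immediately before Corollary~\ref{rt2}. Since the properties appearing in~(b) -- left perfect, right locally coherent, QF-3, existence of a minimal faithful balanced right ideal, and ${\rm l.gl.dim}=0$ or $2$ -- are all Morita invariants, and since a maximal left quotient ring automatically has identity, being Morita equivalent to a maximal quotient ring is the same as being Morita equivalent to a semiprimary QF-3 maximal quotient ring with ${\rm l.gl.dim}=0$ or $2$.

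For (a)$\Rightarrow$(b), let $R$ satisfy the hypotheses of~(b). By Theorem~\ref{rt1} there is a left pure semisimple ring $\Lambda$ whose Morita class corresponds to that of $R$; by the assumed pure semisimplicity conjecture $\Lambda$ is of finite representation type. Applying Corollary~\ref{rt2} to $\Lambda$ produces a semiprimary QF-3 maximal quotient ring $\Gamma$ with ${\rm l.gl.dim}\,\Gamma=0$ or $2$ in the Morita class associated to that of $\Lambda$. The key point to verify is that the two bijections agree on the finite-representation-type subclass; this should follow because for $\Lambda$ of finite representation type the functor ring $R_{\Delta^{\rm op}}$ produced in Theorem~\ref{rt1} has only finitely many primitive idempotents, hence is unital, and coincides up to Morita equivalence with the Ringel-Tachikawa construction underlying Corollary~\ref{rt2}. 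Consequently $R$ is Morita equivalent to $\Gamma$, and in particular to a maximal quotient ring.

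For (b)$\Rightarrow$(a), let $\Lambda$ be a left pure semisimple ring. By Theorem~\ref{rt1} the associated functor ring $R$ satisfies the hypotheses of~(b). By~(b), $R$ is Morita equivalent to a maximal quotient ring $\Gamma$. Morita equivalence preserves the properties listed in~(b), and since $\Gamma$ has identity, the characterization recalled before Corollary~\ref{rt2} shows that $\Gamma$ is a semiprimary QF-3 maximal quotient ring with ${\rm l.gl.dim}\,\Gamma=0$ or $2$. Corollary~\ref{rt2} then yields a ring $\Lambda'$ of finite representation type whose Morita class matches that of $\Gamma$ via the Ringel-Tachikawa bijection. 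Transporting Morita equivalences back through Theorem~\ref{rt1}, the Morita class of $\Lambda$ coincides with that of $\Lambda'$, so $\Lambda$ is of finite representation type, which is the content of the pure semisimplicity conjecture.

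The main obstacle is the verification that the bijections of Theorem~\ref{rt1} and Corollary~\ref{rt2} are compatible on the finite-representation-type subclass, i.e., that the assignment $\Lambda \mapsto R_{\Delta^{\rm op}}$ of Theorem~\ref{rt1} coincides up to Morita equivalence with the Ringel-Tachikawa assignment of Corollary~\ref{rt2} whenever $\Lambda$ has only finitely many non-isomorphic indecomposable modules. Once this compatibility is established -- by tracking the idempotent structure and using that finitely many indecomposables force the functor ring to be unital -- the rest of the argument reduces to bookkeeping with Morita equivalences.
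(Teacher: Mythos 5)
Your argument is correct and is essentially the proof the paper intends: the corollary is read off by combining Theorem~\ref{rt1}, Corollary~\ref{rt2}, and the characterization of unital semiprimary QF-3 maximal quotient rings with ${\rm l.gl.dim}=0$ or $2$ recalled just before Corollary~\ref{rt2}. The compatibility issue you flag as the main obstacle is in fact automatic here, since the paper obtains Corollary~\ref{rt2} itself as the restriction of the bijection of Theorem~\ref{rt1} to the finite-representation-type subclass (a ring in the class of Theorem~\ref{rt1} is Morita equivalent to a unital one exactly when the associated functor ring has finitely many non-isomorphic indecomposable projectives), so no separate matching with the original Ringel--Tachikawa construction is required.
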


\section{pure semisimple Grothendieck categories}
In this section we study locally finitely presented pure semisimple Grothendieck categories. First we show that the functor ring of any locally finitely presented pure semisimple Grothendieck category
which satisfies the property $(\ast)$, has the properties $(i)$-$(iv)$ (see Proposition \ref{r51}). Then we show that for any ring $R$ which has the properties $(i)$-$(iv)$, ${\rm Flat}(R)$ is a locally finitely presented pure semisimple Grothendieck category which satisfies the property $(\ast)$ (see Proposition \ref{r52}). By using this facts we give a bijection between Morita equivalence classes of left pure semisimple (hereditary)
rings and equivalence classes of locally finitely presented pure semisimple (hereditary) Grothendieck
categories which satisfy the property $(\ast)$ (see Corollaries \ref{r54} and \ref{r64}). Finally we provide another equivalent statement for the pure semisimplicity conjecture (see Corollary \ref{p22}).\\

Let $\mathscr{A}$ be a locally finitely presented category. Then by \cite[Theorem 1.4]{wc}, ${\rm fp}(\mathscr{A})$ is a skeletally small additive category with split idempotents, and the functor
\begin{center}
$\mathbb{H}: \mathscr{A} \rightarrow {\rm Mod}({\rm fp}(\mathscr{A}))$ via $A \mapsto {\rm Hom}_{\mathscr{A}}(-,A)|_{{\rm fp}(\mathscr{A})}$
\end{center}
induces an additive equivalence between the category $\mathscr{A}$ and the category ${\rm Flat}({\rm fp}(\mathscr{A}))$.

Now let $\lbrace V_{\alpha}~|~ \alpha \in I \rbrace$ be a complete set of non-isomorphic finitely presented objects in $\mathscr{A}$. Set $T_{\mathscr{A}}= \widehat{{\rm End}}_{\mathscr{A}}(V)$, where $V= \bigoplus_{\alpha \in I}V_\alpha$. It is shown in \cite[Proposition 2, p. 347]{gab} that the additive functor
\begin{center}
$\mathbb{G}: {\rm Mod}({\rm fp}(\mathscr{A})) \rightarrow T_{\mathscr{A}}{\rm Mod}$ via $H \mapsto \bigoplus_{\alpha \in I}H(V_{\alpha})$
\end{center}
is an equivalence with the inverse equivalence
\begin{center}
$\mathbb{F}: T_{\mathscr{A}}{\rm Mod} \rightarrow {\rm Mod}({\rm fp}(\mathscr{A}))$ via $M \mapsto \mathbb{F}_M$
\end{center}
where $\mathbb{F}_M: {\rm fp}(\mathscr{A}) \rightarrow \mathfrak{Ab}$ via $X \mapsto {\rm Hom}_{\mathscr{A}}(X, V) \otimes_{T_{\mathscr{A}}} M$. It is easy to see that it preserves and reflects finitely generated projective objects and coproducts. By using the composition of the functors $\mathbb{H}$ and $\mathbb{G}$, we get an additive equivalence $\mathscr{A} \rightarrow {\rm Flat}(T_{\mathscr{A}})$ via $A \mapsto \bigoplus_{\alpha \in I}{\rm Hom}_{\mathscr{A}}(V_{\alpha}, A)$ which induces an equivalence between ${\rm fp}(\mathscr{A})$ and ${\rm proj}(T_{\mathscr{A}})$. This is exactly the functor $$\widehat{{\rm Hom}}_{\mathscr{A}}(V, -): \mathscr{A} \rightarrow {\rm Flat}(T_{\mathscr{A}}).$$ Note that if every finitely presented object in a locally finitely presented Grothendieck category $\mathscr{A}$ is a finite coproduct of indecomposable objects in $\mathscr{A}$, then by \cite[Theorem 3.4]{zn}, $T_{\mathscr{A}}$ is Morita equivalent to the functor ring $R_{\mathscr{A}}$ of $\mathscr{A}$.

\begin{Pro}\label{r51}
Let $\mathscr{A}$ be a locally finitely presented pure semisimple Grothendieck category which satisfies the property $(\ast)$. Then $R_{\mathscr{A}}$ has the properties $(i)$-$(iv)$.
\end{Pro}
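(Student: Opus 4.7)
The plan is to parallel the proof of Corollary \ref{cc1}, which treats the case $\mathscr{A}=\Lambda$-${\rm Mod}$ for a left pure semisimple ring $\Lambda$, by passing through the equivalences
\[
\widehat{\Hom}_{\mathscr{A}}(V,-) \colon \mathscr{A} \longrightarrow \Flat(R_{\mathscr{A}}), \qquad \mathbb{G} \colon {\rm Mod}({\rm fp}(\mathscr{A})) \longrightarrow R_{\mathscr{A}}{\rm Mod}
\]
(the second being the Gabriel equivalence composed with the Morita equivalence between $T_{\mathscr{A}}$ and $R_{\mathscr{A}}$ of \cite[Theorem 3.4]{zn}). The first equivalence, combined with pure semisimplicity, forces $\Flat(R_{\mathscr{A}})=\Proj(R_{\mathscr{A}})$ and identifies $\mathscr{A}$ with $\Proj(R_{\mathscr{A}})$; the second converts the property $(\ast)$ into ring-theoretic statements in $R_{\mathscr{A}}{\rm Mod}$.

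For properties $(i)$ and $(ii)$: Krull-Schmidt on finitely presented objects of a pure semisimple Grothendieck category yields a complete orthogonal family $\{e_{\alpha}\}$ of local idempotents of $R_{\mathscr{A}}$ with $e_{\alpha} R_{\mathscr{A}} e_{\alpha}={\rm End}_{\mathscr{A}}(V_\alpha)$ local, while \cite{S77} supplies left perfectness of $R_{\mathscr{A}}$. Then $(i)$ follows from \cite[Propositions 32.4 and 51.6]{wi} exactly as in Proposition \ref{r4}. For $(ii)$, the bound ${\rm l.gl.dim}\,R_{\mathscr{A}}\leq 2$ is obtained from \cite[Theorem 2]{fs} and \cite[Lemma 2.2]{fin}, the case ${\rm l.gl.dim}\,R_{\mathscr{A}}=1$ is ruled out exactly as in Corollary \ref{cc1}, and local noetherianness follows from \cite[Lemma 2.3]{fin}. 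Property $(iii)$ is next: since $\mathbb{G}$ preserves finitely generated, faithful, projective and injective objects, QF-3 of ${\rm Mod}({\rm fp}(\mathscr{A}))$ transports to a finitely generated minimal faithful projective-injective module $M=\bigoplus_{i=1}^{n}E(S_i)$ in $R_{\mathscr{A}}{\rm Mod}$, and the standard identification ${\rm proj}(R_{\mathscr{A}})\cap{\rm inj}(R_{\mathscr{A}})={\rm add}(M)$ for QF-3 rings with enough idempotents closes the case.

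The main obstacle is $(iv)$. The finitely-generated-essential-socle clause of $(\ast)$ translates via $\mathbb{G}$ to: each indecomposable projective $R_{\mathscr{A}} e_{\alpha}$ has finitely generated essential socle. Since $M$ is faithful every simple summand of ${\rm Soc}(R_{\mathscr{A}} e_{\alpha})$ embeds into $M$ and so is isomorphic to some $S_i$, whence $E_0 := E(R_{\mathscr{A}} e_{\alpha}) \in {\rm add}(M)$ is projective-injective, giving ${\rm dom.dim}\,R_{\mathscr{A}} e_\alpha \geq 1$. Upgrading to ${\rm dom.dim}\,R_{\mathscr{A}} e_\alpha \geq 2$ requires showing that $E_1 := E(E_0/R_{\mathscr{A}} e_{\alpha})$ again lies in ${\rm add}(M)$, which is the technical heart of the argument. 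The strategy is to mimic Proposition \ref{r4}(b): under the identification $\mathscr{A}\simeq\Proj(R_{\mathscr{A}})$, injective objects of $\mathscr{A}$ correspond to projective-injective modules in $R_{\mathscr{A}}{\rm Mod}$, so a two-step injective resolution of $V_{\alpha}$ in $\mathscr{A}$ transports to a resolution of $R_{\mathscr{A}} e_{\alpha}$ by objects of ${\rm add}(M)$. Here $(\ast)$ plays the cogenerator role that $V$ played in Proposition \ref{r4}(b), while the local noetherianness from $(ii)$ ensures that the finitely generated cokernel $E_0/R_{\mathscr{A}} e_{\alpha}$ has its $R_{\mathscr{A}}$-injective envelope decompose as a finite direct sum of projective-injective indecomposables, all in ${\rm add}(M)$.
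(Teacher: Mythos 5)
Your overall strategy (transport the property $(\ast)$ through the equivalences $\mathscr{A}\simeq{\rm Flat}(R_{\mathscr{A}})$ and ${\rm Mod}({\rm fp}(\mathscr{A}))\simeq R_{\mathscr{A}}{\rm Mod}$) matches the paper's, and your treatment of $(i)$ is essentially the paper's. But there are two genuine gaps. First, for $(iii)$ you assert that ${\rm proj}(R_{\mathscr{A}})\cap{\rm inj}(R_{\mathscr{A}})={\rm add}(\bigoplus_{l=1}^{n}E(S_l))$ is a ``standard identification'' for QF-3 rings with enough idempotents. It is not: faithfulness of the minimal faithful module $M$ only gives a monomorphism $R_{\mathscr{A}}\theta_i\rightarrow\widehat{\prod}_A M$ into a product over a possibly infinite index set, and since $R_{\mathscr{A}}$ is merely left perfect and left locally noetherian (not left artinian), an indecomposable projective-injective $R_{\mathscr{A}}\theta_i$ need not a priori have nonzero socle (left perfectness controls socles of \emph{right} modules, not left ones). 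This is exactly where the second clause of $(\ast)$ enters the paper's proof: the finitely generated essential socle of the representable functors yields a monomorphism $R_{\mathscr{A}}\theta_i\rightarrow\bigoplus_{l\in B_i}E(S_l)$ with $B_i$ finite, which splits and gives $(iii)$ by the locality of the rings ${\rm End}_{R_{\mathscr{A}}}(E(S_l))$. You instead reserve the essential-socle hypothesis for $(iv)$, where the paper does not need it.

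Second, your argument for $(iv)$ (and partly for $(ii)$) rests on the claim that under $\mathscr{A}\simeq{\rm Proj}(R_{\mathscr{A}})$ ``injective objects of $\mathscr{A}$ correspond to projective-injective modules in $R_{\mathscr{A}}{\rm Mod}$.'' The inclusion ${\rm Proj}(R_{\mathscr{A}})\subseteq R_{\mathscr{A}}{\rm Mod}$ is not an exact embedding of abelian categories, so injectivity in $\mathscr{A}$ does not automatically transfer to injectivity in $R_{\mathscr{A}}{\rm Mod}$; in Proposition \ref{r4}(b) this transfer is supplied by \cite[Proposition 51.7]{wi}, a statement about a generator $V$ in a module category, and you would need (and do not supply) its analogue for an abstract locally finitely presented Grothendieck category. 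The same objection applies to your appeal to \cite[Theorem 2]{fs} and \cite[Lemmas 2.2 and 2.3]{fin} for $(ii)$: those results concern functor rings $R_{\Lambda}$ of module categories, and the step ruling out global dimension $1$ in Corollary \ref{cc1} uses \cite[Proposition 52.8]{wi} applied to the ring $\Lambda$, which does not exist here. The paper avoids all of this by working at the category level: $(ii)$ and $(iv)$ come from \cite[Theorem 6.3]{S77}, \cite[Corollary 2.6]{S80}, \cite[Proposition 4.3]{bs} and, crucially, \cite[Theorems 2.7 and 3.3]{gs}, which characterize when ${\rm Flat}(R)$ is a Grothendieck category precisely in terms of ${\rm gl.dim}$ and ${\rm dom.dim}$ of the functor ring. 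Your plan for $(iv)$ could probably be repaired along the lines of Proposition \ref{r4}(b), but as written it depends on your unproved version of $(iii)$ and on an injectivity transfer you have not established.
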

\begin{proof}
By \cite[Theorem 1.9]{S78}, every finitely presented object in $\mathscr{A}$ is a finite coproduct of indecomposable subobjects and ${\rm Mod}({\rm fp}(\mathscr{A}))$ is a perfect category and so $R_{\mathscr{A}}$ is a left perfect ring. Then there is a complete set $\lbrace \theta_i~|~i \in I \rbrace$ of pairwise orthogonal local idempotents of $R_{\mathscr{A}}$ such that $R_{\mathscr{A}}=\bigoplus_{i \in I}R_{\mathscr{A}}\theta_{i}=\bigoplus_{i \in I}\theta_{i}R_{\mathscr{A}}$. Since $\mathbb{H}: {\rm fp}(\mathscr{A}) \rightarrow {\rm proj}({\rm fp}(\mathscr{A}))$ is an additive equivalence,  $\mathbb{G}: {\rm Mod}({\rm fp}(\mathscr{A})) \rightarrow T_{\mathscr{A}}{\rm Mod}$ is an additive equivalence which preserves and reflects finitely generated projective objects and $R_{\mathscr{A}}$ is Morita equivalent to $T_{\mathscr{A}}$. By using \cite[Corollary 3]{gs1}, it is not difficult to show that each $\theta_{i}R_{\mathscr{A}}\theta_{i}$ is a left perfect ring and hence $R_{\mathscr{A}}$ has the property $(i)$. Since $\mathscr{A}$ is a locally finitely presented pure semisimple Grothendieck category, by \cite[Theorem 6.3]{S77}, $\mathscr{A}$ is locally noetherian and every object in $\mathscr{A}$ is a coproduct of indecomposable noetherian objects. This implies that by \cite[Corollary 2.6]{S80}, ${\rm Mod}({\rm fp}(\mathscr{A}))$ is locally noetherian. By \cite[Proposition 4.3]{bs}, $R_{\mathscr{A}}$ is left locally noetherian. On the other hand, since $\mathscr{A}$ is locally finitely presented Grothendieck category and $R_{\mathscr{A}}$ is a left perfect ring, by \cite[Theorems 2.7 and 3.3]{gs}, ${\rm gl.dim}R_{\mathscr{A}}= 0~ {\rm or} ~2$ and ${\rm dom.dim}R_{\mathscr{A}}\theta_i \geq 2$ for each $i \in I$. Therefore $R_{\mathscr{A}}$ has the properties $(ii)$ and $(iv)$. Now we show that $R_{\mathscr{A}}$ has the property $(iii)$. Since ${\rm Mod}({\rm fp}(\mathscr{A}))$ is a QF-3 category, $R_{\mathscr{A}}$ is a left QF-3 ring and so there are finitely many pairwise non-isomorphic simple left ideals $S_1, \cdots, S_n$ such that $M = \bigoplus_{l=1}^n E(S_l)$ is faithful, projective, and injective in $R_{\mathscr{A}}$Mod. We show that ${\rm proj}(R_{\mathscr{A}})\bigcap {\rm inj}(R_{\mathscr{A}})= {\rm add}(\bigoplus_{l=1}^n E(S_l))$. Assume that $N \in {\rm proj}(R_{\mathscr{A}})\bigcap {\rm inj}(R_{\mathscr{A}})$. Then $N \cong \bigoplus_{i=1}^tR_{\mathscr{A}}\theta_i$. Since $M$ is faithful, for each $i,~(1 \leq i \leq t)$, there is a monomorphism $R_{\mathscr{A}}\theta_i \rightarrow \widehat{\prod}_{A} M$, where $A$ is a set. But since every representable functor has finitely generated essential socle, for each $i,~(1 \leq i \leq t)$, there is a monomorphism $R_{\mathscr{A}}\theta_i \rightarrow \bigoplus_{l \in B_i} E(S_l)$, where $B_i$ is a finite set. Since $N$ is injective in $R_{\mathscr{A}}$Mod and each ${\rm End}_{R_{\mathscr{A}}}(E(S_l))$ is a local ring, $N \in {\rm add}(\bigoplus_{l=1}^n E(S_l))$ and so $R_{\mathscr{A}}$ has the property $(iii)$. Therefore the result follows.
\end{proof}

\begin{Pro}\label{r52}
Let $R$ be a ring with the properties $(i)$-$(iv)$. Then ${\rm Flat}(R)$ is a locally finitely presented pure semisimple Grothendieck category which satisfies the property $(\ast)$.
\end{Pro}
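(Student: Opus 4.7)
The plan is to reduce to the functor-ring case by invoking Theorem \ref{re1}: since $R$ has properties $(i)$--$(iv)$, $R$ is Morita equivalent to the functor ring $R_\Lambda$ of some left pure semisimple ring $\Lambda$. Morita equivalence of rings with enough idempotents yields an equivalence between their categories of unitary left modules, which restricts to an equivalence of the full subcategories of flat modules; thus ${\rm Flat}(R)\simeq{\rm Flat}(R_\Lambda)$. Combining this with the Gabriel-type equivalence $\widehat{{\rm Hom}}_\Lambda(V,-)\colon \Lambda\text{-Mod}\to {\rm Flat}(R_\Lambda)$ recalled at the beginning of this section gives ${\rm Flat}(R)\simeq \Lambda\text{-Mod}$. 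Since $\Lambda$ is left pure semisimple, $\Lambda\text{-Mod}$ is a locally finitely presented pure semisimple Grothendieck category, and hence so is ${\rm Flat}(R)$, with finitely presented objects corresponding to ${\rm proj}(R)$.

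To verify property $(\ast)$, I would work on the $R$ side via the Yoneda--Gabriel equivalence ${\rm Mod}({\rm fp}({\rm Flat}(R)))={\rm Mod}({\rm proj}(R))\simeq R{\rm Mod}$, under which representable functors correspond to the finitely generated projective unitary modules $Re_\alpha$. Thus the task reduces to showing that $R$ is left QF-3 and that each $Re_\alpha$ has finitely generated essential socle. For the socle condition, since $R$ is left locally noetherian by $(ii)$ and $Re_\alpha$ is noetherian, its injective envelope $E(Re_\alpha)$ decomposes as a finite direct sum of indecomposable injectives; by $(iv)$ this envelope is projective, so by $(iii)$ it is a finite direct sum of $E(S_l)$'s. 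Consequently ${\rm Soc}(E(Re_\alpha))$ is finitely generated, and ${\rm Soc}(Re_\alpha)=Re_\alpha\cap {\rm Soc}(E(Re_\alpha))$ is a finitely generated essential submodule of $Re_\alpha$.

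For the QF-3 property, set $M=\bigoplus_{l=1}^n E(S_l)$; by $(iii)$ this module is finitely generated, projective, and injective in $R{\rm Mod}$. The main obstacle---and the key step---is verifying that $M$ is faithful. The approach: by the argument of the previous paragraph each $Re_\alpha$ embeds in a finite direct sum of $E(S_l)$'s, so $R=\bigoplus_\alpha Re_\alpha$ embeds in a direct sum of copies of the $E(S_l)$, forcing ${\rm ann}_R(M)=0$. Fuller's non-unital analog of the Colby--Rutter characterization of finitely generated minimal faithful modules (recalled in Section 4) then identifies $M$ as such a module, showing that $R{\rm Mod}$ is QF-3 and completing the verification of $(\ast)$.
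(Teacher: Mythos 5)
Your proof is correct, and it shares the paper's core reduction---invoking Theorem \ref{re1} to realize $R$, up to Morita equivalence, as the functor ring $R_{\Lambda}$ of a left pure semisimple ring $\Lambda$---but the verification of the remaining claims follows a genuinely different route. The paper obtains the Grothendieck property of ${\rm Flat}(R)$ from properties $(ii)$, $(iv)$ and \cite[Theorem 2.7]{gs}, and obtains pure semisimplicity from perfectness of ${\rm Mod}({\rm fp}(\mathscr{A}))$ via \cite[Theorem 1.9]{S78}; you get both at once from the composite equivalence ${\rm Flat}(R)\simeq{\rm Flat}(R_{\Lambda})\simeq\Lambda\text{-Mod}$, which is cleaner and equally valid. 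For property $(\ast)$ the paper routes the QF-3 claim through the proof of Proposition \ref{l7} (hence through the Morita-duality argument of \cite[Theorem 3.3]{f}), whereas you verify it directly from $(i)$--$(iv)$: the socle computation showing $E(Re_{\alpha})\in{\rm add}(\bigoplus_{l=1}^{n}E(S_l))$, faithfulness of $\bigoplus_{l=1}^{n}E(S_l)$ from the resulting embedding of $R$ into a direct sum of copies of the $E(S_l)$ (which kills the annihilator since $x=\sum_{\alpha}xe_{\alpha}$ for every $x\in R$), and Fuller's Colby--Rutter criterion. This direct route is self-contained and arguably preferable; the only details worth spelling out are that $E(Re_{\alpha})$ must be seen to be finitely generated before $(iii)$ can be applied (it is a finite direct sum of indecomposable projectives, each isomorphic to some $Re_{\beta}$ by $(i)$), and that the $S_l$ are isomorphic to simple left \emph{ideals} as Fuller's criterion requires (each $E(S_l)$ is a finitely generated indecomposable projective, hence isomorphic to some $Re_{\beta}$, so $S_l\cong{\rm Soc}(Re_{\beta})\subseteq R$). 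Your argument that the representable functors have finitely generated essential socle coincides in substance with the paper's appeal to \cite[Ex 18, p. 134]{bs}.
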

\begin{proof}
We know that ${\rm Flat}(R)$ is a locally finitely presented category and ${\rm fp}({\rm Flat}(R))$ coincides with the category of all finitely generated projective unitary left $R$-modules. By the proof of Theorem \ref{re1}, we can see that there exists a left pure semisimple ring $\Lambda$ such that $R_{\Lambda}$ is Morita equivalent to $R$. It implies that by the proof of Proposition \ref{l7}, $R$ is a left perfect right locally coherent QF-3 ring. Properties $(ii)$ and $(iv)$ and \cite[Theorem 2.7]{gs} implies that ${\rm Flat}(R)$ is a Grothendieck category. Therefore ${\rm Flat}(R)$ is a locally finitely presented Grothendieck category. Set $\mathscr{A}={\rm Flat}(R)$. Since ${\rm fp}(\mathscr{A})={\rm proj}(R)$, there is an additive equivalence ${\rm proj}(R) \rightarrow {\rm proj}(T_{\mathscr{A}})$. By \cite[Theorem 3.4]{zn}, $R$ is Morita equivalent to $T_{\mathscr{A}}$. But because $R$ is a left perfect left QF-3 ring and there is an additive equivalence ${\rm Mod}({\rm fp}(\mathscr{A})) \rightarrow T_{\mathscr{A}}{\rm Mod}$, ${\rm Mod}({\rm fp}(\mathscr{A}))$ is a  perfect QF-3 category. In particular, by \cite[Theorem 1.9]{S78}, $\mathscr{A}$ is a locally finitely presented pure semisimple Grothendieck category. Now we show that every representable functor in ${\rm Mod}({\rm fp}(\mathscr{A}))$ has finitely generated essential socle. Since $R$ is a left locally noetherian ring with the properties $(i)$ and $(iii)$, by using \cite[Ex 18, p. 134]{bs}, we can see that the injective hull of any finitely generated projective unitary left $R$-module is a finite direct sum of injective hull of simple modules. There is an additive equivalence ${\rm Mod}({\rm fp}(\mathscr{A})) \rightarrow R{\rm Mod}$ which preserves and reflect finitely generated projective objects and so by \cite[Ex 25(ii), p. 134]{bs}, every representable functor in ${\rm Mod}({\rm fp}(\mathscr{A}))$ has finitely generated essential socle. Therefore ${\rm Flat}(R)$ is a locally finitely presented pure semisimple Grothendieck category which satisfies the property $(\ast)$.
\end{proof}

We are now in a position to prove our main result in this section.

\begin{The}\label{r53}
There exists a bijection between Morita equivalence classes of rings with the properties $(i)$-$(iv)$ and equivalence classes of locally finitely presented pure semisimple Grothendieck categories which satisfy the property $(\ast)$.
\end{The}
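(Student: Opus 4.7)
The plan is to assemble the bijection from the two directional constructions given by Propositions \ref{r51} and \ref{r52}. First I would define the map $\Phi$ sending (the Morita equivalence class of) a ring $R$ with properties $(i)$-$(iv)$ to (the equivalence class of) the Grothendieck category ${\rm Flat}(R)$; Proposition \ref{r52} guarantees the target is a locally finitely presented pure semisimple Grothendieck category satisfying $(\ast)$. In the reverse direction, I would define $\Psi$ by sending (the equivalence class of) such a category $\mathscr{A}$ to (the Morita equivalence class of) its functor ring $R_{\mathscr{A}}$; Proposition \ref{r51} guarantees $R_{\mathscr{A}}$ has properties $(i)$-$(iv)$.

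Next I would check that both maps descend to equivalence classes. For $\Phi$, if $R$ and $R'$ are Morita equivalent rings with enough idempotents, the resulting additive equivalence $R{\rm Mod} \to R'{\rm Mod}$ commutes with direct limits and sends finitely generated projectives to finitely generated projectives, hence it restricts to an equivalence ${\rm Flat}(R) \simeq {\rm Flat}(R')$. For $\Psi$, an equivalence $\mathscr{A} \simeq \mathscr{A}'$ restricts to an additive equivalence between the full subcategories of finitely presented indecomposable objects; combined with \cite[Theorem 3.4]{zn}, this gives that $R_{\mathscr{A}}$ and $R_{\mathscr{A}'}$ are Morita equivalent.

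Then I would verify that $\Phi$ and $\Psi$ are mutually inverse. For $\Psi \circ \Phi$, start with a ring $R$ satisfying $(i)$-$(iv)$, set $\mathscr{A} = {\rm Flat}(R)$, and recall from the proof of Proposition \ref{r52} that ${\rm fp}(\mathscr{A}) = {\rm proj}(R)$ and that \cite[Theorem 3.4]{zn} yields $R$ Morita equivalent to $T_{\mathscr{A}}$; since every finitely presented object in $\mathscr{A}$ decomposes into a finite coproduct of indecomposables (via property $(i)$ and the perfectness of $R$), $T_{\mathscr{A}}$ is Morita equivalent to $R_{\mathscr{A}}$, giving $\Psi(\Phi(R)) = R$ up to Morita equivalence. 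For $\Phi \circ \Psi$, start with $\mathscr{A}$ and use the composed equivalence $\widehat{{\rm Hom}}_{\mathscr{A}}(V,-): \mathscr{A} \to {\rm Flat}(T_{\mathscr{A}})$ recalled at the beginning of the section, transported along the Morita equivalence between $T_{\mathscr{A}}$ and $R_{\mathscr{A}}$, to obtain $\mathscr{A} \simeq {\rm Flat}(R_{\mathscr{A}}) = \Phi(\Psi(\mathscr{A}))$.

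The bulk of the argument is bookkeeping: the substantive content lies in Propositions \ref{r51} and \ref{r52}. The main technical point to be careful about is the passage between the (unital) functor categories ${\rm Mod}({\rm fp}(\mathscr{A}))$, the (non-unital) functor ring $T_{\mathscr{A}}$ built from all finitely presented objects, and the functor ring $R_{\mathscr{A}}$ built only from a set of representatives of indecomposables. Keeping these three versions consistent, and invoking \cite[Theorem 3.4]{zn} to identify them up to Morita equivalence once indecomposable decompositions of finitely presented objects are available, is the only place where the correspondence between rings with enough idempotents and Grothendieck categories could fail to be tight.
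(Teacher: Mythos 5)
Your proposal is correct and follows essentially the same route as the paper: both rest entirely on Propositions \ref{r51} and \ref{r52}, the equivalence $\mathscr{A}\simeq{\rm Flat}(R_{\mathscr{A}})$, and \cite[Theorem 3.4]{zn} to recover Morita equivalence of the rings from an equivalence of the categories of finitely generated projectives. The only cosmetic difference is that you package the argument as a pair of mutually inverse maps, whereas the paper checks injectivity and surjectivity of the single assignment $R\mapsto{\rm Flat}(R)$; these are the same verifications in different order.
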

\begin{proof}
Let $R$ be a ring with the properties $(i)$-$(iv)$. By Proposition \ref{r52}, ${\rm Flat}(R)$ is a locally finitely presented pure semisimple Grothendieck category which satisfies the property $(\ast)$. Let $R$ and $T$ be rings with the properties $(i)$-$(iv)$. If $R$ and $T$ are Morita equivalent, then there exists an additive equivalence ${\rm Flat}(R) \rightarrow {\rm Flat}(T)$. It implies that the map $R \mapsto {\rm Flat}(R)$ sends Morita equivalence classes of rings with the properties $(i)$-$(iv)$ to equivalence classes of locally finitely presented pure semisimple Grothendieck categories which satisfy the property $(\ast)$. We show that this mapping is a bijection. In order to show that this map is injective we suppose that $R$ and $T$ are rings with the properties $(i)$-$(iv)$ and there exists an additive equivalence ${\rm Flat}(R) \rightarrow {\rm Flat}(T)$. Then there exists an additive equivalence ${\rm proj}(R) \rightarrow {\rm proj}(T)$. By \cite[Theorem 3.4]{zn}, $R$ is Morita equivalent to $T$ and so this mapping is injective. It remains to show that this mapping is a surjective. Let $\mathscr{A}$ be a locally finitely presented pure semisimple Grothendieck category which satisfies the property $(\ast)$. Then by Proposition \ref{r51}, $R_{\mathscr{A}}$ is a ring with the properties $(i)$-$(iv)$ and also there is an additive equivalence $\mathscr{A} \rightarrow {\rm Flat}(R_{\mathscr{A}})$. Therefore the mapping is surjective and the result follows.
\end{proof}

As a consequence of Theorems \ref{re1} and \ref{r53}, we have the following result.

\begin{Cor}\label{r54}
There exists a bijection between Morita equivalence classes of left pure semisimple rings $\Lambda$ and equivalence classes of locally finitely presented pure semisimple Grothendieck categories which satisfy the property $(\ast)$.
\end{Cor}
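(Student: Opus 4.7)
The plan is to compose the two bijections already established earlier in the paper. The first, from Theorem~\ref{re1}, is the map $[\Lambda] \mapsto [R_\Lambda]$ sending Morita equivalence classes of left pure semisimple rings to Morita equivalence classes of rings with the properties $(i)$--$(iv)$, where $R_\Lambda$ denotes the left functor (Gabriel) ring of $\Lambda$. The second, from Theorem~\ref{r53}, is the map $[R] \mapsto [{\rm Flat}(R)]$ sending Morita equivalence classes of such rings to equivalence classes of locally finitely presented pure semisimple Grothendieck categories satisfying $(\ast)$.

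Composing these, I would obtain the assignment $[\Lambda] \mapsto [{\rm Flat}(R_\Lambda)]$. Since a composition of two bijections is a bijection, this is immediately the desired correspondence, provided each constituent is well-defined on equivalence classes and each is itself bijective. Both of these facts are precisely the content of Theorems~\ref{re1} and~\ref{r53}, so there is nothing further to check.

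For transparency it is worth spelling out the inverse direction: given an equivalence class of a locally finitely presented pure semisimple Grothendieck category $\mathscr{A}$ satisfying $(\ast)$, one first passes to the Morita class of the functor ring $R_\mathscr{A}$ (which Proposition~\ref{r51} guarantees has properties $(i)$--$(iv)$), and then extracts the left pure semisimple ring $S = {\rm End}_{R_\mathscr{A}}(P)$ from Proposition~\ref{c4}, where $P = \bigoplus_{i=1}^n P_i$ is the direct sum of projective covers of a complete set of representatives of the non-isomorphic simple unitary left $R_\mathscr{A}$-modules provided by property $(iii)$. The bijectivity guarantees of Theorems~\ref{re1} and~\ref{r53} ensure that this recovers $\Lambda$ up to Morita equivalence.

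Since the proof is purely a formal splicing of two prior bijections, there is no genuine technical obstacle; all the substantive analytic work has already been carried out in establishing Theorems~\ref{re1} and~\ref{r53}, so the argument here is essentially a one-line composition.
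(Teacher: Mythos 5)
Your proposal is correct and matches the paper exactly: the paper derives Corollary \ref{r54} precisely as the composition of the bijections of Theorem \ref{re1} and Theorem \ref{r53}, stating it as an immediate consequence without further argument. Your additional unpacking of the inverse direction via Propositions \ref{r51} and \ref{c4} is consistent with the machinery the paper uses to prove those two theorems.
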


We recall from \cite[p. 202]{dg} that a Grothendieck category $\mathscr{C}$ is called {\it locally finite} if it has a generating set of objects of finite length. A Grothendieck category $\mathscr{C}$ is called {\it of finite representation type} if it is locally finite and has only finitely many non-isomorphic finitely generated indecomposable objects (see \cite[p. 216]{dg}). Note that every locally finitely presented Grothendieck category of finite representation type is pure semisimple (see \cite[Proposition 3.3 and Lemma 4.1]{dg}).

\begin{Lem}\label{p1}
Let $\mathscr{A}$ be a locally finitely presented Grothendieck category of finite representation type. Then $\mathscr{A}$ has the property $(\ast)$.
\end{Lem}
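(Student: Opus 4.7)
The plan is to identify ${\rm Mod}({\rm fp}(\mathscr{A}))$ with a module category $\Lambda\text{-Mod}$ for an Auslander-type ring $\Lambda$ and then verify $(\ast)$ by classical arguments. Since $\mathscr{A}$ is of finite representation type it is pure semisimple, so by \cite[Theorem 1.9]{S78} every finitely presented object decomposes as a finite coproduct of indecomposables; the hypothesis furnishes a complete list $V_1,\ldots,V_n$ of these indecomposables. Set $V=\bigoplus_{i=1}^n V_i$ and $\Lambda=T_{\mathscr{A}}={\rm End}_{\mathscr{A}}(V)$, which is a ring with identity since $V$ is a single object. The equivalence $\mathbb{G}$ recalled at the start of this section then identifies ${\rm Mod}({\rm fp}(\mathscr{A}))$ with $\Lambda\text{-Mod}$, and under this identification each representable functor $\mathbb{H}(V_i)={\rm Hom}_{\mathscr{A}}(-,V_i)|_{{\rm fp}(\mathscr{A})}$ corresponds to an indecomposable finitely generated projective left $\Lambda$-module $\Lambda e_i$, where $e_i$ is the canonical idempotent associated to $V_i$. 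Property $(\ast)$ thereby reduces to (I) $\Lambda$ is left QF-3, and (II) each $\Lambda e_i$ has finitely generated essential socle as a left $\Lambda$-module.

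For (II) I would first analyze the structure of $\Lambda$. Local finiteness of $\mathscr{A}$ makes each $V_i$ of finite length, so ${\rm End}_{\mathscr{A}}(V_i)$ is local and $\{e_1,\ldots,e_n\}$ is a complete set of primitive orthogonal idempotents of $\Lambda$, making $\Lambda$ basic and semiperfect. The Harada--Sai lemma, applicable because there are only finitely many indecomposables of bounded length in the Krull--Schmidt category ${\rm fp}(\mathscr{A})$, forces $J(\Lambda)$ to be nilpotent. Since pure semisimplicity makes every simple object of $\mathscr{A}$ finitely presented (hence isomorphic to some $V_j$), only finitely many isomorphism classes of simples appear in $\mathscr{A}$. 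A composition series of each $V_i$ in $\mathscr{A}$ lifts via ${\rm Hom}_{\mathscr{A}}(-,V)$ to a $\Lambda$-module filtration of $\Lambda e_i$ whose successive quotients embed into spaces ${\rm Hom}_{\mathscr{A}}(S,V)$ for simple $S$ of $\mathscr{A}$; such spaces are annihilated by $J(\Lambda)$ and hence are semisimple $\Lambda$-modules of finite length. Consequently each $\Lambda e_i$ has finite length as a $\Lambda$-module, which gives (II) at once.

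For (I), pure semisimplicity yields that every indecomposable injective of $\mathscr{A}$, being a coproduct of indecomposable finitely presented objects and itself indecomposable, is actually an indecomposable finitely presented object, and is therefore isomorphic to some $V_{i_k}$; let $V_{i_1},\ldots,V_{i_m}$ list these. Their direct sum is an injective cogenerator of $\mathscr{A}$, and each $(-,V_{i_k})|_{{\rm fp}(\mathscr{A})}$ is projective by Yoneda and injective in ${\rm Mod}({\rm fp}(\mathscr{A}))$, since the Yoneda embedding $\mathbb{H}$ of a locally finitely presented Grothendieck category preserves injectivity. Thus $\bigoplus\Lambda e_{i_k}$ is a finitely generated projective-injective left $\Lambda$-module, and it is faithful since $\bigoplus V_{i_k}$ is a cogenerator of $\mathscr{A}$. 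Applying Fuller's extension \cite[Proposition 3.2]{f} of the Colby--Rutter characterization, $\Lambda\text{-Mod}$ is QF-3, proving (I). The main obstacle lies in carrying out the Harada--Sai plus composition series argument rigorously enough to conclude finite $\Lambda$-length of each $\Lambda e_i$; this hinges on both the nilpotency of $J(\Lambda)$ and the finiteness of composition factors, which reflect the pure semisimplicity of $\mathscr{A}$ and the boundedness of composition lengths among the $V_i$.
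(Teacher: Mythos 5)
Your reduction of $(\ast)$ to the two statements (I) and (II) about $\Lambda={\rm End}_{\mathscr{A}}(V)$ is sound, and your route to (I) --- extracting a finitely generated faithful projective--injective left $\Lambda$-module from the (finitely many) indecomposable injectives of $\mathscr{A}$ and invoking Fuller's extension of the Colby--Rutter criterion --- is a reasonable alternative to the paper, which instead gets QF-3 from the functor ring being left artinian with dominant dimension at least $2$. The problem is your proof of (II). The filtration of $\Lambda e_i\cong{\rm Hom}_{\mathscr{A}}(V,V_i)$ induced by a composition series of $V_i$ has successive subquotients embedding into ${\rm Hom}_{\mathscr{A}}(V,S)$ with $S$ simple (note the variance: it is the covariant Hom that produces submodules of $\Lambda e_i$, not ${\rm Hom}_{\mathscr{A}}(S,V)$), and you assert that these are annihilated by $J(\Lambda)$ and hence semisimple of finite length. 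This is false. Since $\mathscr{A}$ is pure semisimple, $S$ is finitely presented indecomposable, so $S\cong V_j$ for some $j$ and ${\rm Hom}_{\mathscr{A}}(V,S)\cong \Lambda e_j$: an indecomposable projective with simple top, which is semisimple only when it is already simple. Concretely, for $\mathscr{A}$ the category of $k[x]/(x^2)$-modules, $\Lambda$ is the Auslander algebra and ${\rm Hom}_{\mathscr{A}}(V,k)$ is the two-dimensional non-semisimple indecomposable projective (the identity of the summand $k$ generates it, and precomposing with the quotient map $k[x]/(x^2)\rightarrow k$, which lies in $J(\Lambda)$, produces a nonzero element of its radical). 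So your filtration argument only reduces the finite length of $\Lambda e_i$ to the finite length of the $\Lambda e_j$ indexed by the simple objects of $\mathscr{A}$, and for those it is circular.

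What is missing is exactly the nontrivial input the paper takes from \cite[Corollary 3]{gs1} (and, in the companion Lemma 5.4, from \cite{dg}): that the functor ring of a locally finite category of finite representation type is left artinian. Harada--Sai only yields that $\Lambda$ is semiprimary ($J(\Lambda)$ nilpotent, $\Lambda/J(\Lambda)$ a finite product of division rings), and a semiprimary ring need not be artinian; one still has to bound the length of each $J(\Lambda)^ke_i/J(\Lambda)^{k+1}e_i$. Nor does the observation that each ${\rm Hom}_{\mathscr{A}}(V_k,S)$ has finite length over the division ring ${\rm End}_{\mathscr{A}}(S)$ settle this, because a $\Lambda$-submodule of ${\rm Hom}_{\mathscr{A}}(V,S)$ need not be an ${\rm End}_{\mathscr{A}}(S)$-submodule, so the ${\rm End}_{\mathscr{A}}(S)$-length does not a priori control the $\Lambda$-length. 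You should either cite the Garcia--Simson/Dung--Garcia result, as the paper does, or supply a genuine proof of the left artinian property; once that is in place, finite length of each $\Lambda e_i$ does give (II) at once, as you say.
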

\begin{proof}
By using \cite[Corollary 3]{gs1}, \cite[Theorem 2.7]{gs} and the existence of an additive equivalence $\mathscr{A} \rightarrow {\rm Flat}(R_{\mathscr{A}})$, the functor ring $R_{\mathscr{A}}$ of $\mathscr{A}$ is a left artinian ring with ${\rm dom.dim}R_{\mathscr{A}} \geq 2$ and ${\rm l.gl.dim}R_{\mathscr{A}}= 0 ~{\rm or}~ 2$ and so $R_{\mathscr{A}}$ is a QF-3 ring. Since $R_{\mathscr{A}}$ is left artinian, every finitely generated projective left $R_{\mathscr{A}}$-module has finitely generated essential socle. The existence of an additive equivalence ${\rm Mod}({\rm fp}(\mathscr{A})) \rightarrow R_{\mathscr{A}}$Mod which preserves and reflects finitely generated projective objects implies that ${\rm Mod}({\rm fp}(\mathscr{A}))$ is a QF-3 category and every representable functor has finitely generated essential socle.
\end{proof}

\begin{Lem}\label{p3}
Let $R$ be a left artinian ring with ${\rm l.gl.dim}R = 0~{\rm or}~ 2$ and ${\rm l.dom.dim}{R} \geq 2$. Then the category ${\rm Flat}(R)$ is a locally finitely presented Grothendieck category of finite representation type.
\end{Lem}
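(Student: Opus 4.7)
The plan is to reduce the statement, via the Auslander correspondence proved earlier, to the well-known fact that the module category over a representation-finite ring is locally finitely presented of finite representation type. By hypothesis $R$ is left artinian with ${\rm l.gl.dim}\,R = 0$ or $2$ and ${\rm l.dom.dim}\,R \geq 2$, so Corollary \ref{re2} provides a ring $\Lambda$ of finite representation type such that $R$ is Morita equivalent to the left functor ring $R_\Lambda$. The work then amounts to transporting the finite representation type of $\Lambda$-${\rm Mod}$ along the canonical chain of equivalences ${\rm Flat}(R) \simeq {\rm Flat}(R_\Lambda) \simeq \Lambda\text{-}{\rm Mod}$.

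First I would set up these equivalences. A Morita equivalence between rings with enough idempotents induces an equivalence between their categories of flat unitary left modules (this was used implicitly throughout Sections 3 and 4 and can be extracted from \cite[Theorem 3.4]{zn}), giving ${\rm Flat}(R) \simeq {\rm Flat}(R_\Lambda)$. Next, since $\Lambda$ has finite representation type it is in particular left pure semisimple, so letting $V$ denote a direct sum of a complete set of representatives of isomorphism classes of finitely generated indecomposable left $\Lambda$-modules and invoking the general theory recalled at the beginning of Section 5, the functor $\widehat{{\rm Hom}}_\Lambda(V,-)\colon \Lambda\text{-}{\rm Mod}\to {\rm Flat}(R_\Lambda)$ is an additive equivalence. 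Composing, we obtain ${\rm Flat}(R)\simeq \Lambda\text{-}{\rm Mod}$.

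It remains to observe that $\Lambda\text{-}{\rm Mod}$ is a locally finitely presented Grothendieck category of finite representation type in the sense recalled just before Lemma \ref{p1}. Since $\Lambda$ is artinian, every finitely generated left $\Lambda$-module has finite length, so the finitely generated modules form a generating set of objects of finite length and $\Lambda\text{-}{\rm Mod}$ is locally finite. By the very definition of finite representation type for $\Lambda$, there are only finitely many isomorphism classes of finitely generated indecomposable left $\Lambda$-modules. Transporting along the equivalence ${\rm Flat}(R)\simeq \Lambda\text{-}{\rm Mod}$, the conclusion follows.

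The argument is in essence a dictionary translation through the bijections already established in Sections 3 and 5, so there is no serious analytic obstacle; the main point requiring care is that the equivalence ${\rm Flat}(R_\Lambda)\simeq \Lambda\text{-}{\rm Mod}$ matches finitely presented objects on one side with finitely generated $\Lambda$-modules on the other, which is precisely what ensures that the categorical notion of finite representation type for ${\rm Flat}(R)$ coincides with the ring-theoretic notion for $\Lambda$. Once this matching is in place (and it is guaranteed by the restriction of $\widehat{{\rm Hom}}_\Lambda(V,-)$ to an equivalence ${\rm fp}(\Lambda\text{-}{\rm Mod})\simeq {\rm proj}(R_\Lambda)$), the result is immediate.
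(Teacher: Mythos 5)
Your argument is correct, but it takes a genuinely different route from the paper's. You reduce the statement to the module category of a representation-finite ring: Corollary \ref{re2} supplies $\Lambda$ of finite representation type with $R$ Morita equivalent to $R_\Lambda$, and you transport finite representation type along ${\rm Flat}(R)\simeq{\rm Flat}(R_\Lambda)\simeq\Lambda\text{-}{\rm Mod}$, using that the equivalence $\widehat{{\rm Hom}}_\Lambda(V,-)$ matches finitely presented objects with finitely generated projectives. The paper instead argues intrinsically inside ${\rm Flat}(R)$: it first checks that $R$ has properties $(i)$--$(iv)$ so that Proposition \ref{r52} makes ${\rm Flat}(R)$ a locally finitely presented pure semisimple Grothendieck category, then invokes Simson's theorem \cite[Theorem 6.3]{S77} to see that every indecomposable object is finitely generated, uses the left artinian hypothesis to bound the number of indecomposables (they correspond to the finitely many $Re_\alpha$), and finally cites \cite[Theorems 3.2 and 4.2]{dg} for local finiteness. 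Your version is conceptually cleaner --- a pure dictionary translation through the correspondences of Sections~3 and~5 --- and it avoids the appeal to the Simson and Dung--Garc\'{\i}a results; what it costs is self-containedness, since the surjectivity half of Corollary \ref{re2} (which is exactly the verification that such an $R$ has properties $(i)$--$(iv)$ and hence lies in the image of $\Lambda\mapsto R_\Lambda$) is stated in the paper without proof, so you have in effect outsourced the same verification the paper performs explicitly. There is no circularity, as Corollary \ref{re2} precedes and does not depend on Lemma \ref{p3}, so your proof stands as written.
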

\begin{proof}
Since $R$ has the properties $(i)$-$(iv)$, by Proposition \ref{r52}, $\mathscr{A}={\rm Flat}(R)$ is a locally finitely presented pure semisimple Grothendieck category. It implies that by \cite[Theorem 6.3]{S77}, every indecomposable object in $\mathscr{A}$ is finitely generated. On the other hand, since $R$ is a left artinian ring, $\mathscr{A}$ has only finitely many non-isomorphic indecomposable objects. In particular, by \cite[Theorems 3.2 and 4.2]{dg}, $\mathscr{A}$ is locally finite. Therefore $\mathscr{A}$ is a locally finitely presented Grothendieck category of finite representation type.
\end{proof}

\begin{Cor}{\rm (See \cite[Theorem 4.4]{Ausla2})}\label{r56}
There exists a bijection between Morita equivalence classes of rings of finite representation type and equivalence classes of locally finitely presented Grothendieck categories of finite representation type.
\end{Cor}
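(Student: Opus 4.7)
The plan is to derive Corollary \ref{r56} by restricting the bijection of Corollary \ref{r54} to an appropriate sub-class, using Lemmas \ref{p1} and \ref{p3} as the bridges between ``pure semisimple'' and ``finite representation type''. The ingredient from outside this paper that I would invoke is the Fuller--Reiten theorem (stated in the introduction) that any ring of finite representation type is left (and right) pure semisimple, so it lives inside the class indexing Corollary \ref{r54}.

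First I would handle the forward direction. Let $\Lambda$ be a ring of finite representation type. By Fuller--Reiten, $\Lambda$ is left pure semisimple, so Corollary \ref{r54} attaches to it (up to equivalence) the locally finitely presented pure semisimple Grothendieck category $\mathscr{A} := {\rm Flat}(R_{\Lambda}) \simeq \Lambda\text{-Mod}$, which automatically satisfies $(\ast)$. It remains to show $\mathscr{A}$ is of finite representation type. The cleanest route is to observe that by Corollary \ref{re2} the functor ring $R_{\Lambda}$ is left artinian with $\mathrm{l.gl.dim}\, R_{\Lambda} = 0$ or $2$ and $\mathrm{l.dom.dim}\, R_{\Lambda}\geq 2$; Lemma \ref{p3} then says that ${\rm Flat}(R_{\Lambda})$ is a locally finitely presented Grothendieck category of finite representation type. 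One also checks that Morita equivalent rings of finite representation type produce equivalent categories, so the assignment is well defined at the level of equivalence classes.

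For the reverse direction, let $\mathscr{A}$ be a locally finitely presented Grothendieck category of finite representation type. By Lemma \ref{p1} it satisfies $(\ast)$, and by the remark at the start of Section~5 it is pure semisimple. Corollary \ref{r54} then produces a unique (up to Morita equivalence) left pure semisimple ring $\Lambda$ with $\mathscr{A} \simeq \Lambda\text{-Mod}$. The equivalence identifies ${\rm fp}(\mathscr{A})$ with the category of finitely generated left $\Lambda$-modules and preserves indecomposability, so the hypothesis that $\mathscr{A}$ has only finitely many non-isomorphic finitely presented indecomposables forces $\Lambda$ to have only finitely many non-isomorphic finitely generated indecomposable modules; since every left pure semisimple ring is left artinian, $\Lambda$ is of finite representation type. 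Together with the forward direction, this shows that the bijection of Corollary \ref{r54} restricts to the asserted bijection.

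The main obstacle, and the only point requiring any care, is to track through Corollary \ref{r54} and verify that under its bijection the condition ``only finitely many non-isomorphic finitely presented indecomposables'' on the categorical side really corresponds to ``only finitely many non-isomorphic finitely generated indecomposables'' on the ring side. This is where Lemmas \ref{p1} and \ref{p3} do the decisive work: Lemma \ref{p1} promotes finite representation type to property $(\ast)$, placing $\mathscr{A}$ inside the scope of Corollary \ref{r54}, while Lemma \ref{p3} provides the converse passage from the Auslander-type ring of Corollary \ref{re2} back to a category of finite representation type, ensuring the restriction of the bijection is onto the correct sub-class on both sides.
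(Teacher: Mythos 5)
Your proposal is correct and follows essentially the same route as the paper, whose proof is a one-line citation of Corollary \ref{re2}, Theorem \ref{r53}, and Lemmas \ref{p1} and \ref{p3}; you simply package Theorem \ref{re1} and Theorem \ref{r53} through Corollary \ref{r54} and invoke Fuller--Reiten to place rings of finite representation type among the left pure semisimple ones, with Lemmas \ref{p1} and \ref{p3} doing the same bridging work in both arguments.
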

\begin{proof}
It follows from Corollary \ref{re2}, Theorem \ref{r53},  Lemmas \ref{p1} and \ref{p3} and the proof of Lemma \ref{p1}.
\end{proof}

As a consequence of Corollaries \ref{r54} and \ref{r56}, we have the following result.

\begin{Cor}\label{p2}
The following statement are equivalent.
\begin{itemize}
\item[$(a)$] The pure semisimplicity conjecture is true;
\item[$(b)$] Every locally finitely presented pure semisimple Grothendieck category which satisfies the property $(\ast)$ is of finite representation type.
\end{itemize}
\end{Cor}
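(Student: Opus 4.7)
The strategy is to show that this biconditional is just a translation between the ring-theoretic and categorical sides of the two correspondences already established, namely Corollary~\ref{r54} and Corollary~\ref{r56}. Concretely, both bijections are built from the same functor-ring/flat-functor machinery, and a left pure semisimple ring $\Lambda$ is of finite representation type if and only if its associated Grothendieck category (which is $\Lambda$-\textrm{Mod}, and equivalently ${\rm Flat}(R_{\Lambda})$) is of finite representation type as a Grothendieck category. With this compatibility in hand, the pure semisimplicity conjecture becomes the statement that every object in the larger class of the bijection in Corollary~\ref{r54} already lies in the smaller class of the bijection in Corollary~\ref{r56}, which is exactly condition~$(b)$.

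For $(a)\Rightarrow (b)$: start from a locally finitely presented pure semisimple Grothendieck category $\mathscr{A}$ satisfying $(\ast)$. By Corollary~\ref{r54} there is a left pure semisimple ring $\Lambda$ whose image under the bijection is $\mathscr{A}$, i.e.\ $\mathscr{A}\simeq {\rm Flat}(R_{\Lambda})\simeq \Lambda$-\textrm{Mod}. Assuming $(a)$, $\Lambda$ is of finite representation type, hence in particular left artinian with only finitely many finitely generated indecomposable left modules up to isomorphism. Therefore $\Lambda$-\textrm{Mod} is locally finite and has only finitely many isoclasses of finitely generated indecomposable objects, so $\mathscr{A}$ is of finite representation type as a Grothendieck category.

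For $(b)\Rightarrow (a)$: start from a left pure semisimple ring $\Lambda$. By Corollary~\ref{cc1}, $R_{\Lambda}$ has properties $(i)$--$(iv)$, so by Proposition~\ref{r52} the category $\mathscr{A}:={\rm Flat}(R_{\Lambda})\simeq \Lambda$-\textrm{Mod} is a locally finitely presented pure semisimple Grothendieck category satisfying $(\ast)$. By $(b)$, $\mathscr{A}$ is of finite representation type, so the set of isoclasses of finitely generated indecomposable objects in $\Lambda$-\textrm{Mod} is finite; moreover local finiteness of $\mathscr{A}$ forces $\Lambda$ to be left artinian. Hence $\Lambda$ is of finite representation type, as required.

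The only subtle point to articulate carefully will be the compatibility step: that under Corollary~\ref{r54} the class of rings of finite representation type corresponds precisely to the class of locally finitely presented Grothendieck categories of finite representation type (a class which, by the remark preceding Lemma~\ref{p1} and by Lemma~\ref{p1} itself, automatically satisfies $(\ast)$ and pure semisimplicity). This is essentially a restriction of the bijection in Corollary~\ref{r54} to the subclasses appearing in Corollary~\ref{r56}, and it is the bridge that makes both implications above into one-line translations rather than independent arguments. I do not expect any serious obstacle beyond spelling out this restriction carefully.
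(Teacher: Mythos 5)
Your proposal is correct and follows essentially the same route as the paper, which derives Corollary \ref{p2} directly as a consequence of Corollaries \ref{r54} and \ref{r56}; your "compatibility step" is exactly the content of Lemmas \ref{p1} and \ref{p3} together with the remark that categories of finite representation type are pure semisimple. The expanded two-directional argument you give is just an unpacking of the paper's one-line deduction.
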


We recall that a quiver is a set of points connected together by some directed arrows (see \cite{gab1, gab2}). Let $K$ be a field and $Q$ be a quiver. A {\it representation $M$ of $Q$ over $K$} is given by attaching a $K$-vector space $M_v$ to each point $v$ of $Q$ and a $K$-linear map $M_u \rightarrow M_v$ to each arrow $\alpha: u \rightarrow v$ of $Q$. A morphism $\tau: M \rightarrow N$ between two representations consists of $K$-linear maps $\tau_v: M_v \rightarrow N_v$ for each point $v$ of $Q$ such that the obvious diagrams commute. We denote by ${\rm Rep}_K(Q)$ the category of all representations of $Q$ over $K$. Note that the category ${\rm Rep}_K(Q)$ is a Grothendieck category. \\

The category ${\rm Rep}_K(Q)$ of representations of a quiver $Q$ of pure semisimple type over a field $K$ which has been studied by Drozdowski and Simson \cite{ds} is one of the important classes of locally finitely presented pure semisimple Grothendieck categories. In the following example we show that this class of locally finitely presented pure semisimple Grothendieck categories satisfies the condition $(b)$ of Corollary \ref{p2}. \\

\begin{Examp}{\rm
Let $Q$ be a connected quiver and $K$ be a field. Then by \cite[Theorem 1]{ds},  $\mathscr{A}={\rm Rep}_K(Q)$ is a locally finitely presented pure semisimple Grothendieck category if and only if $Q$ is a subquiver of one of the following quivers:
\small{
\[ \xymatrix{^{\infty}{\mathbb{A}_n}^{\infty}:~
 \cdots   &\ar[l] {-1}  & 0 \ar[l]  \ar[r] &1 \ar@{-}[r] & \cdots  \ar@{-}[r]& n-1 &  n \ar[l] \ar[r] & n+1 \ar[r]& \cdots,~ n \geq 0, }\]}
 \small{
\[ \xymatrix{
& 1'' \ar@{-}[d] \\
{\mathbb{D}_n}^{\infty}:~ 1' \ar@{-}[r] & 0 \ar@{-}[r] & 1 \ar@{-}[r] & \cdots  \ar@{-}[r] & n-1 &  n \ar[l] \ar[r] & n+1 \ar[r]& \cdots, ~n \geq 0,}\]}
\small{
\[ \xymatrix{
& 1'' \ar@{-}[d] \\
\mathbb{E}_8:~ 2' \ar@{-}[r] & 1' \ar@{-}[r] & 0 \ar@{-}[r] & 1 \ar@{-}[r] & 2 \ar@{-}[r] & 3 \ar@{-}[r] & 4,}\]}
where \xymatrix{ i \ar@{-}[r] & j} means \xymatrix{ i \ar[r] & j} or \xymatrix{ i & \ar[l] j}. Let $Q$ be a subquiver of one of the quivers $^{\infty}{\mathbb{A}_n}^{\infty}$ and ${\mathbb{D}_n}^{\infty}$. Assume that ${\rm Mod}({\rm fp}(\mathscr{A}))$ is a QF-3 category. Then there exists a finitely generated projective injective faithful object in ${\rm Mod}({\rm fp}(\mathscr{A}))$. Let $M$ be a finitely generated projective injective faithful object in ${\rm Mod}({\rm fp}(\mathscr{A}))$. Then $M \cong (-, X)|_{{\rm fp}(\mathscr{A})}$ for some $X \in {\rm fp}(\mathscr{A})$. It is not difficult to see that $X$ is injective in $\mathscr{A}$ and for any non-zero morphism $f: A_1 \rightarrow A_2$, there exists a non-zero morphism $h: A_2 \rightarrow X$ such that $fh \neq 0$. By using \cite[Corollary 1 and Theorem 4]{ds}, we can see that the underlying graph of the quiver $Q$ is one of the Dynkin diagrams $\mathbb{A}_n$ or $\mathbb{D}_n$. Therefore the locally finitely presented pure semisimple Grothendieck category $\mathscr{A}$ satisfies the condition $(b)$ of Corollary \ref{p2}.}
\end{Examp}

In \cite{S80}, Simson studied purity for locally finitely presented hereditary Grothendieck categories. He showed that there exists a bijection between equivalence classes of locally finitely presented pure semisimple hereditary Grothendieck categories and equivalence classes of pure semisimple hereditary functor categories (see \cite[Corollary 2.9]{S80}). The proof of Theorems \ref{re1} and \ref{r53} shows that we have the following result.

\begin{Cor}\label{r64}
There exists a bijection between Morita equivalence classes of left pure semisimple left hereditary rings $\Lambda$ and equivalence classes of locally finitely presented pure semisimple hereditary Grothendieck categories which satisfy the property $(\ast)$.
\end{Cor}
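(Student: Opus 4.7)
The plan is to derive Corollary \ref{r64} directly from Corollary \ref{r54} by observing that the bijection established there restricts to hereditary objects on both sides. The key point is to identify the category associated to a left pure semisimple ring $\Lambda$ with $\Lambda$-Mod, up to equivalence of abelian categories, so that the condition that $\Lambda$ be left hereditary translates into the condition that the global dimension of the associated Grothendieck category is at most one.

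First I would trace through the proofs of Theorems \ref{re1} and \ref{r53} to identify the bijection explicitly. In the forward direction, $\Lambda$ is sent to ${\rm Flat}(R_\Lambda)$, and the Gabriel-style equivalence $\widehat{{\rm Hom}}_\Lambda(V,-)$ from $\Lambda$-Mod to ${\rm Flat}(R_\Lambda)$, with $V$ the coproduct of representatives of the non-isomorphic finitely generated indecomposable left $\Lambda$-modules, is an equivalence of abelian categories. In the reverse direction, a locally finitely presented pure semisimple Grothendieck category $\mathscr{A}$ satisfying $(\ast)$ is assigned the ring $\Lambda = {\rm End}_{R_\mathscr{A}}(P)$, where $P$ is the direct sum of projective covers of the simple unitary left $R_\mathscr{A}$-modules. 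Proposition \ref{c4} provides an equivalence between $\Lambda$-Mod and ${\rm Proj}(R_\mathscr{A})$, and pure semisimplicity of $\mathscr{A}$ implies ${\rm Flat}(R_\mathscr{A}) = {\rm Proj}(R_\mathscr{A})$ (since every object of $\mathscr{A}$ is a coproduct of finitely presented ones, every flat unitary $R_\mathscr{A}$-module is projective). Hence $\mathscr{A}$ is equivalent to $\Lambda$-Mod via the composition of these equivalences.

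With this identification, the hereditary condition transfers across the bijection. Any equivalence of abelian categories preserves short exact sequences and projective objects, and hence preserves global dimension. Therefore $\Lambda$ is left hereditary if and only if $\Lambda$-Mod has global dimension at most one, if and only if $\mathscr{A}$ is a hereditary Grothendieck category. Combining this with Corollary \ref{r54} yields the desired bijection at the hereditary level. The main subtlety I anticipate is verifying that the equivalences appearing in the previous theorems are genuine equivalences of abelian categories rather than merely additive ones; this should follow from the exactness of the Gabriel-type Hom-functor $\widehat{{\rm Hom}}_\Lambda(V,-)$ and of the functors used in Proposition \ref{c4}, together with the preservation of direct limits in our locally finitely presented setting.
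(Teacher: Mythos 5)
Your proposal is correct and matches the paper's (very terse) argument: the paper simply notes that the proofs of Theorems \ref{re1} and \ref{r53} yield the hereditary refinement, which amounts exactly to your observation that the bijection of Corollary \ref{r54} identifies the category attached to $\Lambda$ with $\Lambda$-Mod up to equivalence, so the hereditary property transfers across both sides. The only caveat is minor: the transfer of hereditariness follows most cleanly from the fact that any additive equivalence of abelian categories is automatically exact (kernels and cokernels being categorical), rather than from left-exactness of $\widehat{{\rm Hom}}_\Lambda(V,-)$ viewed as a functor into $R_\Lambda${\rm Mod}.
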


Herzog in \cite{her} showed that, to prove the pure semisimplicity conjecture, it suffices to prove it for hereditary rings. As a consequence of Corollary \ref{p2} and \cite[Theorem 6.9]{her}, we have the following result.

\begin{Cor}\label{p22}
The following statement are equivalent.
\begin{itemize}
\item[$(a)$] The pure semisimplicity conjecture is true;
\item[$(b)$] Every locally finitely presented pure semisimple hereditary Grothendieck category which satisfies the property $(\ast)$ is of finite representation type.
\end{itemize}
\end{Cor}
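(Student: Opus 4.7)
The plan is to assemble the equivalence from three ingredients already in hand: the bijection in Corollary \ref{r64} between Morita equivalence classes of left pure semisimple left hereditary rings and equivalence classes of locally finitely presented pure semisimple hereditary Grothendieck categories satisfying $(\ast)$; the matching of finite representation type on both sides provided by Corollary \ref{r56}; and Herzog's reduction of the pure semisimplicity conjecture to the hereditary case \cite[Theorem 6.9]{her}. No new computation is needed, only a careful translation along these correspondences.

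For the direction $(a) \Rightarrow (b)$, I would start with a locally finitely presented pure semisimple hereditary Grothendieck category $\mathscr{A}$ satisfying $(\ast)$. By Corollary \ref{r64}, there exists a left pure semisimple left hereditary ring $\Lambda$ with an equivalence $\mathscr{A} \simeq \Lambda\text{-Mod}$. Invoking $(a)$, the ring $\Lambda$ is of finite representation type. Then by Corollary \ref{r56}, the category $\Lambda\text{-Mod}$ (and hence $\mathscr{A}$) is a locally finitely presented Grothendieck category of finite representation type, which is what $(b)$ asserts.

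For the direction $(b) \Rightarrow (a)$, I would use Herzog's theorem, which says that if the pure semisimplicity conjecture fails there exists a hereditary counterexample. So it suffices to check the conjecture on left pure semisimple left hereditary rings. Given such a ring $\Lambda$, Corollary \ref{r64} supplies $\Lambda\text{-Mod}$ as a locally finitely presented pure semisimple hereditary Grothendieck category satisfying property $(\ast)$. Applying $(b)$ yields that $\Lambda\text{-Mod}$ is of finite representation type, and then Corollary \ref{r56} (used in the reverse direction) forces $\Lambda$ itself to be of finite representation type, establishing $(a)$.

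The argument is structurally routine once Corollaries \ref{r64}, \ref{r56}, and Herzog's theorem are in place, so there is no real technical obstacle; the only thing worth double-checking is that the two notions of finite representation type (for the ring $\Lambda$ and for the locally finitely presented Grothendieck category $\Lambda\text{-Mod}$) really are identified under the bijection, which is exactly the content encoded in Corollary \ref{r56} together with its proof (built on Corollary \ref{re2}, Lemma \ref{p1} and Lemma \ref{p3}). Once this compatibility is flagged, both implications follow by direct composition of the correspondences.
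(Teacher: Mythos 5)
Your proof is correct and follows essentially the same route as the paper: the paper deduces this corollary from Corollary \ref{p2} (itself obtained from Corollaries \ref{r54} and \ref{r56}) together with Herzog's theorem, and you have simply unwound that argument, invoking the hereditary variant Corollary \ref{r64} directly in place of passing through Corollary \ref{p2}. The compatibility of the two notions of finite representation type that you flag at the end is indeed exactly what Corollary \ref{r56} (via Corollary \ref{re2} and Lemmas \ref{p1} and \ref{p3}) supplies, so no further argument is needed.
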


\section*{acknowledgements}
The research of the first author was in part supported by a grant from IPM. Also, the research of the second author was in part supported by a grant from IPM (No. 14020416). The work of the second author is
based upon research funded by Iran National Science Foundation (INSF) under project No. 4001480.

\end{document}